\newtheorem{thm}{Theorem}[section]
\newtheorem*{thm*}{Theorem}
\newtheorem{claim}[thm]{Claim}
\newtheorem{lem}[thm]{Lemma}
\newtheorem*{lem*}{Lemma}
\newtheorem{mainthm}{Theorem}
\newtheorem*{mainthm*}{Theorem}
\newtheorem{maincor}[mainthm]{Corollary}
\newtheorem{prop}[thm]{Proposition}
\theoremstyle{definition}
\newtheorem*{case*}{Case}
\newtheorem{conj}[thm]{Conjecture}
\newtheorem{defn}[thm]{Definition}
\newtheorem*{defn*}{Definition}
\newtheorem*{exmp*}{Example}
\renewcommand{\thestep}{}
\theoremstyle{remark}
\newtheorem{case}{Case}\renewcommand{\thecase}{}
\newtheorem{rmk}[thm]{Remark}
\newtheorem*{rmk*}{Remark}
\def\alphenumi{
  \def\theenumi{\alph{enumi}}
  \def\p@enumi{\theenumi}
  \def\labelenumi{(\@alph\c@enumi)}}
\def\thecase{\@arabic\c@case}
\def\thestep{\@arabic\c@step}
\def\hhmm{\number\hh:\ifnum\mm<10{}0\fi\number\mm}
\let\oldmarginpar\marginpar
\renewcommand\marginpar[1]{\-\oldmarginpar[\raggedleft\footnotesize #1]%
{\raggedright\footnotesize #1}}
\newcommand\BB{\mathbb{B}}
\newcommand\CC{\mathbb{C}}
\newcommand\NN{\mathbb{N}}
\newcommand\RR{\mathbb{R}}
\newcommand\ZZ{\mathbb{Z}}
\newcommand\cE{{\mathcal{E}}}
\newcommand\cF{{\mathcal{F}}}
\newcommand\sB{{\mathscr{B}}}
\newcommand\sC{{\mathscr{C}}}
\newcommand\sD{{\mathscr{D}}}
\newcommand\sE{{\mathscr{E}}}
\newcommand\sF{{\mathscr{F}}}
\newcommand\sG{{\mathscr{G}}}
\newcommand\sH{{\mathscr{H}}}
\newcommand\sL{{\mathscr{L}}}
\newcommand\sM{{\mathscr{M}}}
\newcommand\sO{{\mathscr{O}}}
\newcommand\sU{{\mathscr{U}}}
\newcommand\sX{{\mathscr{X}}}
\newcommand\sY{{\mathscr{Y}}}
\newcommand\eps{\varepsilon}
\newcommand\less{\setminus}
\DeclareMathOperator{\Crit}{Crit}
\newcommand\diag{\operatorname{diag}}
\newcommand\divg{\operatorname{div}}
\newcommand\End{\operatorname{End}}
\newcommand\Fred{\operatorname{Fred}}
\newcommand\grad{\operatorname{grad}}
\newcommand\Hom{\operatorname{Hom}}
\newcommand\Ind{\operatorname{Index}}
\newcommand\Ker{\operatorname{Ker}}
\newcommand\vol{\operatorname{vol}}
\newcommand\id{{\mathrm{id}}}
\newcommand\mutatis{{\emph{mutatis mutandis }}}
\numberwithin{equation}{section}
\begin{document}

\title[{\L}ojasiewicz--Simon gradient inequalities]{{\L}ojasiewicz--Simon gradient inequalities for the harmonic map energy function}

\author[Paul M. N. Feehan]{Paul M. N. Feehan}
\address{Department of Mathematics, Rutgers, The State University of New Jersey, 110 Frelinghuysen Road, Piscataway, NJ 08854-8019, United States of America}
\email{feehan@math.rutgers.edu}

\author[Manousos Maridakis]{Manousos Maridakis}
\address{Department of Mathematics, Rutgers, The State University of New Jersey, 110 Frelinghuysen Road, Piscataway, NJ 08854-8019, United States of America}
\email{mmaridaki1@gmail.com}

\date{This version: March 5, 2019}

\begin{abstract}
We apply our abstract gradient inequalities developed by the authors in \cite{Feehan_Maridakis_Lojasiewicz-Simon_Banach} to prove {\L}ojasiewicz--Simon gradient inequalities for the harmonic map energy function using Sobolev spaces which impose minimal regularity requirements on maps between closed, Riemannian manifolds. Our {\L}ojasiewicz--Simon gradient inequalities for the harmonic map energy function generalize those of Kwon \cite[Theorem 4.2]{KwonThesis}, Liu and Yang \cite[Lemma 3.3]{Liu_Yang_2010}, Simon \cite[Theorem 3]{Simon_1983}, \cite[Equation (4.27)]{Simon_1985}, and Topping \cite[Lemma 1]{Topping_1997}.
\end{abstract}


\subjclass[2010]{Primary 58E20; secondary 37D15}

\keywords{Harmonic maps, {\L}ojasiewicz--Simon gradient inequality, Morse--Bott theory on Banach manifolds}

\thanks{Paul Feehan was partially supported by National Science Foundation grant DMS-1510064 and the Oswald Veblen Fund and Fund for Mathematics (Institute for Advanced Study, Princeton) during the preparation of this article.}

\maketitle
\tableofcontents

\section{Introduction}
\label{sec:Introduction}
Our primary goal in this article is to prove {\L}ojasiewicz--Simon gradient inequalities for the harmonic map energy function. A key feature of our results is that we use systems of Sobolev norms that appear to be as as weak as possible. In a wealth of applications, \emph{{\L}ojasiewicz--Simon gradient inequalities} have played a significant role in analyzing questions such as
\begin{inparaenum}[\itshape a\upshape)]
\item global existence, convergence, and analysis of singularities for solutions to nonlinear evolution equations that are realizable as gradient-like systems for an energy function,
\item uniqueness of tangent cones, and
\item energy gaps and discreteness of energies.
\end{inparaenum}
For applications of the {\L}ojasiewicz--Simon gradient inequality to the harmonic map energy function in particular, we refer to Irwin \cite{IrwinThesis}, Kwon \cite{KwonThesis}, Liu and Yang \cite{Liu_Yang_2010}, Simon \cite{Simon_1985}, and Topping \cite{ToppingThesis, Topping_1997}, while for a survey of applications to other energy functions, we refer the reader to Feehan and Maridakis \cite[Section 1]{Feehan_Maridakis_Lojasiewicz-Simon_Banach}.

There are essentially three approaches to establishing a {\L}ojasiewicz--Simon gradient inequality for a particular energy function arising in geometric analysis or mathematical physics:
\begin{inparaenum}[\itshape 1\upshape)]
\item establish the inequality from first principles,
\item adapt the argument employed by Simon in the proof of his
\cite[Theorem 3]{Simon_1983}, or
\item apply an abstract version of the {\L}ojasiewicz--Simon gradient inequality for an analytic or Morse--Bott function on a Banach space.
\end{inparaenum}
Most famously, the first approach is exactly that employed by Simon in \cite{Simon_1983}, although this is also the avenue followed by Kwon \cite{KwonThesis}, Liu and Yang \cite{Liu_Yang_2010} and Topping \cite{ToppingThesis, Topping_1997} for the harmonic map energy function. We establish versions of the {\L}ojasiewicz--Simon gradient inequality for the harmonic map energy function (Theorem
\ref{mainthm:Lojasiewicz-Simon_gradient_inequality_energy_functional_Riemannian_manifolds}), using systems of Sobolev norms in these applications that are (as best we can tell) as weak as possible. Our gradient inequality for the harmonic map energy function is a significant generalization of previous inequalities due to Kwon \cite[Theorem 4.2]{KwonThesis}, Liu and Yang \cite[Lemma 3.3]{Liu_Yang_2010}, Simon \cite[Theorem 3]{Simon_1983}, \cite[Equation (4.27)]{Simon_1985}, and Topping \cite[Lemma 1]{Topping_1997}.

We begin in Section \ref{subsec:Lojasiewicz-Simon_gradient_inequality_abstract_functional} by reviewing our abstract {\L}ojasiewicz--Simon gradient inequalities (Theorems \ref{mainthm:Lojasiewicz-Simon_gradient_inequality_dualspace}, \ref{mainthm:Lojasiewicz-Simon_gradient_inequality}, \ref{mainthm:Lojasiewicz-Simon_gradient_inequality2}, and \ref{mainthm:Optimal_Lojasiewicz-Simon_gradient_inequality_Morse-Bott_energy_functional}) for an analytic function on a Banach space. We state our results on {\L}ojasiewicz--Simon gradient inequalities for the harmonic map energy function in Section  \ref{subsec:Lojasiewicz-Simon_gradient_inequality_harmonic_map_functional}. In proving the main result,  Theorem \ref{mainthm:Lojasiewicz-Simon_gradient_inequality_energy_functional_Riemannian_manifolds},  one must exploit the Banach manifold structure of the space of $W^{k,p}$ maps.  While other authors have also considered the smooth manifold structure of spaces of maps between smooth manifolds (see Eichhorn \cite{Eichhorn_1993}, Krikorian \cite{Krikorian_1972}, or Piccione and Tausk \cite{Piccione_Tausk_2001}) or approximation properties (see Bethuel \cite{Bethuel_1991}), none appear to have considered the specific question of interest to us here, namely, the real analytic manifold structure of the space of Sobolev maps from a closed, Riemannian, $C^\infty$ manifold into a closed, real analytic, Riemannian manifold. Moreover, the question does not appear to be considered directly in standard references for harmonic maps (such as H{\'e}lein \cite{Helein_harmonic_maps}, Jost \cite{Jost_riemannian_geometry_geometric_analysis}, or Struwe \cite{Struwe_1996, Struwe_variational_methods}, or references cited therein). The main result, Theorem \ref{mainthm:Lojasiewicz-Simon_gradient_inequality_energy_functional_Riemannian_manifolds}, follows by applying our abstract {\L}ojasiewicz--Simon gradient inequality, Theorem \ref{mainthm:Lojasiewicz-Simon_gradient_inequality}, on the appropriate Banach charts.

\subsection{{\L}ojasiewicz--Simon gradient inequalities for analytic and Morse--Bott functions on Banach spaces}
\label{subsec:Lojasiewicz-Simon_gradient_inequality_abstract_functional}
We begin with two abstract versions of Simon's infinite-dimensional version \cite[Theorem 3]{Simon_1983} of the {\L}ojasiewicz gradient
inequality \cite{Lojasiewicz_1965}. Let $\sX$ be a Banach space and let $\sX^*$ denote its continuous dual space. We call a bilinear form\footnote{Unless stated otherwise, all Banach spaces are considered to be real in this article.}, $b:\sX\times\sX \to \RR$, \emph{definite} if $b(x,x) \neq 0$ for all $x \in \sX\less\{0\}$. We say that a continuous \emph{embedding} of a Banach space into its continuous dual space, $\jmath:\sX\to\sX^*$, is \emph{definite} if the pullback of the canonical pairing, $\sX\times\sX \ni (x,y) \mapsto \langle x,\jmath(y)\rangle_{\sX\times\sX^*} \to \RR$, is a definite bilinear form.

\begin{mainthm}[{\L}ojasiewicz--Simon gradient inequality for analytic functions on Banach spaces]
\label{mainthm:Lojasiewicz-Simon_gradient_inequality_dualspace}
(See Feehan and Maridakis \cite[Theorem 1]{Feehan_Maridakis_Lojasiewicz-Simon_Banach}.)  
Let $\sX \subset \sX^*$ be a continuous, definite embedding of a Banach space into its dual space. Let $\sU \subset \sX$ be an open subset, $\sE:\sU\to\RR$ be an analytic function, and $x_\infty\in\sU$ be a critical point of $\sE$, that is, $\sE'(x_\infty) = 0$. Assume that $\sE''(x_\infty):\sX\to \sX^*$ is a Fredholm operator with index zero. Then there are constants $Z \in (0, \infty)$, and $\sigma \in (0,1]$, and $\theta \in [1/2,1)$, with the following significance. If $x \in \sU$ obeys
\begin{equation}
\label{eq:Lojasiewicz-Simon_gradient_inequality_neighborhood}
\|x-x_\infty\|_\sX < \sigma,
\end{equation}
then
\begin{equation}
\label{eq:Lojasiewicz-Simon_gradient_inequality_analytic_functional}
\|\sE'(x)\|_{\sX^*} \geq Z|\sE(x) - \sE(x_\infty)|^\theta.
\end{equation}
\end{mainthm}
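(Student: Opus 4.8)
The plan is to reduce the infinite-dimensional problem to a finite-dimensional one via a Lyapunov--Schmidt decomposition, and then invoke the classical finite-dimensional {\L}ojasiewicz gradient inequality for the resulting real-analytic function of finitely many variables. Write $L := \sE''(x_\infty) \in \cL(\sX,\sX^*)$, which by hypothesis is Fredholm of index zero; hence $K := \Ker L$ is finite-dimensional, say $\dim K = n$, and $\ran L \subset \sX^*$ is closed of codimension $n$. Because $\sE''(x_\infty)$ is symmetric as a bilinear form and the embedding $\jmath\colon\sX\to\sX^*$ is definite, I expect to be able to choose $\jmath(K)$ as a closed complement to $\ran L$ in $\sX^*$, giving a topological splitting $\sX^* = \ran L \oplus \jmath(K)$ together with a complementary splitting $\sX = \sX_1 \oplus K$, where $\sX_1$ is closed and $L\colon\sX_1 \to \ran L$ is an isomorphism. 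Let $\Pi\colon\sX^*\to\jmath(K)$ be the associated projection with $\Ker\Pi = \ran L$.

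Next I would carry out the reduction. For $x = x_\infty + \xi + y$ with $\xi \in K$ and $y \in \sX_1$, consider the range equation $(I-\Pi)\sE'(x_\infty + \xi + y) = 0$. Its partial derivative in $y$ at the origin is $(I-\Pi)L|_{\sX_1}\colon\sX_1\to\ran L$, which is an isomorphism; since $\sE'$ is real analytic as a map $\sU\to\sX^*$, the analytic implicit function theorem produces a real-analytic solution map $\xi\mapsto y = \phi(\xi)$, defined for $\xi$ in a neighborhood of $0$ in $K$, with $\phi(0)=0$ and $\phi'(0)=0$. I then define the reduced function $\Gamma(\xi) := \sE\bigl(x_\infty + \xi + \phi(\xi)\bigr)$, which is real analytic on a neighborhood of $0 \in K \cong \RR^n$, and apply {\L}ojasiewicz's inequality to obtain constants $C>0$ and $\theta\in[1/2,1)$ with $\|\nabla\Gamma(\xi)\| \geq C\,|\Gamma(\xi)-\Gamma(0)|^\theta$.

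The remaining work is to transfer this finite-dimensional inequality back to $\sE$ for an \emph{arbitrary} nearby $x$, not merely for points on the reduced manifold $\{x_\infty + \xi + \phi(\xi)\}$. Writing $x = x_\infty + \xi + y$, the two estimates I would establish are, first, a coercivity bound $\|y-\phi(\xi)\|_\sX \lesssim \|(I-\Pi)\sE'(x)\|_{\sX^*} \lesssim \|\sE'(x)\|_{\sX^*}$ coming from the invertibility of $L$ on $\sX_1$, and second, the identification of $\nabla\Gamma(\xi)$ with $\Pi\sE'(x_\infty+\xi+\phi(\xi))$ so that $\|\nabla\Gamma(\xi)\| \lesssim \|\sE'(x)\|_{\sX^*}$. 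Taylor-expanding $\sE$ about $x_\infty + \xi + \phi(\xi)$ in the direction $y-\phi(\xi)$, the first-order term vanishes because $(I-\Pi)\sE'$ annihilates $\sX_1$ there, leaving $|\sE(x) - \Gamma(\xi)| \lesssim \|y-\phi(\xi)\|_\sX^2 \lesssim \|\sE'(x)\|_{\sX^*}^2$. Combining with the finite-dimensional inequality and the splitting $\sE(x)-\sE(x_\infty) = [\sE(x)-\Gamma(\xi)] + [\Gamma(\xi)-\Gamma(0)]$ yields
\[
|\sE(x)-\sE(x_\infty)|^\theta \lesssim \|\sE'(x)\|_{\sX^*}^{2\theta} + \|\sE'(x)\|_{\sX^*} \lesssim \|\sE'(x)\|_{\sX^*},
\]
where the last step uses $2\theta \geq 1$ together with $\|\sE'(x)\|_{\sX^*} \leq 1$ for $x$ sufficiently close to $x_\infty$; this is the desired inequality after shrinking $\sigma$.

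The step I expect to be the main obstacle is the construction of the splittings and the projection $\Pi$ in the genuinely Banach (non-Hilbert) setting: one cannot simply invoke a Riesz identification, so the role of the definite embedding $\jmath$ is precisely to supply a complement $\jmath(K)$ to $\ran L$ compatible with the symmetry of $L$, and care is needed to verify that $\jmath(K)\cap\ran L = \{0\}$ and that the dimension count closes. A secondary technical point is ensuring $\sE'\colon\sU\to\sX^*$ is genuinely real analytic (not merely smooth), so that the analytic implicit function theorem applies and $\Gamma$ inherits analyticity; this is what ultimately licenses the use of {\L}ojasiewicz's finite-dimensional theorem.
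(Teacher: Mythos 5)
Your Lyapunov--Schmidt reduction to the finite-dimensional {\L}ojasiewicz inequality is essentially the argument behind this theorem in the cited source \cite[Theorem 1]{Feehan_Maridakis_Lojasiewicz-Simon_Banach}; the present paper does not reprove the statement but imports it (and notes it also follows as a corollary of Theorem \ref{mainthm:Lojasiewicz-Simon_gradient_inequality}), so there is no in-paper proof to diverge from. Your proposal is sound: definiteness of $\jmath$ together with symmetry of the Hessian bilinear form does give $\jmath(K)\cap\ran L=\{0\}$ and hence the splittings, and even if the first-order Taylor term $\langle y-\phi(\xi),\,\Pi\sE'(x_\infty+\xi+\phi(\xi))\rangle$ does not vanish exactly for your choice of $\sX_1$ (it does if you take $\sX_1$ to be the pre-annihilator of $\jmath(K)$), it is in any case $O(\|\sE'(x)\|_{\sX^*}^2)$ by your two transfer estimates, so the final combination is unaffected.
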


\begin{rmk}[Comments on the embedding hypothesis in Theorem \ref{mainthm:Lojasiewicz-Simon_gradient_inequality_dualspace} and comparison with Huang's Theorem]
\label{rmk:Embedding_hypothesis_Huang_theorem_2-4-5}
The hypothesis in Theorem \ref{mainthm:Lojasiewicz-Simon_gradient_inequality_dualspace} on definiteness of the continuous embedding, $\sX \subset \sX^*$, is easily achieved given a continuous and dense embedding $\sX$ into a Hilbert space $\sH$. See Feehan and Maridakis \cite[Remark 1.1]{Feehan_Maridakis_Lojasiewicz-Simon_Banach}. 
\end{rmk}

\begin{rmk}[Index of a Fredholm Hessian operator on a reflexive Banach space]
\label{rmk:Index_Fredholm_Hessian_operator_reflexive_Banach_space}
If $\sX$ is a reflexive Banach space in Theorem \ref{mainthm:Lojasiewicz-Simon_gradient_inequality_dualspace}, then the hypothesis that $\sE''(x_\infty):\sX\to \sX^*$ has index zero can be omitted, since $\sE''(x_\infty)$ is always a symmetric operator and thus necessarily has index zero when $\sX$ is reflexive by \cite[Lemma D.3]{Feehan_Maridakis_Lojasiewicz-Simon_harmonic_maps_v5}.
\end{rmk}

\begin{rmk}[Topping's {\L}ojasiewicz--Simon gradient inequality for maps from $S^2$ to $S^2$ with small energy]
\label{rmk:Topping_Lojasiewicz-Simon_gradient_inequality}
Since the energy function, $\sE:\sU \subset \sX \to \RR$, in Theorems \ref{mainthm:Lojasiewicz-Simon_gradient_inequality} or \ref{mainthm:Optimal_Lojasiewicz-Simon_gradient_inequality_Morse-Bott_energy_functional} often arises in applications in the context of Morse or Morse--Bott theory, it is of interest to know when the {\L}ojasiewicz--Simon neighborhood condition \eqref{eq:Lojasiewicz-Simon_gradient_inequality_neighborhood}, namely $\|x-x_\infty\|_\sX < \sigma$ for a point $x \in \sU$ and a critical point $x_\infty$ and small  $\sigma \in (0,1]$, can be relaxed to $|\sE(x) - \sE(x_\infty)| < \eps$ and small $\eps \in (0,1]$.

When $\sE$ is the harmonic map energy function for maps $f$ from $S^2$ to $S^2$, where $S^2$ has its standard round metric of radius one, Topping \cite[Lemma 1]{Topping_1997}
has proved a version of the {\L}ojasiewicz--Simon gradient inequality where the critical point $f_\infty$ is the constant map and $f$ is a smooth map that is only required to obey a small energy condition, $\sE(f) < \eps$, in order for the {\L}ojasiewicz--Simon gradient inequality \eqref{eq:Lojasiewicz-Simon_gradient_inequality_analytic_functional} to hold in the sense that $\|\sE'(f)\|_{L^2(S^2)} \geq Z|\sE(f)|^{1/2}$ for some constant $Z \in [1,\infty)$. An analogue of \cite[Lemma 1]{Topping_1997} may hold more generally for the harmonic map energy function in the case of maps $f$ from a closed Riemann surface $M$ into a closed Riemannian manifold $N$ such that $|\sE(f) - \sE(f_\infty)| < \eps$ for a small enough constant $\eps \in (0,1]$ and a harmonic map $f_\infty$ from $M$ to $N$.
\end{rmk}

As emphasized by one researcher, the hypotheses of Theorem \ref{mainthm:Lojasiewicz-Simon_gradient_inequality_dualspace} are restrictive. For example, even though its hypotheses allow $\sX$ to be a Banach space, when the Hessian, $\sE''(x_\infty)$, is defined by an elliptic, linear, second-order partial differential operator, then (in the notation of Remark \ref{rmk:Choice_Banach_and_Hilbert_spaces_Lojasiewicz-Simon_gradient_inequality}) one is naturally led to choose $\sX$ to be a Hilbert space, $W^{1,2}(M;V)$, with dual space, $\sX^* = W^{-1,2}(M;V^*)$, in order to obtain the required Fredholm property. However, such a choice could make it impossible to simultaneously obtain the required real analyticity of the function, $\sE:\sX \supset \sU \to \RR$. As explained in Remark \ref{rmk:Choice_Banach_and_Hilbert_spaces_Lojasiewicz-Simon_gradient_inequality}, the forthcoming generalization greatly relaxes these constraints and implies Theorem \ref{mainthm:Lojasiewicz-Simon_gradient_inequality_dualspace} as a corollary. We first recall the concept of a gradient map \cite[Section 2.1B]{Huang_2006}, \cite[Section 2.5]{Berger_1977}.

\begin{defn}[Gradient map]
\label{defn:Huang_2-1-1}
(See Huang \cite[Definition 2.1.1]{Huang_2006}.)
Let $\sU\subset \sX$ be an open subset of a Banach space, $\sX$, and let $\tilde\sX$ be a Banach space with continuous embedding, $\tilde\sX \subseteqq \sX^*$. A continuous map, $\sM:\sU\to \tilde\sX$, is called a \emph{gradient map} if there exists a $C^1$ function, $\sE:\sU\to\RR$, such that
\begin{equation}
\label{eq:Differential_and_gradient_maps}
\sE'(x)v = \langle v,\sM(x)\rangle_{\sX\times\sX^*}, \quad \forall\, x \in \sU, \quad v \in \sX,
\end{equation}
where $\langle \cdot , \cdot \rangle_{\sX\times\sX^*}$ is the canonical bilinear form on $\sX\times\sX^*$. The real-valued function, $\sE$, is called a \emph{potential} for the gradient map, $\sM$.
\end{defn}

When $\tilde\sX = \sX^*$ in Definition \ref{defn:Huang_2-1-1}, then the differential and gradient maps coincide.

\begin{mainthm}[Refined {\L}ojasiewicz--Simon gradient inequality for analytic functions on Banach spaces]
\label{mainthm:Lojasiewicz-Simon_gradient_inequality}
(See Feehan and Maridakis \cite[Theorem 2]{Feehan_Maridakis_Lojasiewicz-Simon_Banach}.)
Let $\sX$ and $\tilde\sX$ be Banach spaces with continuous embeddings,
$\sX \subset \tilde\sX \subset \sX^*$, and such that the embedding, $\sX \subset \sX^*$, is definite. Let $\sU \subset \sX$ be an open subset, $\sE:\sU\to\RR$ be a $C^2$ function with real analytic gradient map, $\sM:\sU\to\tilde\sX$, and $x_\infty\in\sU$ be a critical point of $\sE$, that is, $\sM(x_\infty) = 0$. If $\sM'(x_\infty):\sX\to \tilde\sX$ is a Fredholm operator with index zero, then there are constants, $Z \in (0,\infty)$, and $\sigma \in (0,1]$, and $\theta \in [1/2, 1)$, with the following significance. If $x \in \sU$ obeys
\begin{equation}
\label{eq:Lojasiewicz-Simon_gradient_inequality_neighborhood_general}
\|x-x_\infty\|_\sX < \sigma,
\end{equation}
then
\begin{equation}
\label{eq:Lojasiewicz-Simon_gradient_inequality_analytic_functional_general}
\|\sM(x)\|_{\tilde\sX} \geq Z|\sE(x) - \sE(x_\infty)|^\theta.
\end{equation}
\end{mainthm}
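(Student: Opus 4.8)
The plan is to prove Theorem \ref{mainthm:Lojasiewicz-Simon_gradient_inequality} by a Lyapunov--Schmidt reduction to a finite-dimensional problem, where the classical {\L}ojasiewicz gradient inequality for real analytic functions applies, and then to transfer the resulting estimate back to the Banach space $\sX$. Since $\sM$ is the gradient of $\sE$ and is real analytic, the linearization $L := \sM'(x_\infty):\sX\to\tilde\sX$ is a symmetric Fredholm operator of index zero. First I would use this to split $\sX = \Ker L \oplus \sX_1$ and $\tilde\sX = \Ran L \oplus \Ker L$ as topological direct sums, identifying $\coker L$ with the finite-dimensional $\Ker L \subset \sX \subset \tilde\sX$; the definiteness of $\sX\subset\sX^*$ enters here, since it provides a nondegenerate pairing in which I may take $\sX_1$ to be orthogonal to $\Ker L$ and thereby preserve the variational structure of the reduced problem. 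Let $\Pi:\tilde\sX\to\Ran L$ denote the associated projection and $P := I - \Pi$ the finite-rank projection onto $\Ker L$.

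Next I would carry out the reduction. Writing a nearby point as $x = x_\infty + \xi + y$ with $\xi\in\Ker L$ and $y\in\sX_1$, I would solve $\Pi\sM(x_\infty+\xi+y) = 0$ for $y$ in terms of $\xi$. This equation holds at $(\xi,y)=(0,0)$, and its $y$-derivative there is $\Pi L|_{\sX_1}:\sX_1\to\Ran L$, an isomorphism; the analytic implicit function theorem therefore yields a real analytic map $y = \phi(\xi)$, defined near $0\in\Ker L$, with $\phi(0)=0$. I would then set $\Gamma(\xi) := \sE(x_\infty+\xi+\phi(\xi))$, a real analytic function on a neighborhood of $0$ in the finite-dimensional $\Ker L$, and check that its gradient is, under the identification $\Ker L \cong (\Ker L)^*$ furnished by the definite pairing, exactly $P\sM(x_\infty+\xi+\phi(\xi))$: the chain-rule term involving $\phi'(\xi)$ is annihilated because $\Pi\sM$ vanishes at the reduced point and $\sX_1$ is orthogonal to $\Ker L$. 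Applying the classical finite-dimensional {\L}ojasiewicz inequality to $\Gamma$ gives constants $\theta\in[1/2,1)$ and $C>0$ with $\|\nabla\Gamma(\xi)\| \geq C|\Gamma(\xi)-\Gamma(0)|^\theta$ for $\xi$ near $0$.

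Finally I would transfer this back. For a general nearby $x = x_\infty+\xi+y$, I would establish a transverse bound $\|\Pi\sM(x)\|_{\tilde\sX} \gtrsim \|y-\phi(\xi)\|_\sX$, from invertibility of $\Pi L|_{\sX_1}$ and smoothness of $\sM$, together with a tangential comparison showing $\|P\sM(x)\|_{\tilde\sX}$ differs from $\|\nabla\Gamma(\xi)\|$ by an error controlled by $\|y-\phi(\xi)\|_\sX$. Pairing $\sM(x)$ against $x-x_\infty$ through the definite embedding and applying a Taylor estimate for $\sE$ along the segment from $x_\infty+\xi+\phi(\xi)$ to $x$, I would bound $|\sE(x)-\sE(x_\infty)| \leq |\Gamma(\xi)-\Gamma(0)| + (\text{terms of order }\|y-\phi(\xi)\|_\sX^2\text{ and }\|\sM(x)\|_{\tilde\sX})$. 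Combining these with the finite-dimensional inequality and shrinking $\sigma$ to absorb the higher-order transverse errors yields $\|\sM(x)\|_{\tilde\sX} \geq Z|\sE(x)-\sE(x_\infty)|^\theta$. The main obstacle will be this last bookkeeping: ensuring the transverse displacement $\|y-\phi(\xi)\|_\sX$ feeds into $\|\sM(x)\|_{\tilde\sX}$ at least as strongly as into the energy difference, so that it can be absorbed without degrading the exponent $\theta$ — precisely the point where the three-space structure $\sX\subset\tilde\sX\subset\sX^*$ and the definiteness hypothesis must be used in concert.
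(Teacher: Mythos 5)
This theorem is not proved in the present paper: it is quoted verbatim from the companion article \cite[Theorem 2]{Feehan_Maridakis_Lojasiewicz-Simon_Banach}, and Section \ref{subsec:Lojasiewicz-Simon_gradient_inequality_abstract_functional} only reviews its statement. Your Lyapunov--Schmidt reduction --- splitting $\sX$ and $\tilde\sX$ along $\Ker\sM'(x_\infty)$ using symmetry of the Hessian together with the definiteness of $\sX\subset\sX^*$, applying the classical finite-dimensional {\L}ojasiewicz inequality to the reduced analytic function, and absorbing the transverse errors via the invertibility of $\sM'(x_\infty)$ on the complement --- is essentially the same argument used in that reference (and in the Simon--Huang tradition it descends from), so the proposal is correct and takes the same route as the source the paper relies on.
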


\begin{rmk}[Comments on the embedding hypothesis in Theorem \ref{mainthm:Lojasiewicz-Simon_gradient_inequality}]
\label{rmk:Embedding_hypothesis_Lojasiewicz-Simon_gradient_inequality}
The hypothesis in Theorem \ref{mainthm:Lojasiewicz-Simon_gradient_inequality} on the continuous embedding, $\sX \subset \sX^*$, is easily achieved given a continuous and dense embedding of $\sX$ into a Hilbert space $\sH$. See Feehan and Maridakis \cite[Remark 1.5]{Feehan_Maridakis_Lojasiewicz-Simon_Banach}.
\end{rmk}

\begin{rmk}[On the choice of Banach spaces in applications of Theorem \ref{mainthm:Lojasiewicz-Simon_gradient_inequality}]
\label{rmk:Choice_Banach_and_Hilbert_spaces_Lojasiewicz-Simon_gradient_inequality}
The hypotheses of Theorem \ref{mainthm:Lojasiewicz-Simon_gradient_inequality} are designed to give the most flexibility in applications of a {\L}ojasiewicz--Simon gradient inequality to analytic functions on Banach spaces. An example of a convenient choice of Banach spaces modeled as Sobolev spaces, when $\sM'(x_\infty)$ is realized as an elliptic partial differential operator of order $m$, would be
\[
\sX = W^{k,p}(X;V), \quad \tilde\sX = W^{k-m,p}(X;V), \quad\text{and}\quad \sX^* = W^{-k,p'}(X;V),
\]
where $k\in\ZZ$ is an integer, $p \in (1,\infty)$ is a constant with dual H\"older exponent $p'\in(1,\infty)$ defined by $1/p+1/p'=1$, while $X$ is a closed Riemannian manifold of dimension $d\geq 2$ and $V$ is a Riemannian vector bundle with a compatible connection, $\nabla:C^\infty(X;V) \to C^\infty(X;T^*X\otimes V)$, and $W^{k,p}(X;V)$ denotes a Sobolev space defined in the standard way \cite{Aubin_1998}. When the integer $k$ is chosen large enough, the verification of analyticity of the gradient map, $\sM:\sU\to\tilde\sX$, is straightforward. Normally, that is the case when $k\geq m+1$ and $(k-m)p>d$ or $k-m=d$ and $p=1$, since $W^{k-m,p}(X;\CC)$ is then a Banach algebra by \cite[Theorem 4.39]{AdamsFournier}. If the Banach spaces are instead modeled as H\"older spaces, as in Simon \cite{Simon_1983}, a convenient choice of Banach spaces would be
\[
\sX = C^{k,\alpha}(X;V) \quad \text{and} \quad \tilde\sX = C^{k-m,\alpha}(X;V),
\]
where $\alpha \in (0,1)$ and $k\geq m$, and these H\"older spaces are defined in the standard way \cite{Aubin_1998}. Following Remark~\ref{rmk:Embedding_hypothesis_Huang_theorem_2-4-5}, the definiteness of the embedding  $C^{k,\alpha}(X;V)=\sX \subset \sX^*$ in this case is achieved by observing that $C^{k,\alpha}(X;V) \subset L^2(X;V)$.
\end{rmk}
\
Theorem \ref{mainthm:Lojasiewicz-Simon_gradient_inequality} appears to us to be the most widely applicable abstract version of the {\L}ojasiewicz--Simon gradient inequality that we are aware of in the literature. However, for applications where $\sM'(x_\infty)$ is realized as an elliptic partial differential operator of \emph{even} order, $m=2n$, and the nonlinearity of the gradient map is sufficiently mild, it often suffices to choose $\sX$ to be the Banach space, $W^{n,2}(X;V)$, and choose $\tilde\sX = \sX^*$ to be the Banach space, $W^{-n,2}(X;V)$. The distinction between the differential, $\sE'(x) \in \sX^*$, and the gradient, $\sM(x) \in \tilde\sX$, then disappears. Similarly, the distinction between the Hessian, $\sE''(x_\infty) \in (\sX\times\sX)^*$, and the Hessian operator, $\sM'(x_\infty) \in \sL(\sX,\tilde\sX)$, disappears. Finally, if $\sE:\sX\supset\sU \to \RR$ is real analytic, then the simpler Theorem \ref{mainthm:Lojasiewicz-Simon_gradient_inequality_dualspace} is often adequate for applications.

While Theorem \ref{mainthm:Lojasiewicz-Simon_gradient_inequality} has important applications to proofs of global existence, convergence, convergence rates, and stability of gradient flows defined by an energy function, $\sE:\sX\supset \sU \to \RR$, with gradient map, $\sM:\sX\supset \sU \to \tilde\sX$, (see \cite[Section 2.1]{Feehan_yang_mills_gradient_flow_v4} for an introduction and Simon \cite{Simon_1983} for his pioneering development), the gradient inequality \eqref{eq:Lojasiewicz-Simon_gradient_inequality_analytic_functional_general} is most useful when it has the form,
\[
\|\sM(x)\|_{\sH} \geq Z|\sE(x) - \sE(x_\infty)|^\theta, \quad\forall\, x \in \sU \text{ with } \|x-x_\infty\|_\sX < \sigma,
\]
where $\sH$ is a Hilbert space and the Banach space, $\sX$, is a dense subspace of $\sH$ with continuous embedding, $\sX \subset \sH$, and so $\sH^* \subset \sX^*$ is also a continuous embedding.  For example, to obtain Theorem \ref{mainthm:Lojasiewicz-Simon_gradient_inequality_energy_functional_Riemannian_manifolds} for the harmonic map energy function, we choose
\[
\sX = W^{k,p}(M;f_\infty^*TN),
\]
but for applications to gradient flow, we would like to replace the gradient inequality \eqref{eq:Lojasiewicz-Simon_gradient_inequality_harmonic_map_energy_functional_Riemannian_manifold} by
\[
\|\sM(f)\|_{L^2(M;f^*TN)}
\geq
Z|\sE(f) - \sE(f_\infty)|^\theta,
\]
but under the original {\L}ojasiewicz--Simon neighborhood condition \eqref{eq:Lojasiewicz-Simon_gradient_inequality_harmonic_map_neighborhood_Riemannian_manifold},
\[
\|f - f_\infty\|_{W^{k,p}(M)} < \sigma.
\]
Unfortunately, such an $L^2$ gradient inequality
(or Simon's \cite[Theorem 3]{Simon_1983}, \cite[Equation (4.27)]{Simon_1985}) does not follow from Theorem \ref{mainthm:Lojasiewicz-Simon_gradient_inequality} when $M$ has dimension $d \geq 4$, as explained in the proof of Corollary \ref{maincor:Lojasiewicz-Simon_gradient_inequality_energy_functional_Riemannian_manifolds_L2} and Remark \ref{rmk:Lojasiewicz-Simon_gradient_inequality_energy_functional_manifolds_L2_exclude_d_geq_4}; see also \cite{Feehan_harmonic_map_relative_energy_gap}. However, these $L^2$ gradient inequalities \emph{are} implied by the forthcoming Theorem \ref{mainthm:Lojasiewicz-Simon_gradient_inequality2} which generalizes and simplifies Huang's \cite[Theorem 2.4.2 (i)]{Huang_2006} (see Feehan and Maridakis \cite[Theorem E.2]{Feehan_Maridakis_Lojasiewicz-Simon_coupled_Yang-Mills_v6}. We refer to Feehan and Maridakis \cite[Section 1.2]{Feehan_Maridakis_Lojasiewicz-Simon_Banach} for further discussion.

\begin{mainthm}[Generalized {\L}ojasiewicz--Simon gradient inequality for analytic functions on Banach spaces]
\label{mainthm:Lojasiewicz-Simon_gradient_inequality2}
(See Feehan and Maridakis \cite[Theorem 3]{Feehan_Maridakis_Lojasiewicz-Simon_Banach}.)
Let $\sX$ and $\tilde\sX$ be Banach spaces with continuous embeddings, $\sX \subset \tilde\sX \subset \sX^*$, and such that the embedding, $\sX \subset \sX^*$, is definite.  Let $\sU \subset \sX$ be an open subset, $\sE:\sU\to\RR$ be an analytic function, and $x_\infty\in\sU$ be a critical point of $\sE$, that is, $\sE'(x_\infty) = 0$. Let
\[
\sX\subset \sG \subset \tilde\sG \quad \text{and} \quad \tilde\sX \subset \tilde\sG \subset \sX^*,
\]
be continuous embeddings of Banach spaces such that the compositions,
\[
\sX\subset \sG\subset \tilde\sG \quad \text{and}\quad \sX\subset \tilde\sX\subset \tilde\sG,
\]
induce the same embedding, $\sX \subset \tilde\sG$. Let $\sM:\sU\to\tilde\sX$ be a gradient map for $\sE$ in the sense of Definition \ref{defn:Huang_2-1-1}. Suppose that for each $x \in \sU$, the bounded, linear operator,
\[
\sM'(x): \sX \to \tilde \sX,
\]
has an extension
\[
\sM_1(x): \sG \to \tilde\sG
\]
such that the map
\[
\sU \ni x \mapsto \sM_1(x) \in \sL(\sG, \tilde\sG) \quad\hbox{is continuous}.
\]
If $\sM'(x_\infty):\sX\to \tilde\sX$ and $\sM_1(x_\infty):\sG\to \tilde\sG$ are Fredholm operators with index zero, then there are constants, $Z \in (0,\infty)$ and $\sigma \in (0,1]$ and $\theta \in [1/2, 1)$, with the following significance. If $x \in \sU$ obeys
\begin{equation}
\label{eq:Lojasiewicz-Simon_gradient_inequality_neighborhood_general2}
\|x-x_\infty\|_\sX < \sigma,
\end{equation}
then
\begin{equation}
\label{eq:Lojasiewicz-Simon_gradient_inequality_analytic_functional_general2}
\|\sM(x)\|_{\tilde\sG} \geq Z|\sE(x) - \sE(x_\infty)|^\theta.
\end{equation}
\end{mainthm}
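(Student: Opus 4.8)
The plan is to run the Lyapunov--Schmidt reduction that underlies Theorem \ref{mainthm:Lojasiewicz-Simon_gradient_inequality}, but to carry the weaker pair $(\sG,\tilde\sG)$ through every step so that the final lower bound on the gradient is measured in the $\tilde\sG$-norm rather than in the $\tilde\sX$-norm. After translating so that $x_\infty = 0$ and $\sE(x_\infty) = 0$, write $A := \sM'(x_\infty) \in \sL(\sX,\tilde\sX)$ and $A_1 := \sM_1(x_\infty) \in \sL(\sG,\tilde\sG)$ for the two Hessian operators; by hypothesis both are Fredholm of index zero, and $A_1$ restricts to $A$ on $\sX$ under the compatible embeddings, so that $\ker A = \ker A_1 \cap \sX$ holds automatically. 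Since the kernel part of the reduction will be finite dimensional, the genuinely new content, relative to Theorem \ref{mainthm:Lojasiewicz-Simon_gradient_inequality}, is an estimate controlling the \emph{range} direction from below in the weaker norm $\tilde\sG$, which is supplied by $A_1$.

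The central point, and the step I expect to be the main obstacle, is to reconcile the two Fredholm decompositions. One must upgrade the inclusion $\ker A \subseteq \ker A_1$ to the regularity statement $\ker A_1 \subseteq \sX$, so that $\ker A_1 = \ker A =: K$ is a single finite-dimensional subspace of $\sX$; without this, a complement of $\ker A_1$ in $\sG$ need not contain the $\sX$-complement of $K$, and the coercivity below fails. Using the compatibility hypothesis that $\sX \subset \sG \subset \tilde\sG$ and $\sX \subset \tilde\sX \subset \tilde\sG$ induce the same embedding $\sX \subset \tilde\sG$, together with the self-adjointness of the Hessian of a gradient map and the definiteness of $\sX \subset \sX^*$, I would produce a single finite-rank projection $\Pi$ onto a closed complement of $\ran A_1$ in $\tilde\sG$ whose restriction to $\tilde\sX$ is a projection onto a complement of $\ran A$, together with splittings $\sX = K \oplus \sX_2$ and $\sG = K \oplus \sG_2$ with $\sX_2 = \sX \cap \sG_2$. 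The index-zero Fredholm property of $A_1$ then yields the crucial weak-norm coercivity estimate
\[
\|A_1 w\|_{\tilde\sG} \geq c\,\|w\|_{\sG}, \quad \forall\, w \in \sG_2,
\]
for some $c > 0$, which is the ingredient absent from the proof of Theorem \ref{mainthm:Lojasiewicz-Simon_gradient_inequality}.

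With the splitting in hand, the reduction proceeds exactly as for Theorem \ref{mainthm:Lojasiewicz-Simon_gradient_inequality}. Writing $x = \xi + \eta$ with $\xi \in K$ and $\eta \in \sX_2$, the range-component equation $(I - \Pi)\sM(\xi + \eta) = 0$ is solved for $\eta = \eta(\xi) \in \sX_2$ by the analytic implicit function theorem, using that $\sM$ is real analytic into $\tilde\sX$ and that $A|_{\sX_2}\colon \sX_2 \to \ran A$ is an isomorphism. One then sets $\Gamma(\xi) := \sE(\xi + \eta(\xi))$, a real analytic function on a neighborhood of $0$ in the finite-dimensional space $K$, and the classical finite-dimensional {\L}ojasiewicz inequality gives $\|\Gamma'(\xi)\| \geq c\,|\Gamma(\xi)|^{\theta}$ for some $\theta \in [1/2,1)$.

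Finally I would assemble the weak-norm estimate. Using $\sM(0) = 0$ and the mean-value representation $\sM(\xi + \eta) - \sM(\xi + \eta(\xi)) = \int_0^1 \sM_1\bigl(\xi + \eta(\xi) + s(\eta - \eta(\xi))\bigr)(\eta - \eta(\xi))\,ds$, valid in $\tilde\sG$ since $\eta - \eta(\xi) \in \sX_2 \subset \sG_2$, the continuity of $x \mapsto \sM_1(x) \in \sL(\sG,\tilde\sG)$ lets me replace $\sM_1$ by $A_1$ up to an error that is small for $\|x\|_\sX < \sigma$; applying $(I - \Pi)$ and the coercivity estimate gives $\|(I - \Pi)\sM(x)\|_{\tilde\sG} \geq c\,\|\eta - \eta(\xi)\|_{\sG}$. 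Since $\ran \Pi$ is finite dimensional and contained in $\tilde\sX$, all norms on it are equivalent, so $\|\Pi\sM(x)\|_{\tilde\sG}$ is comparable to $\|\Gamma'(\xi)\|$ up to an error controlled by $\|\eta - \eta(\xi)\|_{\sG}$, exactly as in the proof of Theorem \ref{mainthm:Lojasiewicz-Simon_gradient_inequality}. Combining these two bounds with the Taylor estimate $|\sE(x)| \leq |\Gamma(\xi)| + C\,\|\eta - \eta(\xi)\|_{\sG}^2$, the finite-dimensional {\L}ojasiewicz inequality for $\Gamma$, and absorption of the quadratic error after shrinking $\sigma$, yields $\|\sM(x)\|_{\tilde\sG} \geq Z\,|\sE(x) - \sE(x_\infty)|^{\theta}$, which is the desired inequality \eqref{eq:Lojasiewicz-Simon_gradient_inequality_analytic_functional_general2}.
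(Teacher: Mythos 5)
First, a caveat about the comparison itself: this paper does not prove Theorem \ref{mainthm:Lojasiewicz-Simon_gradient_inequality2} --- the statement is imported from the companion article \cite[Theorem 3]{Feehan_Maridakis_Lojasiewicz-Simon_Banach} and is only \emph{applied} here (in the proof of Corollary \ref{maincor:Lojasiewicz-Simon_gradient_inequality_energy_functional_Riemannian_manifolds_L2}) --- so there is no in-paper proof to measure your argument against. On its own terms, your Lyapunov--Schmidt skeleton is the expected route, and you have correctly isolated where the pair $(\sG,\tilde\sG)$ must enter: a coercivity estimate for $\sM_1(x_\infty)$ in the weak norms on a complement of the kernel.

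The difficulty is that the step you yourself flag as the main obstacle is asserted rather than carried out, and it is a genuine gap. You need $\Ker\sM_1(x_\infty)\subset\sX$, equivalently $\Ker\sM_1(x_\infty)=\Ker\sM'(x_\infty)$, together with a finite-rank projection complementary to $\ran\sM_1(x_\infty)$ in $\tilde\sG$ restricting to one complementary to $\ran\sM'(x_\infty)$ in $\tilde\sX$. The ingredients you invoke --- definiteness of $\sX\subset\sX^*$ and symmetry of the Hessian of a gradient map --- do not deliver this: the symmetry $\langle v,\sM'(x_\infty)w\rangle=\langle w,\sM'(x_\infty)v\rangle$ is only available for $v,w\in\sX$, and the hypotheses include neither density of $\sX$ in $\sG$ nor an embedding $\tilde\sG\subset\sG^*$ that would let you pass to the limit and conclude that a kernel element of $\sM_1(x_\infty)$ in $\sG$ annihilates the range of $\sM'(x_\infty)$ and hence lies in $\sX$. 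Without $\Ker\sM_1(x_\infty)=\Ker\sM'(x_\infty)$, the coercivity $\|\sM_1(x_\infty)w\|_{\tilde\sG}\geq c\|w\|_{\sG}$ on $\sX_2$ can fail: an element of $\sX_2$ that is $\sG$-close to a kernel vector of $\sM_1(x_\infty)$ lying outside $\sX$ defeats the estimate, since only the component in a complement of $\Ker\sM_1(x_\infty)$ is controlled. A second, smaller issue arises in the final absorption: the error $|\sE(x)-\Gamma(\xi)|$ is naturally bounded by the mixed product $\|\eta-\eta(\xi)\|_{\sX}\,\|\eta-\eta(\xi)\|_{\sG}$ rather than by $\|\eta-\eta(\xi)\|_{\sG}^2$, because the Hessian is only a bounded bilinear form on $\sX\times\sG$ under the stated hypotheses; for $\theta=1/2$ the resulting term $\|\eta-\eta(\xi)\|_{\sX}^{1/2}\|\eta-\eta(\xi)\|_{\sG}^{1/2}$ cannot be absorbed into $\eps\|\eta-\eta(\xi)\|_{\sG}$ merely by shrinking $\sigma$, since $\|\cdot\|_{\sG}$ is the weaker norm. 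Both points need honest arguments before the proposal counts as a proof; as written it is a plausible outline of the companion paper's strategy with the two hardest verifications left open.
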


\begin{rmk}[Generalized {\L}ojasiewicz--Simon gradient inequality for analytic functions on Banach spaces with gradient map valued in a Hilbert space]
\label{rmk:Lojasiewicz-Simon_gradient_inequality2_Hilbert}
Suppose now that $\tilde\sG = \sH$, a Hilbert space, so that the embedding $\sG\subset \sH$  in Theorem \ref{mainthm:Lojasiewicz-Simon_gradient_inequality2}, factors through $\sG\subset \sH\simeq \sH^* $ and therefore
\[
\sE'(x)v = \langle v, \sM(x) \rangle_{\sX\times\sX^*} = (v, \sM(x))_\sH, \quad\forall\, x \in \sU \text{ and } v \in \sX,
\]
using the continuous embeddings, $\tilde\sX \subset \sH \subset \sX^*$. As we noted in Remark \ref{rmk:Embedding_hypothesis_Huang_theorem_2-4-5}, the hypothesis in Theorem \ref{mainthm:Lojasiewicz-Simon_gradient_inequality2} that the embedding, $\sX \subset \sX^*$, is definite is implied by the assumption that $\sX \subset \sH$ is a continuous embedding into a Hilbert space. By Theorem \ref{mainthm:Lojasiewicz-Simon_gradient_inequality2}, if $x \in \sU$ obeys
\begin{equation}
\label{eq:Lojasiewicz-Simon_gradient_inequality_neighborhood_general_Hilbert_space}
\|x-x_\infty\|_\sX < \sigma,
\end{equation}
then
\begin{equation}
\label{eq:Lojasiewicz-Simon_gradient_inequality_analytic_function_Hilbert_space}
\|\sM(x)\|_{\sH} \geq Z|\sE(x) - \sE(x_\infty)|^\theta,
\end{equation}
as desired.
\end{rmk}

\begin{rmk}
If the Banach spaces are instead modeled as H\"older spaces, as in Simon \cite{Simon_1983}, a convenient choice of Banach and Hilbert spaces would be
\[
\sX = C^{k,\alpha}(X;V), \quad \tilde\sX = C^{k-m,\alpha}(X;V), \quad\text{and}\quad \sH = L^2(X;V),
\]
where $\alpha \in (0,1)$ and $k\geq m$, and these H\"older spaces are defined in the standard way \cite{Aubin_1998}.
\end{rmk}

It is of considerable interest to know when the optimal exponent $\theta = 1/2$ is achieved, since in that case one can prove (see \cite[Theorem 24.21]{Feehan_yang_mills_gradient_flow_v4}, for example) that a global solution, $u:[0,\infty)\to\sX$, to a gradient system governed by the {\L}ojasiewicz--Simon gradient inequality,
\[
\frac{du}{dt} = -\sE'(u(t)), \quad u(0) = u_0,
\]
has \emph{exponential} rather than mere power-law rate of convergence
to the critical point, $u_\infty$. One simple version of such an
optimal {\L}ojasiewicz--Simon gradient inequality is provided in Huang
\cite[Proposition 2.7.1]{Huang_2006} which, although interesting, its
hypotheses are very restrictive, a special case of Theorem
\ref{mainthm:Lojasiewicz-Simon_gradient_inequality_dualspace} where $\sX$ is a Hilbert space and the Hessian, $\sE''(x_\infty):\sX\to \sX^*$, is an invertible operator. See Haraux, Jendoubi, and Kavian \cite[Proposition 1.1]{Haraux_Jendoubi_Kavian_2003} for a similar result.

For the harmonic map energy function, a more interesting optimal {\L}ojasiewicz--Simon-type gradient inequality,
\[
\|\sE'(f)\|_{L^p(S^2)} \geq Z|\sE(f) - \sE(f_\infty)|^{1/2},
\]
has been obtained by Kwon \cite[Theorem 4.2]{KwonThesis} for maps $f:S^2\to N$, where $N$ is a closed Riemannian manifold and $f$ is close to a harmonic map $f_\infty$ in the sense that
\[
\|f - f_\infty\|_{W^{2,p}(S^2)} < \sigma,
\]
where $p$ is restricted to the range $1 < p \leq 2$, and $f_\infty$ is assumed to be \emph{integrable} in the sense of \cite[Definitions 4.3 or 4.4 and Proposition 4.1]{KwonThesis}. Her \cite[Proposition 4.1]{KwonThesis} quotes results of Simon \cite[pp. 270--272]{Simon_1985} and Adams and Simon \cite{Adams_Simon_1988}.

The \cite[Lemma 3.3]{Liu_Yang_2010} due to Liu and Yang is another example of an optimal {\L}ojasiewicz--Simon-type gradient inequality for the harmonic map energy function, but restricted to the setting of maps $f:S^2\to N$, where $N$ is a K{\"a}hler manifold of complex dimension $n \geq 1$ and nonnegative bisectional curvature, and the energy $\sE(f)$ is sufficiently small. The result of Liu and Yang generalizes that of Topping \cite[Lemma 1]{Topping_1997}, who assumes that $N = S^2$.

For the Yamabe function, an optimal {\L}ojasiewicz--Simon gradient inequality, has been obtained by Carlotto, Chodosh, and Rubinstein \cite{Carlotto_Chodosh_Rubinstein_2015} under the hypothesis that the critical point is \emph{integrable} in the sense of their
\cite[Definition 8]{Carlotto_Chodosh_Rubinstein_2015}, a condition that they observe in \cite[Lemma 9]{Carlotto_Chodosh_Rubinstein_2015} (quoting \cite[Lemma 1]{Adams_Simon_1988} due to Adams and Simon) is equivalent to a function on Euclidean space given by the \emph{Lyapunov--Schmidt reduction} of $\sE$ being constant on an open neighborhood of the critical point.

For the Yang-Mills energy function for connections on a principal $U(n)$-bundle over a closed Riemann surface, an optimal {\L}ojasiewicz--Simon gradient inequality, has been obtained by R\r{a}de \cite[Proposition 7.2]{Rade_1992} when the Yang-Mills connection is \emph{irreducible}.

Given the desirability of treating an energy function as a \emph{Morse function} whenever possible, for example in the spirit of Atiyah and Bott \cite{Atiyah_Bott_1983} for the Yang-Mills equation over Riemann surfaces, it is useful to rephrase these integrability conditions in the spirit of Morse theory.

\begin{defn}[Morse--Bott function]
\label{defn:Morse-Bott_function}
(See Austin and Braam \cite[Section 3.1]{Austin_Braam_1995}.)
Let $\sB$ be a smooth Banach manifold, $\sE: \sB \to \RR$ be a $C^2$ function, and $\Crit\sE := \{x\in\sB:\sE'(x) = 0\}$. A smooth submanifold $\sC \hookrightarrow \sB$ is called a \emph{nondegenerate critical submanifold of $\sE$} if $\sC \subset \Crit\sE$ and
\begin{equation}
\label{eq:Nondegenerate_critical_submanifold}
(T\sC)_x = \Ker \sE''(x), \quad\forall\,x\in \sC,
\end{equation}
where $\sE''(x):(T\sB)_x \to (T\sB)_x^*$ is the Hessian of $\sE$ at the point $x \in \sC$. One calls $\sE$ a \emph{Morse--Bott function} if its critical set $\Crit\sE$ consists of nondegenerate critical submanifolds.

We say that a $C^2$ function $\sE:\sB\to\RR$ is \emph{Morse--Bott at a point $x_\infty \in \Crit\sE$} if there is an open neighborhood $\sU\subset\sB$ of $x_\infty$ such that $\sU\cap\Crit\sE$ is a relatively open, smooth submanifold of $\sB$ and \eqref{eq:Nondegenerate_critical_submanifold} holds at $x_\infty$.
\end{defn}

Definition \ref{defn:Morse-Bott_function} is a restatement of definitions of a Morse--Bott function on a finite-dimensional manifold, but we omit the condition that $\sC$ be compact and connected as in Nicolaescu \cite[Definition 2.41]{Nicolaescu_morse_theory} or the condition that $\sC$ be compact in Bott \cite[Definition, p. 248]{Bott_1954}. Given a Morse--Bott energy function, we have the

\begin{mainthm}[Optimal {\L}ojasiewicz--Simon gradient inequality for Morse--Bott functions on Banach spaces]
\label{mainthm:Optimal_Lojasiewicz-Simon_gradient_inequality_Morse-Bott_energy_functional}
(See Feehan and Maridakis \cite[Theorem 4]{Feehan_Maridakis_Lojasiewicz-Simon_Banach}.)  
Assume the hypotheses of Theorem \ref{mainthm:Lojasiewicz-Simon_gradient_inequality} or of Theorem \ref{mainthm:Lojasiewicz-Simon_gradient_inequality2}. 
If $\sM$ is $C^1$ and $\sE$ is a Morse--Bott function at $x_\infty$ in the sense of Definition \ref{defn:Morse-Bott_function}, then the conclusions of Theorem \ref{mainthm:Lojasiewicz-Simon_gradient_inequality} or \ref{mainthm:Lojasiewicz-Simon_gradient_inequality2} hold with $\theta = 1/2$.
\end{mainthm}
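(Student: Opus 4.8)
The plan is to run the Lyapunov--Schmidt reduction that already underlies the proofs of Theorems \ref{mainthm:Lojasiewicz-Simon_gradient_inequality} and \ref{mainthm:Lojasiewicz-Simon_gradient_inequality2}, and to show that the Morse--Bott hypothesis forces the resulting finite-dimensional reduced function to be \emph{constant} near the origin, which is exactly the condition that produces the optimal exponent $\theta=1/2$. Since the hypotheses of one of those two theorems are assumed, we already have the inequality \eqref{eq:Lojasiewicz-Simon_gradient_inequality_analytic_functional_general} (respectively \eqref{eq:Lojasiewicz-Simon_gradient_inequality_analytic_functional_general2}) for \emph{some} $\theta\in[1/2,1)$; the entire content of the theorem is to improve the exponent to $1/2$ under the extra Morse--Bott assumption. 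The added regularity hypothesis that $\sM$ be $C^1$ is what guarantees that the reduction map below is $C^1$ and that the correspondence between critical sets is a $C^1$ diffeomorphism.

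First I would set up the reduction. Because $\sM'(x_\infty):\sX\to\tilde\sX$ is Fredholm of index zero, the kernel $K:=\Ker\sM'(x_\infty)\subset\sX$ is finite-dimensional; choose a closed complement $\sX=K\oplus\sX_1$ and a topological splitting of $\tilde\sX$ adapted to $\ran\sM'(x_\infty)$. Solving the range-component of the equation $\sM(\xi+\eta)=0$ for $\eta\in\sX_1$ in terms of $\xi\in K$ by the implicit function theorem yields a $C^1$ map $\gamma$ on a neighborhood of $0$ in $K$ with $\gamma(0)=0$, and one sets $\Gamma(\xi):=\sE(\xi+\gamma(\xi))$. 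The two standard features of this reduction are: (i) the map $\xi\mapsto\xi+\gamma(\xi)$ carries $\Crit\Gamma$ near $0$ bijectively onto $\Crit\sE$ near $x_\infty$; and (ii) writing a general point as $x=\xi+\eta$ with $\xi\in K$, $\eta\in\sX_1$, one has, schematically,
\[
|\sE(x)-\sE(x_\infty)| \lesssim |\Gamma(\xi)-\Gamma(0)| + \|\eta-\gamma(\xi)\|_\sX^2,
\qquad
\|\sM(x)\|_{\tilde\sX} \gtrsim \|\nabla\Gamma(\xi)\| + \|\eta-\gamma(\xi)\|_\sX,
\]
the second estimate coming from invertibility of $\sM'(x_\infty)$ on the complement $\sX_1$ and the first from Taylor expansion of $\sE$ about $\xi+\gamma(\xi)$ in the nondegenerate directions. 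Note also that $\Ker\sE''(x_\infty)=K$, since $\sE''(x_\infty)(v,w)=\langle v,\sM'(x_\infty)w\rangle_{\sX\times\sX^*}$ and the embedding $\sX\subset\sX^*$ is definite, so the null space of the Hessian, viewed as the operator $\sX\to\sX^*$, coincides with $\Ker\sM'(x_\infty)$.

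The key step is to invoke the Morse--Bott hypothesis. By Definition \ref{defn:Morse-Bott_function} there is a neighborhood in which $\Crit\sE$ is a smooth submanifold $\sC$ with $(T\sC)_{x_\infty}=\Ker\sE''(x_\infty)=K$. Composing with the $C^1$ diffeomorphism of feature (i) and projecting to $K$ along $\sX_1$, the critical set $\Crit\Gamma$ near $0$ is a submanifold of the finite-dimensional space $K$ whose tangent space at $0$ is all of $K$; hence $\Crit\Gamma$ contains an open neighborhood of $0$ in $K$. Therefore $\nabla\Gamma\equiv 0$ on a connected neighborhood of $0$, so $\Gamma$ is constant there and $\Gamma(\xi)=\Gamma(0)=\sE(x_\infty)$. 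This is precisely the integrability condition of Adams and Simon \cite{Adams_Simon_1988} phrased through the Lyapunov--Schmidt reduction, and it is the analytic heart of the argument.

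Finally I would feed $\Gamma\equiv\sE(x_\infty)$ back into the estimates of feature (ii). With $\Gamma(\xi)-\Gamma(0)=0$ and $\nabla\Gamma(\xi)=0$ they collapse to $|\sE(x)-\sE(x_\infty)|\lesssim\|\eta-\gamma(\xi)\|_\sX^2$ and $\|\sM(x)\|_{\tilde\sX}\gtrsim\|\eta-\gamma(\xi)\|_\sX$, whence
\[
\|\sM(x)\|_{\tilde\sX} \gtrsim \|\eta-\gamma(\xi)\|_\sX \gtrsim |\sE(x)-\sE(x_\infty)|^{1/2},
\]
which is \eqref{eq:Lojasiewicz-Simon_gradient_inequality_analytic_functional_general} with $\theta=1/2$; the case of Theorem \ref{mainthm:Lojasiewicz-Simon_gradient_inequality2} is identical after replacing $\|\cdot\|_{\tilde\sX}$ by $\|\cdot\|_{\tilde\sG}$ and using the extended operator $\sM_1(x_\infty):\sG\to\tilde\sG$ for the nondegenerate estimate. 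I expect the main obstacle to be the careful justification of feature (ii)---in particular, controlling $|\sE(x)-\sE(x_\infty)|$ by a clean quadratic term in the nondegenerate directions uniformly over the Łojasiewicz neighborhood---together with verifying that the $C^1$ correspondence between $\Crit\sE$ and $\Crit\Gamma$ genuinely transports the tangent-space identity $(T\sC)_{x_\infty}=K$ so as to force $\Crit\Gamma$ to be full-dimensional near the origin.
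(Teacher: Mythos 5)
This paper does not actually prove Theorem \ref{mainthm:Optimal_Lojasiewicz-Simon_gradient_inequality_Morse-Bott_energy_functional}: it is imported verbatim from the companion article \cite[Theorem 4]{Feehan_Maridakis_Lojasiewicz-Simon_Banach}, so there is no in-text proof here to compare against. Your Lyapunov--Schmidt argument --- the Morse--Bott hypothesis forces the reduced finite-dimensional function $\Gamma$ to be constant near the origin (because $\Crit\sE$ near $x_\infty$ is carried by the graph map onto a full-dimensional, hence open, subset of $\Ker\sM'(x_\infty)$), after which the quadratic Taylor bound in the nondegenerate directions and the lower bound $\|\sM(x)\|_{\tilde\sX}\gtrsim\|\eta-\gamma(\xi)\|_{\sX}$ (valid since $\sM$ is $C^1$ and $\sM'(x_\infty)$ is bounded below on the complement of its kernel) yield $\theta=1/2$ --- is the standard route and is essentially the strategy of the cited source. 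The one point to tighten is the schematic upper bound in your feature (ii): in general the cross term $\sE'(\xi+\gamma(\xi))(\eta-\gamma(\xi))$ need not vanish, but once you have established $\Gamma\equiv\sE(x_\infty)$ every point $\xi+\gamma(\xi)$ is a critical point of $\sE$, so that term is exactly zero and the clean quadratic estimate follows from $\sE$ being $C^2$; so this is not a genuine gap in the Morse--Bott case.
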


We refer to Feehan \cite[Appendix C]{Feehan_lojasiewicz_inequality_all_dimensions_morse-bott} for a discussion of integrability and the Morse--Bott condition for the harmonic map energy function, together with examples.

\subsection{{\L}ojasiewicz--Simon gradient inequality for the harmonic map energy function}
\label{subsec:Lojasiewicz-Simon_gradient_inequality_harmonic_map_functional}
Finally, we describe a consequence of Theorem \ref{mainthm:Lojasiewicz-Simon_gradient_inequality} for the harmonic map energy function. For background on harmonic maps, we refer to H{\'e}lein \cite{Helein_harmonic_maps}, Jost \cite{Jost_riemannian_geometry_geometric_analysis}, Simon \cite{Simon_1996}, Struwe \cite{Struwe_variational_methods}, and references cited therein. We begin with the

\begin{defn}[Harmonic map energy function]
\label{defn:Harmonic_map_energy_functional}
Let $(M,g)$ and $(N,h)$ be a pair of closed, smooth Riemannian manifolds. One defines the \emph{harmonic map energy function} by
\begin{equation}
\label{eq:Harmonic_map_energy_functional}
\sE_{g,h}(f)
:=
\frac{1}{2} \int_M |df|_{g,h}^2 \,d\vol_g,
\end{equation}
for smooth maps, $f:M\to N$, where $df:TM \to TN$ is the differential map.
\end{defn}

When clear from the context, we omit explicit mention of the Riemannian metrics $g$ on $M$ and $h$ on $N$ and write $\sE = \sE_{g,h}$. Although initially defined for smooth maps, the energy function $\sE$ in Definition \ref{defn:Harmonic_map_energy_functional}, extends to the case of Sobolev maps of class $W^{1,2}$. To define the gradient, $\sM = \sM_{g,h}$, of the energy function $\sE$ in \eqref{eq:Harmonic_map_energy_functional} with respect to the $L^2$ metric on $C^\infty(M;N)$, we first choose an isometric embedding, $(N,h) \hookrightarrow \RR^n$ for a sufficiently large $n$ (courtesy of the isometric embedding theorem due to Nash \cite{Nash_1956}), and recall that\footnote{Compare \cite[Equations (8.1.10) and (8.1.13)]{Jost_riemannian_geometry_geometric_analysis}, where Jost uses variations of $f$ of the form $\exp_{f}(tu)$.} by \cite[Equations (2.2)(i) and (ii)]{Simon_1996}
\begin{align*}
\left(u, \sM(f)\right)_{L^2(M,g)}
&:= \sE'(f)(u) = \left.\frac{d}{dt}\sE(\pi(f + tu))\right|_{t=0}
\\
&\,= \left(u, \Delta_g f\right)_{L^2(M,g)}
\\
&\,= \left(u, d\pi_h(f)\Delta_g f\right)_{L^2(M,g)},
\end{align*}
for all $u \in C^\infty(M;f^*TN)$, where $\pi_h$ is the nearest point projection onto $N$ from a normal tubular neighborhood and $d\pi_h(y):\RR^n \to T_yN$ is orthogonal projection, for all $y \in N$.  By \cite[Lemma 1.2.4]{Helein_harmonic_maps}, we have
\begin{equation}
\label{eq:Gradient_harmonic_map_operator}
\sM(f) = d\pi_h(f)\Delta_g f = \Delta_g f - A_h(f)(df,df),
\end{equation}
as in \cite[Equations (2.2)(iii) and (iv)]{Simon_1996}. Here, $A_h$ denotes the second fundamental form of the isometric embedding, $(N,h) \subset \RR^n$ and
\begin{equation}
\label{eq:Laplace-Beltrami_operator}
\Delta_g
:=
-\divg_g \grad_g
=
d^{*,g}d
=
-\frac{1}{\sqrt{\det g}} \frac{\partial}{\partial x^\beta}
\left(\sqrt{\det g}\, \frac{\partial f}{\partial x^\alpha} \right)
\end{equation}
denotes the Laplace-Beltrami operator for $(M,g)$ (with the opposite sign convention to that of \cite[Equations (1.14) and (1.33)]{Chavel}) acting on the scalar components $f^i$ of $f = (f^1,\ldots,f^n)$ and $\{x^\alpha\}$ denote local coordinates on $M$.

Given a smooth map $f:M\to N$, an isometric embedding, $(N,h) \subset \RR^n$, a non-negative integer $k$, and $p \in [1,\infty)$, we define the Sobolev norms,
\[
\|f\|_{W^{k,p}(M)} := \left(\sum_{i=1}^n \|f^i\|_{W^{k,p}(M)}^p\right)^{1/p},
\]
with
\[
\|f^i\|_{W^{k,p}(M)} := \left(\sum_{j=0}^k \int_M |(\nabla^g)^j f^i|^p \,d\vol_g\right)^{1/p},
\]
where $\nabla^g$ denotes the Levi-Civita connection on $TM$ and all associated bundles (that is, $T^*M$ and their tensor products). If $k=0$, then we denote $\|f\|_{W^{0,p}(M)} = \|f\|_{L^p(M)}$. For $p \in [1,\infty)$ and nonnegative integers $k$, we use \cite[Theorem 3.12]{AdamsFournier} (applied to $W^{k,p}(M;\RR^n)$ and noting that $M$ is a closed manifold) and Banach space duality to define
\[
W^{-k,p'}(M;\RR^n) := \left(W^{k,p}(M;\RR^n)\right)^*,
\]
where $p'\in (1,\infty)$ is the dual exponent defined by $1/p+1/p'=1$. Elements of the Banach space dual $(W^{k,p}(M;\RR^n))^*$ may be characterized via \cite[Section 3.10]{AdamsFournier} as distributions in the Schwartz space $\sD'(M;\RR^n)$ \cite[Section 1.57]{AdamsFournier}.

We note that if $(N,h)$ is real analytic, then the isometric embedding, $(N,h) \subset \RR^n$, may also be chosen to be analytic by the analytic isometric embedding theorem due to Nash \cite{Nash_1966}, with a simplified proof due to Greene and Jacobowitz \cite{Greene_Jacobowitz_1971}).

One says that a map $f \in W^{1,2}(M;N)$ is \emph{weakly harmonic} \cite[Definition 1.4.9]{Helein_harmonic_maps} if it is a critical point of the energy function \eqref{eq:Harmonic_map_energy_functional}, that is
\[
\sE'(f) = 0.
\]
A well-known result due to H\'elein \cite[Theorem 4.1.1]{Helein_harmonic_maps} tells us that if $M$ has dimension $d=2$, then $f \in C^\infty(M;N)$; for $d \geq 3$, regularity results are far more limited --- see, for example, \cite[Theorem 4.3.1]{Helein_harmonic_maps} due to Bethuel.

The statement of the forthcoming Theorem \ref{mainthm:Lojasiewicz-Simon_gradient_inequality_energy_functional_Riemannian_manifolds} includes the most delicate dimension for the source Riemannian manifold, $(M,g)$, namely the case where $M$ has dimension $d=2$. Following the landmark articles by Sacks and Uhlenbeck \cite{Sacks_Uhlenbeck_1981, Sacks_Uhlenbeck_1982}, the case where the domain manifold $M$ has dimension two is well-known to be critical.

\begin{mainthm}[{\L}ojasiewicz--Simon $W^{k-2,p}$ gradient inequality for the energy function for maps between pairs of Riemannian manifolds]
\label{mainthm:Lojasiewicz-Simon_gradient_inequality_energy_functional_Riemannian_manifolds}
Let $d\geq 2$ and $k \geq 1$ be integers and $p\in (1,\infty)$ be such that $kp > d$. Let $(M,g)$ and $(N,h)$ be closed, smooth Riemannian manifolds, with $M$ of dimension $d$. If $(N,h)$ is real analytic (respectively, $C^\infty$) and $f\in W^{k,p}(M;N)$, then the gradient map for the energy function, $\sE:W^{k,p}(M; N)\to\RR$, in \eqref{eq:Harmonic_map_energy_functional},
\[
W^{k,p}(M; N) \ni f \mapsto \sM(f) \in W^{k-2,p}(M; f^*TN) \subset W^{k-2,p}(M; \RR^n),
\]
is a real analytic (respectively, $C^\infty$) map of Banach spaces. If $(N,h)$ is real analytic and $f_\infty \in W^{k,p}(M; N)$ is a weakly harmonic map, then there are positive constants $Z \in (0, \infty)$, and $\sigma \in (0,1]$, and $\theta \in [1/2,1)$, depending on $f_\infty$, $g$, $h$, $k$, $p$, with the following significance. If $f\in W^{k,p}(M;N)$ obeys the $W^{k,p}$ \emph{{\L}ojasiewicz--Simon neighborhood} condition,
\begin{equation}
\label{eq:Lojasiewicz-Simon_gradient_inequality_harmonic_map_neighborhood_Riemannian_manifold}
\|f - f_\infty\|_{W^{k,p}(M)} < \sigma,
\end{equation}
then the harmonic map energy function
\eqref{eq:Harmonic_map_energy_functional} obeys the
\emph{{\L}ojasiewicz--Simon gradient inequality},
\begin{equation}
\label{eq:Lojasiewicz-Simon_gradient_inequality_harmonic_map_energy_functional_Riemannian_manifold}
\|\sM(f)\|_{W^{k-2,p}(M;f^*TN)}
\geq
Z|\sE(f) - \sE(f_\infty)|^\theta.
\end{equation}
Furthermore, if the hypothesis that $(N,h)$ is analytic is replaced by the condition that $\sE$ is Morse--Bott at $f_\infty$, then \eqref{eq:Lojasiewicz-Simon_gradient_inequality_harmonic_map_energy_functional_Riemannian_manifold} holds with the optimal exponent $\theta=1/2$.
\end{mainthm}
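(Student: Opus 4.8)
The plan is to apply the abstract inequality, Theorem \ref{mainthm:Lojasiewicz-Simon_gradient_inequality}, after transporting the energy function to a Banach chart centered at the critical point $f_\infty$. Fix an isometric embedding $(N,h)\subset\RR^n$, which may be taken real analytic when $(N,h)$ is real analytic. I would set
\[
\sX = W^{k,p}(M; f_\infty^*TN), \quad \tilde\sX = W^{k-2,p}(M; f_\infty^*TN), \quad \sH = L^2(M; f_\infty^*TN),
\]
and verify the embedding chain $\sX\subset\tilde\sX\subset\sX^*$ required by the abstract theorem: the first embedding is the usual loss of derivatives, while $\tilde\sX\subset\sX^*=W^{-k,p'}$ follows from the $L^2$ pairing together with the Sobolev embeddings available because $kp>d$ (in the borderline case $k=1$ one has $p>d\geq 2$, so $p'<p$ and $W^{1,p}\subset W^{1,p'}$, whence $W^{-1,p}\subset W^{-1,p'}$). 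Definiteness of $\sX\subset\sX^*$ is immediate from the continuous embedding $\sX\subset\sH$ and Remark \ref{rmk:Embedding_hypothesis_Huang_theorem_2-4-5}.

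Next I would build a real analytic chart. Because $kp>d$, the space $W^{k,p}(M;\RR^n)$ embeds continuously into $C^0(M;\RR^n)$, so for small $u$ the image $f_\infty+u$ stays in a tubular neighborhood of $N$ and the nearest-point projection $\pi_h$ applies. Using $\pi_h$ I would define the chart
\[
\Phi(u) := \pi_h(f_\infty + u), \qquad u \in \sX,\ \|u\|_\sX<\delta,
\]
(equivalently $x\mapsto \exp^N_{f_\infty(x)}(u(x))$), which carries a $W^{k,p}$-ball about $0$ diffeomorphically onto a $W^{k,p}$-neighborhood of $f_\infty$ in $W^{k,p}(M;N)$ and is real analytic exactly when $\pi_h$ (equivalently $N$) is. Pulling back gives $\hat\sE:=\sE\circ\Phi$ with $L^2$-gradient $\hat\sM$, and the theorem reduces to the {\L}ojasiewicz--Simon inequality for $\hat\sE$ at the origin, since $\Phi$ is bi-Lipschitz between the relevant norms and intertwines the neighborhood condition \eqref{eq:Lojasiewicz-Simon_gradient_inequality_harmonic_map_neighborhood_Riemannian_manifold} with the gradient estimate \eqref{eq:Lojasiewicz-Simon_gradient_inequality_harmonic_map_energy_functional_Riemannian_manifold}.

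The decisive step, which I expect to be the main obstacle, is that the gradient map $\hat\sM:\sU\subset\sX\to\tilde\sX$ is real analytic (respectively $C^\infty$). Starting from $\sM(f)=\Delta_g f - A_h(f)(df,df)$ of \eqref{eq:Gradient_harmonic_map_operator}, the linear term $f\mapsto\Delta_g f$ is bounded $W^{k,p}\to W^{k-2,p}$ and hence analytic, and the nonlinear term factors through the differential $f\mapsto df$ (bounded linear $W^{k,p}\to W^{k-1,p}$), the superposition operator $f\mapsto A_h\circ f$, and pointwise multiplication. Two facts must be established. First, because $kp>d$ makes $W^{k,p}(M)$ a Banach algebra continuously embedded in $C^0(M)$, the Nemytskii operator $u\mapsto A_h\circ u$ induced by the real analytic map $A_h$ is itself real analytic between the appropriate Sobolev spaces; one expands $A_h$ in a locally convergent power series and controls the multilinear terms via the algebra property. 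Second, one must land the product $A_h(f)(df,df)$, a triple product of elements of $W^{k,p}$ and $W^{k-1,p}$, in $W^{k-2,p}$; since only $kp>d$ is assumed (and not $(k-1)p>d$ or $(k-2)p>d$), this requires sharp Sobolev multiplication estimates rather than the naive algebra property, and it is here that the critical dimension $d=2$ demands the most care. This is precisely the real analytic Banach manifold structure flagged as the novel ingredient in the introduction.

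Finally, the Hessian $\hat\sM'(0):\sX\to\tilde\sX$ is the Jacobi (second-variation) operator at $f_\infty$, a formally self-adjoint, second-order elliptic operator on sections of $f_\infty^*TN$ over the closed manifold $M$; elliptic theory makes it Fredholm as a map $W^{k,p}\to W^{k-2,p}$, and formal self-adjointness together with reflexivity forces index zero, as in Remark \ref{rmk:Index_Fredholm_Hessian_operator_reflexive_Banach_space}. With every hypothesis of Theorem \ref{mainthm:Lojasiewicz-Simon_gradient_inequality} verified, the inequality \eqref{eq:Lojasiewicz-Simon_gradient_inequality_harmonic_map_energy_functional_Riemannian_manifold} holds for $\hat\sE$ and transports back through $\Phi$. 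For the last assertion, if $\sE$ is Morse--Bott at $f_\infty$ (with $N$ merely $C^\infty$, so $\hat\sM$ is $C^1$), I would instead invoke Theorem \ref{mainthm:Optimal_Lojasiewicz-Simon_gradient_inequality_Morse-Bott_energy_functional}, after checking that the Morse--Bott condition at $f_\infty$ transfers to the origin under $\Phi$, to obtain the optimal exponent $\theta=1/2$.
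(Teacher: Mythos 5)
Your overall strategy is the paper's: pull back $\sE$ through the inverse chart $\Phi_{f_\infty}(u)=\pi_h(f_\infty+u)$, take $\sX = W^{k,p}(M;f_\infty^*TN)$, $\tilde\sX = W^{k-2,p}(M;f_\infty^*TN)$, verify the embedding chain via $L^2$, and apply Theorem \ref{mainthm:Lojasiewicz-Simon_gradient_inequality} (and Theorem \ref{mainthm:Optimal_Lojasiewicz-Simon_gradient_inequality_Morse-Bott_energy_functional} in the Morse--Bott case). One genuine difference of route: for analyticity of the gradient map you propose to expand $A_h(f)(df,df)$ directly via a power series for the Nemytskii operator and sharp multiplication estimates. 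The paper instead proves analyticity of the scalar function $\sE$ by a power-series argument (Proposition \ref{prop:analyticharmd}), proves only \emph{smoothness} of $\sM$ into $W^{k-2,p}$ (Lemma \ref{lem:Gradient_harmonic_map_operator_smooth}), and then upgrades to analyticity by the soft functional-analytic Lemma \ref{lem:subanalytic} (a $C^\infty$ map that is analytic into a larger target is analytic into the smaller one), using the embedding $W^{k-2,p}\subset (W^{k,p})^*$ of Lemma \ref{lem:Continuous_embedding_Wk-2p_into_Wkpdual}. The paper acknowledges in a footnote that your direct route is viable, but its detour avoids exactly the triple-product estimates you flag as the hard point; if you go the direct way you must actually supply those multiplication theorems (the paper's Appendix \ref{sec:Continuity_Sobolev_multiplication_maps} shows how delicate the borderline cases are).

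The one step you under-justify is the Fredholm and index-zero property of the Hessian operator. You write that ``elliptic theory makes it Fredholm'' and that ``formal self-adjointness together with reflexivity forces index zero, as in Remark \ref{rmk:Index_Fredholm_Hessian_operator_reflexive_Banach_space}.'' Two objections. First, $f_\infty$ is only a $W^{k,p}$ map, so the Jacobi operator $\sM'(f_\infty)v = \Delta_g v - 2A_h(f_\infty)(df_\infty,dv) - (dA_h)(v)(df_\infty,df_\infty)$ has coefficients of merely Sobolev regularity (e.g.\ $df_\infty \in W^{k-1,p}$), and the standard Fredholm results for elliptic operators on $W^{k,p}\to W^{k-2,p}$ assume $C^\infty$ coefficients; the paper explicitly remarks that its argument is what \emph{extends} those results to Sobolev coefficients. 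Second, Remark \ref{rmk:Index_Fredholm_Hessian_operator_reflexive_Banach_space} concerns the symmetric operator $\sE''(x_\infty):\sX\to\sX^*$, not the Hessian operator $\sM'(x_\infty):\sX\to\tilde\sX$ with $\tilde\sX\neq\sX^*$ (which is the relevant operator for Theorem \ref{mainthm:Lojasiewicz-Simon_gradient_inequality} when $p\neq 2$), so symmetry does not directly give index zero there. The paper closes this gap in Proposition \ref{prop:fredharmd}: it first treats $\tilde f\in C^\infty(M;N)$, where the smooth-coefficient theory applies, then uses density of $C^\infty(M;N)$ in $W^{k,p}(M;N)$, the conjugating isomorphisms $d\pi_h(f)$ and $d\pi_h(\tilde f)$ of Lemma \ref{lem:Isomorphism_Sobolev_spaces_sections_two_nearby_maps}, continuity of $f\mapsto\sM'(f)$ in the operator norm (Claims \ref{claim:Differential_nearest-point_projection_continuous_function_of_f_in_Wkp_maps}--\ref{claim:Hessian_energy_continuous_function_of_f_in_Wkp_maps}), and openness of the set of Fredholm operators of a given index (Theorem \ref{thm:Openness_set_Fredholm_operators}). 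Some such perturbation-from-the-smooth-case argument, or an explicit compactness argument for the lower-order Sobolev-coefficient terms, is needed to make your Fredholm step rigorous.
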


\begin{rmk}[On the hypotheses of Theorem \ref{mainthm:Lojasiewicz-Simon_gradient_inequality_energy_functional_Riemannian_manifolds}]
When $k=d$ and $p=1$, then $W^{d,1}(M;\RR) \subset C(M;\RR)$ is a continuous embedding by \cite[Theorem 4.12]{AdamsFournier} and $W^{d,1}(M;\RR)$ is a Banach algebra by \cite[Theorem 4.39]{AdamsFournier}. In particular, $W^{d,1}(M; N)$ is a real analytic Banach manifold by Proposition \ref{prop:Banach_manifold} and the harmonic map energy function, $\sE: W^{d,1}(M; N) \to \RR$, is real analytic by Proposition \ref{prop:analyticharmd}. However, $\sM'(f_\infty):W^{d,1}(M; f_\infty^*TN) \to W^{d-2,1}(M; f_\infty^*TN)$ need not be a Fredholm operator. Indeed, when $d=2$, failure of the Fredholm property for $\sM'(f_\infty):W^{2,1}(M; f_\infty^*TN) \to L^1(M; f_\infty^*TN)$ (unless $L^1(M; f_\infty^*TN)$ is replaced, for example, by a Hardy $H^1$ space) can be inferred from calculations described by H\'elein \cite{Helein_harmonic_maps}.
\end{rmk}

\begin{rmk}[Previous versions of the {\L}ojasiewicz--Simon gradient inequality for the harmonic map energy function]
Topping \cite[Lemma 1]{Topping_1997} proved a {\L}ojasiewicz-type gradient inequality for maps, $f:S^2 \to S^2$, with small energy, with the latter criterion replacing the usual small $C^{2,\alpha}(M;\RR^n)$ norm criterion of Simon for the difference between a map and a critical point \cite[Theorem 3]{Simon_1983}. Simon uses a $C^2(M;\RR^n)$ norm to measure distance between maps, $f:M \to N$,  in \cite[Equation (4.27)]{Simon_1985}. Topping's result is generalized by Liu and Yang in \cite[Lemma 3.3]{Liu_Yang_2010}. Kwon \cite[Theorem 4.2]{KwonThesis} obtains a {\L}ojasiewicz-type gradient inequality for maps, $f:S^2 \to N$, that are $W^{2,p}(S^2;\RR^n)$-close to a harmonic map, with $1 < p \leq 2$.
\end{rmk}

Theorem \ref{mainthm:Lojasiewicz-Simon_gradient_inequality2} leads in turn to the following refinement of Theorem \ref{mainthm:Lojasiewicz-Simon_gradient_inequality_energy_functional_Riemannian_manifolds}.

\begin{maincor}[{\L}ojasiewicz--Simon $L^2$ gradient inequality for the energy function for maps between pairs of Riemannian manifolds]
\label{maincor:Lojasiewicz-Simon_gradient_inequality_energy_functional_Riemannian_manifolds_L2}
Assume the hypotheses of Theorem \ref{mainthm:Lojasiewicz-Simon_gradient_inequality_energy_functional_Riemannian_manifolds} and, in addition, require that $k$ and $p$ obey
\begin{enumerate}
\item \label{item:maincor_LS_grad_ineq_L2_d=2_k=1}
$d=2$ and $k=1$ and $2 < p < \infty$; or

\item \label{item:maincor_LS_grad_ineq_L2_d=3_k=1}
$d=3$ and $k=1$ and $3 < p \leq 6$; or

\item \label{item:maincor_LS_grad_ineq_L2_dgeq2_kgeq2}
$d\geq 2$ and $k\geq 2$ and $2 \leq p < \infty$ with $kp > d$.
\end{enumerate}
If $f\in W^{k,p}(M;N)$ obeys the $W^{k,p}$ \emph{{\L}ojasiewicz--Simon neighborhood} condition \eqref{eq:Lojasiewicz-Simon_gradient_inequality_harmonic_map_neighborhood_Riemannian_manifold}, then the harmonic map energy function \eqref{eq:Harmonic_map_energy_functional} obeys the \emph{{\L}ojasiewicz--Simon $L^2$ gradient inequality},
\begin{equation}
\label{eq:Lojasiewicz-Simon_gradient_inequality_harmonic_map_energy_functional_Riemannian_manifold_L2}
\|\sM(f)\|_{L^2(M;f^*TN)}
\geq
Z|\sE(f) - \sE(f_\infty)|^\theta.
\end{equation}
Furthermore, if the hypothesis that $(N,h)$ is analytic is replaced by the condition that $\sE$ is Morse--Bott at $f_\infty$, then \eqref{eq:Lojasiewicz-Simon_gradient_inequality_harmonic_map_energy_functional_Riemannian_manifold_L2} holds with the optimal exponent $\theta=1/2$.
\end{maincor}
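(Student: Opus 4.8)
The plan is to derive the $L^2$ gradient inequality \eqref{eq:Lojasiewicz-Simon_gradient_inequality_harmonic_map_energy_functional_Riemannian_manifold_L2} from the abstract machinery already established, with the Hilbert space $\sH=L^2(M;f_\infty^*TN)$ as the target for the gradient map. Throughout I work in the Banach chart centered at $f_\infty$ provided by the proof of Theorem \ref{mainthm:Lojasiewicz-Simon_gradient_inequality_energy_functional_Riemannian_manifolds}, writing $\sX=W^{k,p}(M;f_\infty^*TN)$ and $\tilde\sX=W^{k-2,p}(M;f_\infty^*TN)$; since the fiber metric is induced by the isometric embedding $(N,h)\subset\RR^n$, every $L^2$ norm below coincides with the ambient $L^2(M;\RR^n)$ norm, so the distinction between $f^*TN$ and $f_\infty^*TN$ is immaterial for these norms. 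By \eqref{eq:Gradient_harmonic_map_operator} the Hessian $\sM'(f_\infty):\sX\to\tilde\sX$ is a second-order elliptic operator with principal part the covariant Laplacian $\Delta_g$, and, as in Theorem \ref{mainthm:Lojasiewicz-Simon_gradient_inequality_energy_functional_Riemannian_manifolds}, is a Fredholm operator of index zero. The split between the cases $k=1$ and $k\geq2$ reflects whether $L^2$ embeds continuously into the codomain $\tilde\sX=W^{k-2,p}$ of $\sM$, and this is exactly what decides which abstract theorem to invoke.

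In the cases \eqref{item:maincor_LS_grad_ineq_L2_d=2_k=1} and \eqref{item:maincor_LS_grad_ineq_L2_d=3_k=1}, where $k=1$ and $\tilde\sX=W^{-1,p}(M;f_\infty^*TN)$, I first establish the continuous embedding $L^2(M)\subset W^{-1,p}(M)$, which follows by Banach-space duality, since $W^{-1,p}=(W^{1,p'})^*$ with $1/p+1/p'=1$, from the Sobolev embedding $W^{1,p'}(M)\subset L^2(M)$. For $p'<d$ one has $W^{1,p'}(M)\subset L^{q}(M)$ with $1/q=1/p'-1/d$, and $q\geq2$ holds automatically when $d=2$ for every $p>2$, whereas when $d=3$ it holds exactly for $p'\geq 6/5$, that is $p\leq6$ --- which is the source of the upper bound in item \eqref{item:maincor_LS_grad_ineq_L2_d=3_k=1}. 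Granting this embedding with constant $C$, the $W^{k-2,p}$ gradient inequality \eqref{eq:Lojasiewicz-Simon_gradient_inequality_harmonic_map_energy_functional_Riemannian_manifold} produced by Theorem \ref{mainthm:Lojasiewicz-Simon_gradient_inequality_energy_functional_Riemannian_manifolds} gives
\[
C\,\|\sM(f)\|_{L^2(M;f^*TN)} \geq \|\sM(f)\|_{W^{-1,p}(M;f^*TN)} \geq Z\,|\sE(f)-\sE(f_\infty)|^\theta
\]
whenever $\sM(f)\in L^2$, while \eqref{eq:Lojasiewicz-Simon_gradient_inequality_harmonic_map_energy_functional_Riemannian_manifold_L2} holds trivially otherwise; absorbing $C$ into $Z$ finishes these cases.

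In the case \eqref{item:maincor_LS_grad_ineq_L2_dgeq2_kgeq2}, where $k\geq2$, the embedding $L^2\subset W^{k-2,p}$ fails as soon as $k\geq3$ or $p>2$, so I instead apply Theorem \ref{mainthm:Lojasiewicz-Simon_gradient_inequality2} by way of Remark \ref{rmk:Lojasiewicz-Simon_gradient_inequality2_Hilbert}, taking
\[
\sG=W^{2,2}(M;f_\infty^*TN) \quad\text{and}\quad \tilde\sG=\sH=L^2(M;f_\infty^*TN).
\]
Since $k\geq2$, $p\geq2$, and $M$ is closed, the inclusions $\sX=W^{k,p}\subset W^{2,2}=\sG$ and $\tilde\sX=W^{k-2,p}\subset L^p\subset L^2=\tilde\sG$ are continuous, the remaining links $\sG\subset\tilde\sG$ and $\tilde\sG=L^2\subset W^{-k,p'}=\sX^*$ are immediate, and the two resulting embeddings of $\sX$ into $\tilde\sG$ coincide with the natural inclusion $W^{k,p}\subset L^2$, so the compatibility hypothesis of Theorem \ref{mainthm:Lojasiewicz-Simon_gradient_inequality2} is met. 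The operator $\sM_1(f_\infty):\sG\to\tilde\sG$ is again the second-order elliptic operator with leading part $\Delta_g$, hence Fredholm of index zero on the closed manifold $M$, and Remark \ref{rmk:Lojasiewicz-Simon_gradient_inequality2_Hilbert} then yields \eqref{eq:Lojasiewicz-Simon_gradient_inequality_harmonic_map_energy_functional_Riemannian_manifold_L2}.

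The main obstacle, and the only genuinely analytic point, is to verify for the case $k\geq2$ that $\sM'(f):\sX\to\tilde\sX$ extends, for every $f$ in the chart, to a bounded operator $\sM_1(f):W^{2,2}\to L^2$ depending continuously on $f$, as Theorem \ref{mainthm:Lojasiewicz-Simon_gradient_inequality2} demands. Linearizing \eqref{eq:Gradient_harmonic_map_operator} gives
\[
\sM'(f)u = \Delta_g u - 2A_h(f)(du,df) - (DA_h)(f)(u)(df,df),
\]
and the task is to show that the first- and zeroth-order terms, whose coefficients involve $df\in W^{k-1,p}$ and $A_h(f)$ together with its first derivative, carry $W^{2,2}$ continuously into $L^2$, with operator norms bounded continuously in terms of $\|f\|_{W^{k,p}}$. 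I expect this to rest on Sobolev multiplication and H\"older estimates that are available precisely in the range $k\geq2$, $p\geq2$, $kp>d$, and to be the technical heart of the proof; the Fredholm and embedding assertions above are standard. Finally, when $(N,h)$ is only $C^\infty$ but $\sE$ is Morse--Bott at $f_\infty$, Theorem \ref{mainthm:Optimal_Lojasiewicz-Simon_gradient_inequality_Morse-Bott_energy_functional} applies to the same data and improves the exponent to $\theta=1/2$ in every case, the embedding step of the cases $k=1$ preserving the value of $\theta$.
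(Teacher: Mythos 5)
Your overall route coincides with the paper's: for Items \eqref{item:maincor_LS_grad_ineq_L2_d=2_k=1} and \eqref{item:maincor_LS_grad_ineq_L2_d=3_k=1} you deduce the continuous embedding $L^2(M;\RR)\subset W^{-1,p}(M;\RR)$ by dualizing the Sobolev embedding $W^{1,p'}(M;\RR)\subset L^2(M;\RR)$ (with exactly the exponent arithmetic that produces the restriction $p\leq 6$ when $d=3$) and then upgrade the $W^{-1,p}$ inequality of Theorem \ref{mainthm:Lojasiewicz-Simon_gradient_inequality_energy_functional_Riemannian_manifolds}, and for Item \eqref{item:maincor_LS_grad_ineq_L2_dgeq2_kgeq2} you invoke Theorem \ref{mainthm:Lojasiewicz-Simon_gradient_inequality2} with the same choices $\sG=W^{2,2}(M;f_\infty^*TN)$ and $\tilde\sG=L^2(M;f_\infty^*TN)$ that the paper makes. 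The first two items are essentially complete as you present them.

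For Item \eqref{item:maincor_LS_grad_ineq_L2_dgeq2_kgeq2}, however, you explicitly defer the two hypotheses of Theorem \ref{mainthm:Lojasiewicz-Simon_gradient_inequality2} that carry the real content, and this is a genuine gap rather than a routine verification. First, the continuity of $f\mapsto\sM_1(f)\in\sL\bigl(W^{2,2}(M;\RR^n),L^2(M;\RR^n)\bigr)$ is not a soft consequence of H\"older's inequality: it requires the Sobolev multiplication maps $W^{k,p}\times L^2\to L^2$, $W^{k-2,p}\times W^{2,2}\to L^2$, and $W^{k,p}\times W^{2,2}\to W^{2,2}$ of Lemma \ref{lem:Continuity_Sobolev_multiplication_maps}, whose proof occupies Appendix \ref{sec:Continuity_Sobolev_multiplication_maps} and breaks into several cases; in particular the borderline dimension $d=4$ falls outside the direct scope of Palais's multiplication theorem and must be handled separately (via \cite[Theorem 9.5 (2)]{PalaisFoundationGlobal} and ad hoc estimates). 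Your phrase ``I expect this to rest on Sobolev multiplication and H\"older estimates'' names the right tool but does not supply the argument, and the zeroth-order coefficient $(dA_h)(f)(\cdot)(df,df)$, which lies only in $W^{k-2,p}$, is precisely where the delicate case $W^{k-2,p}\times W^{2,2}\to L^2$ is needed. Second, your assertion that $\sM_1(f_\infty):W^{2,2}\to L^2$ is ``Fredholm of index zero on the closed manifold $M$'' because it is elliptic with leading part $\Delta_g$ overlooks that $f_\infty$ is only a $W^{k,p}$ map, so the lower-order coefficients are merely Sobolev rather than smooth; the paper's Claim \ref{claim:fredharmd_W22_to_L2} establishes this by approximating $f_\infty$ by a smooth map, conjugating by the isomorphisms $d\pi_h$ of Lemma \ref{lem:L2is}, and appealing to the openness of the set of index-zero Fredholm operators (Theorem \ref{thm:Openness_set_Fredholm_operators}). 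Both verifications must be carried out for the proof to close.
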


\begin{rmk}[Application to proof of Simon's $L^2$ gradient inequality for the energy function for maps between pairs of Riemannian manifolds]
Simon's statement \cite[Theorem 3]{Simon_1983}, \cite[Equation (4.27)]{Simon_1985} of the $L^2$ gradient inequality for the energy function for maps from a closed Riemannian manifold into a closed, real analytic Riemannian manifold is identical to that of Corollary \ref{maincor:Lojasiewicz-Simon_gradient_inequality_energy_functional_Riemannian_manifolds_L2}, except that it applies to $C^{2,\lambda}$ (rather than $W^{k,p}$) maps (for $\lambda \in (0,1)$) and the condition \eqref{eq:Lojasiewicz-Simon_gradient_inequality_harmonic_map_neighborhood_Riemannian_manifold} is replaced by
\[
\|f - f_\infty\|_{C^{2,\lambda}(M;\RR^n)} < \sigma,
\]
Simon's \cite[Theorem 3]{Simon_1983}, \cite[Equation (4.27)]{Simon_1985} follows immediately from Corollary \ref{maincor:Lojasiewicz-Simon_gradient_inequality_energy_functional_Riemannian_manifolds_L2} and the Sobolev Embedding \cite[Theorem 4.12]{AdamsFournier} by choosing $k\geq 1$ and $p \in (1,\infty)$ with $kp>d$ so that there is a continuous Sobolev embedding, $C^{2,\lambda}(M;\RR) \subset W^{k,p}(M;\RR)$ and thus
\[
\|f - f_\infty\|_{W^{k,p}(M;\RR^n)} \leq C\|f - f_\infty\|_{C^{2,\lambda}(M;\RR^n)},
\]
for some constant, $C = C(g,h,k,p,\lambda) \in [1,\infty)$.
\end{rmk}

\begin{rmk}[Exclusion of the case $d\geq 4$ and $k=1$ in Corollary \ref{maincor:Lojasiewicz-Simon_gradient_inequality_energy_functional_Riemannian_manifolds_L2}]
\label{rmk:Lojasiewicz-Simon_gradient_inequality_energy_functional_manifolds_L2_exclude_d_geq_4}
The proofs of Items \eqref{item:maincor_LS_grad_ineq_L2_d=2_k=1} and \eqref{item:maincor_LS_grad_ineq_L2_d=3_k=1} require that $p$ obey $(p')^* = dp/(d(p-1)-p) \geq 2$, namely $dp \geq 2d(p-1) - 2p = 2dp-2d-2p$, or equivalently, $dp \leq 2d+2p$, or equivalently, $p(d-2) \leq 2d$, that is, $p \leq 2d/(d-2)$. But the condition $kp>d$ for $k=1$ implies $p>d$ and so $d$ must obey $d < 2d/(d-2)$, that is $d-2 < 2$ or $d<4$.
\end{rmk}

\begin{rmk}[Relaxing the condition $p \geq 2$ in Item \eqref{item:maincor_LS_grad_ineq_L2_dgeq2_kgeq2} of Corollary \ref{maincor:Lojasiewicz-Simon_gradient_inequality_energy_functional_Riemannian_manifolds_L2}]
When $k \geq 3$, the condition $p\geq 2$ in Item \eqref{item:maincor_LS_grad_ineq_L2_dgeq2_kgeq2} of Corollary \ref{maincor:Lojasiewicz-Simon_gradient_inequality_energy_functional_Riemannian_manifolds_L2} can be relaxed using the Sobolev embedding \cite[Theorem 4.12]{AdamsFournier}.
\end{rmk}

\subsection{Outline of the article}
\label{subsec:Outline}
In Section \ref{sec:Lojasiewicz-Simon_gradient_inequality_harmonic_map_energy_functional}, we establish the {\L}ojasiewicz--Simon gradient inequality for the harmonic map energy function, proving Theorem \ref{mainthm:Lojasiewicz-Simon_gradient_inequality_energy_functional_Riemannian_manifolds}. We refer the reader to \cite[Appendix C]{Feehan_lojasiewicz_inequality_all_dimensions_morse-bott} for a review of the relationship between the Morse--Bott property and the integrability in the setting of harmonic maps. Lastly, \cite[Appendix
D]{Feehan_Maridakis_Lojasiewicz-Simon_coupled_Yang-Mills_v6} includes an explanation of why Theorem \ref{mainthm:Lojasiewicz-Simon_gradient_inequality2} is so useful in applications to questions of global existence and convergence of gradient flows for energy functions on Banach spaces under the validity of the {\L}ojasiewicz--Simon gradient inequality.

We refer the reader to \cite[Appendix A]{Feehan_Maridakis_Lojasiewicz-Simon_harmonic_maps_v5} for a review of the relationship between the Morse--Bott property and the integrability in the setting of harmonic maps. In \cite[Appendix B]{Feehan_Maridakis_Lojasiewicz-Simon_harmonic_maps_v5}, we give a review of Huang's \cite[Theorem 2.4.2 (i)]{Huang_2006} for the {\L}ojasiewcz--Simon gradient inequality for analytic functions on Banach spaces. Next, \cite[Appendix D]{Feehan_Maridakis_Lojasiewicz-Simon_harmonic_maps_v5} provides a few elementary observations from linear functional analysis that illuminate the hypotheses of Theorem \ref{mainthm:Lojasiewicz-Simon_gradient_inequality}.

\subsection{Notation and conventions}
\label{subsec:Notation}
For the notation of function spaces, we follow Adams and Fournier \cite{AdamsFournier}, and for functional analysis, Brezis \cite{Brezis} and Rudin \cite{Rudin}. We let $\NN:=\left\{0,1,2,3,\ldots\right\}$ denote the set of non-negative integers. We use $C=C(*,\ldots,*)$ to denote a constant which depends at most on the quantities appearing on the parentheses. In a given context, a constant denoted by $C$ may have different values depending on the same set of arguments and may increase from one inequality to the next. If $\sX, \sY$ is a pair of Banach spaces, then $\sL(\sX,\sY)$ denotes the Banach space of all continuous linear operators from $\sX$ to $\sY$. We denote the continuous dual space of $\sX$ by $\sX^* = \sL(\sX,\RR)$. We write $\alpha(x) = \langle x, \alpha \rangle_{\sX\times\sX^*}$ for the pairing between $\sX$ and its dual space, where $x \in \sX$ and $\alpha \in \sX^*$. If $T \in \sL(\sX, \sY)$, then its adjoint is denoted by $T^* \in \sL(\sY^*,\sX^*)$, where $(T^*\beta)(x) := \beta(Tx)$ for all $x \in \sX$ and $\beta \in \sY^*$.

\subsection{Acknowledgments}
\label{subsec:Acknowledgments}
Paul Feehan is very grateful to the Max Planck Institute for Mathematics, Bonn, and the Institute for Advanced Study, Princeton, for their support during the preparation of this article. He would like to thank Peter Tak{\'a}{\v{c}} for many helpful conversations regarding the {\L}ojasiewicz--Simon gradient inequality, for explaining his proof of \cite[Proposition 6.1]{Feireisl_Takac_2001} and how it can be generalized as described in this article, and for his kindness when hosting his visit to the Universit{\"a}t R{\"o}stock. He would also like to thank Brendan Owens for several useful conversations and his generosity when hosting his visit to the University of Glasgow. He thanks Haim Brezis for helpful comments on $L\log L$ spaces, Alessandro Carlotto for useful comments regarding the integrability of critical points of the Yamabe function, Sagun Chanillo for detailed and generous assistance with Hardy spaces, and Brendan Owens and Chris Woodward for helpful communications and comments regarding Morse--Bott theory. Both authors are very grateful to one researcher for pointing out an error in an earlier statement and proof of Theorem \ref{mainthm:Lojasiewicz-Simon_gradient_inequality_energy_functional_Riemannian_manifolds}.

\section{{\L}ojasiewicz--Simon gradient inequalities for the harmonic map energy function}
\label{sec:Lojasiewicz-Simon_gradient_inequality_harmonic_map_energy_functional}
Our overall goal in this section is to prove Theorem \ref{mainthm:Lojasiewicz-Simon_gradient_inequality_energy_functional_Riemannian_manifolds}, the {\L}ojasiewicz--Simon gradient inequality for the harmonic map energy function $\sE$ in the cases where $(N,h)$ is a closed, real analytic, Riemannian target manifold or $\sE$ is Morse--Bott at a critical point $f_\infty$, under the hypotheses that $f$ belongs to a traditional $W^{k,p}$ or an $L^2$ {\L}ojasiewicz--Simon neighborhood of $f_\infty$. By way of preparation we prove in Section \ref{subsec:Real_analytic_manifold_structure_Sobolev_space_maps} that $W^{k,p}(M;N)$ is a real analytic (respectively, $C^\infty$) Banach manifold when $(N,h)$ is real analytic (respectively, $C^\infty$). In Section \ref{subsec:Smoothness_analyticity_harmonic_map_energy_functional}, we prove that $\sE$ is real analytic (respectively, $C^\infty$) when $(N,h)$ is real analytic (respectively, $C^\infty$). In Section \ref{subsec:Application_Lojasiewicz-Simon_gradient_inequality_harmonic_map_energy_functional} we complete the proof of Theorem \ref{mainthm:Lojasiewicz-Simon_gradient_inequality_energy_functional_Riemannian_manifolds}, giving the $W^{k-2,p}$ {\L}ojasiewicz--Simon gradient inequality for the harmonic map energy function. Finally, in Section \ref{subsec:Application_Lojasiewicz-Simon_gradient_inequality_harmonic_map_energy_functional_L2}, we prove Corollary \ref{maincor:Lojasiewicz-Simon_gradient_inequality_energy_functional_Riemannian_manifolds_L2}, giving the $L^2$ {\L}ojasiewicz--Simon gradient inequality for the harmonic map energy function.

\subsection{Real analytic manifold structure on the space of Sobolev maps}
\label{subsec:Real_analytic_manifold_structure_Sobolev_space_maps}
The \cite[Theorems 13.5 and 13.6]{PalaisFoundationGlobal} due to Palais imply that the space $W^{k,p}(M;N)$ of $W^{k,p}$ maps (with $kp>d$) from a closed, $C^\infty$ manifold $M$ of dimension $d$ into a closed, $C^\infty$ manifold $N$ can be endowed with the structure of a $C^\infty$ manifold by choosing the fiber bundle, $E \to M$, considered by Palais to be the product $E = M\times N$ and viewing maps $f:M\to N$ as sections of $E\to M$. In particular, \cite[Theorem 13.5]{PalaisFoundationGlobal} establishes the $C^\infty$ structure while \cite[Theorem 13.6]{PalaisFoundationGlobal} identifies the tangent spaces.

While other authors have also considered the smooth manifold structure of spaces of maps between smooth manifolds (see Eichhorn \cite{Eichhorn_1993}, Krikorian \cite{Krikorian_1972}, or Piccione and Tausk \cite{Piccione_Tausk_2001}) or approximation properties (see Bethuel \cite{Bethuel_1991}), none appear to have considered the specific question of interest to us here, namely, the real analytic manifold structure of the space of Sobolev maps from a closed, Riemannian, $C^\infty$ manifold into a closed, real analytic, Riemannian manifold. Moreover, the question does not appear to be considered directly in standard references for harmonic maps (such as H{\'e}lein \cite{Helein_harmonic_maps}, Jost \cite{Jost_riemannian_geometry_geometric_analysis}, or Struwe \cite{Struwe_1996, Struwe_variational_methods}, or references cited therein). Those consideration aside, it will be useful to establish this property directly and, in so doing, develop the framework we shall need to prove the {\L}ojasiewicz--Simon gradient inequality for the harmonic map energy function (Theorem \ref{mainthm:Lojasiewicz-Simon_gradient_inequality_energy_functional_Riemannian_manifolds}).

We shall assume the notation and conventions of Section \ref{subsec:Lojasiewicz-Simon_gradient_inequality_harmonic_map_functional}, so $(M,g)$ is a closed, smooth Riemannian manifold of dimension $d$ and $(N,h)$ is a closed, real analytic (or $C^\infty$), Riemannian, manifold that is embedded analytically (or smoothly) and isometrically in $\RR^n$. We shall view $N$ as a subset of $\RR^n$ with Riemannian metric $h$ given by the restriction of the Euclidean metric. Therefore, a map $f: M \to N$ will be viewed as a map $f:M\to \RR^n$ such that $f(x)\in N$ for every $x\in M$ and similarly a section $Y:N \to TN$ will be viewed as a map $Y :N \to \RR^{2n}$ such that $Y(y) \in T_yN$ for every $y\in N$.

The space of maps,
\[
W^{k,p}(M; N) := \{f\in W^{k,p}(M;\RR^n): f(x)\in N, \text{ for a.e. } x\in M\},
\]
inherits the Sobolev norm from $W^{k,p}(M;\RR^n)$ and by \cite[Theorem 4.12]{AdamsFournier} embeds continuously into the Banach space of continuous maps, $C(M;\RR^n)$, when $kp>d$ or $p=1$ and $k=d$. Furthermore, for this range of exponents, $W^{k,p}(M; N)$ can be given the structure of a real analytic Banach manifold, as we prove in Proposition~\ref{prop:Banach_manifold}. A definition of coordinate charts on $W^{k,p}(M; N)$ is given \cite[Section~4.3]{KwonThesis}, which we now recall.

Let $\sO$ denote a normal tubular neighborhood \cite[p. 110]{Hirsch} of radius $\delta_0$ of $N$ in $\RR^n$, so $\delta_0 \in (0,1]$ is sufficiently small that there is a well-defined projection map, $\pi_h : \sO \to N \subset \RR^n$, from $\sO$ to the nearest point of $N$. When $y\in N$, the value $\pi_h(y + \eta)$ is well defined for $\eta\in \RR^n$ with $|\eta|<\delta_0$ and the differential,
\begin{equation}
\label{eq:Differential_nearest_point_projection_as_orthogonal_projection}
d\pi_h(y+\eta): T_{y+\eta}\RR^n\cong \RR^n \to T_{\pi_h(y+\eta)}N,
\end{equation}
is given by orthogonal projection (see Simon \cite[Section 2.12.3, Theorem 1]{Simon_1996}).

\begin{lem}[Analytic diffeomorphism of a neighborhood of the zero-section of the tangent bundle onto an open neighborhood of the diagonal]
\label{lem:tubular_neighborhood}
Let $(N,h)$ be a closed, real analytic, Riemannian manifold that is analytically and isometrically embedded in $\RR^n$ and let $(\pi_h,\sO)$ be a normal tubular neighborhood of radius $\delta_0$ of $N \subset \RR^n$, where $\pi_h : \sO \to N \subset \RR^n$ is the projection to the nearest point of $N$.  Then there is a constant $\delta_1 \in (0, \delta_0]$ such that the map,
\begin{equation}
\label{eq:fact}
\Phi: \left\{(y, \eta)\in TN: |\eta|< \delta_1\right\}
\to N\times N\subset \RR^{2n},
\quad (y,\eta) \mapsto (y,\pi_h(y+\eta)),
\end{equation}
is an analytic diffeomorphism onto an open neighborhood of the diagonal of $N\times N \subset \RR^{2n}$.
\end{lem}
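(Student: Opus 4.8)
The plan is to apply the real-analytic inverse function theorem along the zero section of $TN$ and then upgrade the resulting local statement to a global one using the compactness of $N$. First I would record that $\Phi$ is real analytic on the tube $\{(y,\eta)\in TN : |\eta|<\delta_1\}$ for any $\delta_1\le\delta_0$: since $y\in N$ and $|\eta|<\delta_1\le\delta_0$, the point $y+\eta$ lies in $\sO$, where the nearest-point projection $\pi_h$ is real analytic (the target $N$ being analytically embedded); as $TN$ is a real-analytic manifold and $(y,\eta)\mapsto y+\eta$ is analytic, the composite $\Phi(y,\eta)=(y,\pi_h(y+\eta))$ is analytic and visibly takes values in $N\times N$. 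I would also note that $\Phi$ carries the zero section onto the diagonal $\Delta:=\{(y,y):y\in N\}$ bijectively, since $\pi_h(y)=y$ for every $y\in N$.

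The key computation is the differential of $\Phi$ at a zero-section point $(y_0,0)$. Using the canonical splitting $T_{(y_0,0)}TN \cong T_{y_0}N\oplus T_{y_0}N$ into horizontal and vertical parts, a tangent vector $(v,w)$ is represented by a curve with $\dot y(0)=v$ and, because $\eta(0)=0$, with $\dot\eta(0)=w$, both vectors lying in $T_{y_0}N\subset\RR^n$. Since $d\pi_h(y_0):\RR^n\to T_{y_0}N$ is orthogonal projection by \eqref{eq:Differential_nearest_point_projection_as_orthogonal_projection} and hence fixes $T_{y_0}N$, the chain rule yields
\[
d\Phi_{(y_0,0)}(v,w) = \bigl(v,\, d\pi_h(y_0)(v+w)\bigr) = (v,\,v+w).
\]
This linear map, given by the block matrix $\left(\begin{smallmatrix} I & 0 \\ I & I\end{smallmatrix}\right)$, is an isomorphism onto $T_{(y_0,y_0)}(N\times N)=T_{y_0}N\oplus T_{y_0}N$. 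By the analytic inverse function theorem, $\Phi$ therefore restricts to a local analytic diffeomorphism on a neighborhood of each point of the zero section.

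It remains to produce a single $\delta_1\in(0,\delta_0]$ for which $\Phi$ is injective on the entire tube $\{|\eta|<\delta_1\}$; this is the step I expect to be the main obstacle, and I would handle it by a compactness argument. Suppose no such $\delta_1$ exists. Then for each $m$ there are distinct points $(y_1^m,\eta_1^m)\neq(y_2^m,\eta_2^m)$ with $|\eta_i^m|<1/m$ and $\Phi(y_1^m,\eta_1^m)=\Phi(y_2^m,\eta_2^m)$. Comparing first components forces $y_1^m=y_2^m=:y^m$, and by compactness of $N$ I may pass to a subsequence with $y^m\to y_*\in N$; then both points converge to the zero-section point $(y_*,0)$. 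Since $\Phi$ is injective on some neighborhood of $(y_*,0)$ by the previous paragraph, this is impossible once $m$ is large, a contradiction.

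Fixing such a $\delta_1$, the map $\Phi$ is an injective local analytic diffeomorphism on $\{|\eta|<\delta_1\}$, hence an analytic diffeomorphism onto its image. Being a local diffeomorphism it is an open map, so its image is open in $N\times N$; as this image contains $\Delta$, it is an open neighborhood of the diagonal, which is exactly the assertion of the lemma.
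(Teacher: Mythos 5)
Your proof is correct and takes essentially the same route as the paper: establish analyticity of $\pi_h$ and hence of $\Phi$, check that $d\Phi$ is an isomorphism at each point of the zero section, and invoke the analytic inverse function theorem. You are in fact more careful in two places --- your computation $d\Phi_{(y_0,0)}(v,w)=(v,v+w)$ is the correct block-triangular form (the paper records the differential as $(\zeta,\eta)\mapsto(\zeta,d\pi_h(y)\eta)$, i.e.\ the identity, which omits the contribution of the base variation to the second component, though both maps are isomorphisms so nothing is affected), and you supply explicitly the compactness argument needed to pass from local diffeomorphisms near the zero section to a single uniform $\delta_1$ with global injectivity, a step the paper leaves implicit in ``the Analytic Inverse Function Theorem now yields the conclusion.''
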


\begin{proof}
For each $y \in N$, we have $\Phi(y,0) = (y,y) \in \diag(N\times N)$, where $\diag(N\times N)$ denotes the diagonal of $N \times N$. Moreover, $T_{(y,0)}(TN) = T_yN \times T_yN$ and the differential $d\Phi(y,0):(TN)_{(y,0)} \to T_yN \times T_yN$ is given by $(\zeta,\eta) \mapsto (\zeta, d\pi_h(y)(\eta)) = (\zeta,\eta)$, that is, the identity. By \cite[Theorem 5.1 and remark following proof, p. 110]{Hirsch}, the projection $\pi_h$ is $C^\infty$ and by replacing the role of the $C^\infty$ Inverse Function Theorem in its proof by the real analytic counterpart, one can show that $\pi_h$ is real analytic; see \cite[Section 2.12.3, Theorem 1]{Simon_1996} due to Simon for a proof. Thus $\Phi$ is real analytic and the Analytic Inverse Function Theorem now yields the conclusion of the lemma.
\end{proof}

For a map $f\in W^{k,p}(M; N)$, we note that, because there is a continuous Sobolev embedding, $W^{k,p}(M;\RR^n)\subset C(M;\RR^n)$, for $kp>d$ by \cite[Theorem 4.12]{AdamsFournier}, we can regard $f$ as a continuous map $f : M\to \RR^n$ such that $f(M)\subset N$. Let $B_f(\delta)$ denote the ball of center zero and radius $\delta>0$ in the Sobolev space, $W^{k,p}(M;f^*TN)$, and denote
\begin{equation}
\label{eq:Ball_in_Wkp_Vf}
\sU_f := B_f(\kappa(f)^{-1}\delta) \subset W^{k,p}(M;f^*TN),
\end{equation}
where $\kappa(f)$ is the norm of the Sobolev embedding, $W^{k,p}(M;f^*TN)\subset C(M;f^*TN)$.

\begin{prop}[Banach manifold structure on the Sobolev space of maps between Riemannian manifolds]
\label{prop:Banach_manifold}
Let $d\geq 1$ and $k\geq 1$ be integers and $p\in [1,\infty)$ be such that
\[
kp>d \quad \text{or}\quad k=d \text{ and } p=1.
\]
Let $(M,g)$ be a closed, Riemannian, $C^\infty$ manifold of dimension $d$ and $(N,h)$ be a closed, real analytic, Riemannian, manifold that is isometrically and analytically embedded in $\RR^n$ and identified with its image. Then the space of maps, $W^{k,p}(M; N)$, has the structure of a real analytic Banach manifold and for each $f\in W^{k,p}(M; N)$, there is a constant $\delta = \delta(N,h) \in (0,1]$
such that, with the definition of $\sU_f$ from \eqref{eq:Ball_in_Wkp_Vf}, the map,
\begin{equation}
\label{eq:Phi_f}
\Phi_f: W^{k,p}(M;f^*TN) \supset \sU_f \to W^{k,p}(M; N) , \quad u \mapsto \pi_h(f + u),
\end{equation}
defines an inverse coordinate chart on an open neighborhood of $f \in W^{k,p}(M; N)$ and a real analytic manifold structure on $W^{k,p}(M; N)$. Finally, if the hypothesis that $(N,h)$ is real analytic is relaxed to the hypothesis that it is $C^\infty$, then $W^{k,p}(M; N)$ inherits the structure of a $C^\infty$ manifold.
\end{prop}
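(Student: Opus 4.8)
The plan is to take the maps $\Phi_f$ in \eqref{eq:Phi_f} as inverse coordinate charts and to reduce every assertion---well-definedness, the homeomorphism property, and real analyticity of the transition maps---to a single analytic input: that post-composition with a real analytic map between open subsets of Euclidean spaces induces a real analytic map of the corresponding Sobolev spaces of sections. The engine behind this is the hypothesis $kp>d$ (or $k=d$, $p=1$), which by Adams and Fournier \cite[Theorems 4.12 and 4.39]{AdamsFournier} makes $W^{k,p}(M;\RR)$ a Banach algebra that embeds continuously in $C(M;\RR)$. In particular, for the real analytic matrix-valued map $y\mapsto P(y)$ given by orthogonal projection onto $T_yN$, the multiplier $P\circ f$ lies in $W^{k,p}$ and acts boundedly on $W^{k,p}(M;\RR^n)$; it is a bounded idempotent, so its range, namely $W^{k,p}(M;f^*TN)=\{u: u=(P\circ f)u\}$, is a closed subspace and hence a Banach space, so the domain $\sU_f$ of \eqref{eq:Ball_in_Wkp_Vf} is well defined.

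\textbf{The superposition lemma.} First I would prove the following $\omega$-lemma: if $\Omega\subset\RR^N$ is open, $F:\Omega\to\RR^m$ is real analytic, and $\sV\subset W^{k,p}(M;\RR^N)$ is the open set of $w$ with $w(M)\Subset\Omega$ (open because $W^{k,p}\subset C^0$), then $\sV\ni w\mapsto F\circ w\in W^{k,p}(M;\RR^m)$ is real analytic. Fixing $w_0\in\sV$ and writing $F(w_0(x)+\zeta)=\sum_{\alpha}\tfrac{1}{\alpha!}D^\alpha F(w_0(x))\zeta^\alpha$, one obtains the candidate power series $\sum_\alpha \tfrac1{\alpha!}\,(D^\alpha F\circ w_0)\,v^\alpha$ for $F\circ(w_0+v)$. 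Each term is a product of $W^{k,p}$ functions, so the Banach algebra bound $\|v^\alpha\|_{W^{k,p}}\le C^{|\alpha|-1}\|v\|_{W^{k,p}}^{|\alpha|}$ applies; combined with Cauchy estimates $|D^\alpha F|\le M\rho^{-|\alpha|}\alpha!$ on a compact neighborhood of $w_0(M)$ and a uniform $W^{k,p}$ bound on the coefficients $D^\alpha F\circ w_0$, the series converges in $W^{k,p}(M;\RR^m)$ for $\|v\|_{W^{k,p}}$ small and its homogeneous parts are bounded symmetric multilinear maps. This is precisely real analyticity of $w\mapsto F\circ w$; the $C^\infty$ case follows from the same expansion truncated at finite order with a Taylor remainder estimate.

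\textbf{Assembling the charts.} With the superposition lemma in hand I would verify the three chart axioms. For well-definedness: by the Sobolev embedding $\|u\|_{C(M)}\le\kappa(f)\|u\|_{W^{k,p}}$, so for $u\in\sU_f$ one has $\|u\|_{C(M)}<\delta\le\delta_1$, whence $f(x)+u(x)$ lies in the tubular neighborhood for every $x$; thus $\pi_h\circ(f+u)$ is defined, takes values in $N$, and lies in $W^{k,p}(M;\RR^n)$ by the lemma applied to the analytic map $\pi_h$, so $\Phi_f(u)\in W^{k,p}(M;N)$. For injectivity and openness of the image I would invert $\Phi_f$ using Lemma \ref{lem:tubular_neighborhood}: the analytic diffeomorphism $\Phi(y,\eta)=(y,\pi_h(y+\eta))$ has an analytic inverse whose second component $\psi(y,z)$ returns the unique small $\eta\in T_yN$ with $\pi_h(y+\eta)=z$; given $\tilde f$ with $(f,\tilde f)$ fiberwise in the domain of $\Phi^{-1}$, the section $u(x):=\psi(f(x),\tilde f(x))$ recovers $\Phi_f^{-1}(\tilde f)$ and lies in $W^{k,p}(M;f^*TN)$ by the lemma applied to $\psi$. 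Hence $\Phi_f$ is a homeomorphism onto an open neighborhood of $f$, and since every point of $W^{k,p}(M;N)$ is the center of such a chart these cover the space. For a transition map one has $v(x)=\psi\bigl(f_1(x),\,\pi_h(f_0(x)+u(x))\bigr)$, a composition of the analytic fiber maps $\pi_h$ and $\psi$ with the $W^{k,p}$ data, hence real analytic between open subsets of $W^{k,p}(M;f_0^*TN)$ and $W^{k,p}(M;f_1^*TN)$ by the lemma; this gives a real analytic atlas. Replacing ``real analytic'' by ``$C^\infty$'' throughout yields the final sentence.

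\textbf{Main obstacle.} The crux is the superposition lemma, and specifically the uniform $W^{k,p}$ control of the Taylor coefficients $D^\alpha F\circ w_0$ together with the factorial-beating Cauchy estimates needed to sum the series in the Banach algebra norm. The geometric bookkeeping---closedness of $W^{k,p}(M;f^*TN)$, the passage through the tubular-neighborhood chart $\Phi$, and the fact that the ``coefficient'' $f$ is itself only of class $W^{k,p}$ rather than smooth---is then routine once every fiberwise-analytic operation is treated consistently through that single lemma.
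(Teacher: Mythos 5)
Your proposal is correct and follows essentially the same route as the paper: the inverse charts $\Phi_f(u)=\pi_h(f+u)$, the tubular-neighborhood diffeomorphism of Lemma \ref{lem:tubular_neighborhood} to invert them and control the transition maps, and the analyticity of post-composition with $\pi_h$ on $W^{k,p}$ under the Banach-algebra hypothesis $kp>d$ (or $k=d$, $p=1$). The only real difference is one of explicitness: the paper asserts the analyticity of $u\mapsto\pi_h(f+u)$ as a map of Sobolev spaces by appeal to Palais's arguments (the power-series/dominated-convergence technique appears only later, in the proof of Proposition \ref{prop:analyticharmd}), whereas you isolate and prove the superposition ($\omega$-)lemma directly via Cauchy estimates and the Banach-algebra bound, which makes the argument more self-contained.
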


\begin{proof}
Because $N \subset \RR^n$ is a real analytic submanifold, it follows from arguments of Palais \cite[Chapter 13]{PalaisFoundationGlobal} that $W^{k,p}(M;N)$ is a real analytic submanifold of the Banach space $W^{k,p}(M;\RR^{2n})$. Because Palais treats the $C^\infty$ but not explicitly the real analytic case, we provide details.

Let $f \in W^{k,p}(M;N)$ and define an open ball with center $f$ and radius $\eps \in (0,1]$,
\[
\BB_f(\eps) := \{v \in W^{k,p}(M;\RR^{2n}): \|v-f\|_{W^{k,p}(M)} < \eps\},
\]
Recall from Lemma \ref{lem:tubular_neighborhood}, that the assignment $\Phi(y,\eta) = (y,\pi_h(y+\eta))$ defines an analytic diffeomorphism from an open neighborhood of the zero section $N \subset TN$ onto an open neighborhood of the diagonal $N \subset N\times N \subset \RR^{2n}$. In particular, the assignment $\Phi_f(u) = \pi_h(f+u)$, for $u$ belonging to a small enough open ball, $B_f(\delta_2)$, centered at the origin in $W^{k,p}(M;f^*TN)$, defines a real analytic embedding of $B_f(\delta_2)$ into $W^{k,p}(M;\RR^{2n})$ and onto a relatively open subset, $\Phi_f(B_f(\delta_2)) \subset W^{k,p}(M;N)$. Thus, for small enough $\eps$,
\[
\BB_f(\eps) \cap W^{k,p}(M;N) \subset \Phi_f(B_f(\delta_2)).
\]
The assignment $\Phi_f(u) = \pi_h(f+u) \in  W^{k,p}(M;N)$, for $u \in B_f(\delta_2)$, may be regarded as the restriction of the real analytic map,
\[
W^{k,p}(M;\RR^{2n}) \ni u \mapsto \pi_h(f + u) \in W^{k,p}(M;\RR^{2n}).
\]
Therefore, the collection of inverse maps, defined by each $f \in W^{k,p}(M;N)$,
\[
\Phi_f^{-1}: \BB_f(\eps) \cap W^{k,p}(M;N) \to W^{k,p}(M;f^*TN),
\]
defines an atlas for a real analytic manifold structure on $W^{k,p}(M;N)$ as a real analytic submanifold of $W^{k,p}(M;\RR^{2n})$.

Lastly, we relax the assumption of real analyticity and require only that $(N,h)$ be a $C^\infty$ closed, Riemannian manifold and isometrically and smoothly embedded in $\RR^n$ and identified with its image. The conclusion that $W^{k,p}(M;N)$ is a $C^\infty$ manifold is immediate from the proof in the real analytic case by just replacing real analytic with $C^\infty$ diffeomorphisms.
\end{proof}

\begin{rmk}[Identification of the tangent spaces]
\label{rmk:tangent}
The existence of a $C^\infty$ Banach manifold structure for $W^{k,p}(M;N)$ in the case of a smooth isometric embedding $(N,h)\subset \RR^n$ is also provided in \cite[Theorem 13.5]{PalaisFoundationGlobal}. In \cite[Theorem 13.6]{PalaisFoundationGlobal} the Banach space $W^{k,p}(M;f^*TN)$ is identified as the tangent space of the Banach manifold $W^{k,p}(M;N)$ at the point $f$. Note that for $f\in W^{k,p}(M;N)$, the differential $(d\Phi_f)(0) : W^{k,p}(M;f^*TN)\to T_fW^{k,p}(M;N)$ is the identity map.
\end{rmk}

\begin{rmk}[Properties of coordinate charts]
\label{rmk:chart_properties}
For the inverse coordinate chart $(\Phi_f, \sU_f)$ and $u\in \sU_f$ with $f_1 := \pi_h(f+u) \in W^{k,p}(M; N)$, the differential,
\[
(d\Phi_f)(u) : W^{k,p}(M;f^*TN) \to W^{k,p}(M;f_1^*TN) \subset W^{k,p}(M;\RR^n),
\]
is an isomorphism of Banach spaces. By choosing $\delta \in (0,1]$ in Proposition \ref{prop:Banach_manifold} sufficiently small we find that the norm of the operator,
\[
(d\Phi_f)(u) - (d\Phi_f)(0) : W^{k,p}(M;f^*TN) \to W^{k,p}(M;\RR^n),
\]
obeys
\[
\|(d\Phi_f)(u) - (d\Phi_f)(0)\| \leq 1,\quad \forall\, u\in \sU_f,
\]
and therefore $C_3 := \sup_{u \in \sU_f} \|(d\Phi_f)(u)\| \leq 2$. By applying the Mean Value Theorem to $\Phi_f$ and its inverse, we obtain
\begin{equation}
\label{eq:Kwon_4-7}
C_4^{-1}\|f-f_1\|_{W^{k,p}(M)} \leq  \|u\|_{W^{k,p}(M)} \leq C_4  \|f-f_1\|_{W^{k,p}(M)}
\end{equation}
for every $f\in W^{k,p}(M;N)$ and every  $u\in W^{k,p}(M;f^*TN)$ with $f_1 = \pi_h(f+u)$, where $C_4 \geq C_3$ depends on $(N,h)$ and $f$. (Compare \cite[Inequality (4.7)]{KwonThesis}.)
\end{rmk}

\subsection{Smoothness and analyticity of the harmonic map energy function}
\label{subsec:Smoothness_analyticity_harmonic_map_energy_functional}
We shall assume the notation and conventions of Section \ref{subsec:Real_analytic_manifold_structure_Sobolev_space_maps}. Recall Definition~\ref{defn:Harmonic_map_energy_functional} of the harmonic map energy function,
\[
\sE : W^{k,p}(M; N) \to \RR, \quad f \mapsto \frac{1}{2}\int_M |df|^2\, d\vol_g,
\]
and consider the pullback of $\sE$ by a local coordinate chart on $ W^{k,p}(M; N)$,
\begin{equation}
\label{eq:Pull_back_harmonic_map_energy_functional}
\sE\circ \Phi_f : W^{k,p}(M;f^*TN) \supset \sU_f \ni u \mapsto \frac{1}{2}\int_M| d(\pi_h(f + u))|^2\, d\vol_g \in \RR.
\end{equation}
We now establish the following proposition.

\begin{prop}[Smoothness and analyticity of the harmonic map energy function]
\label{prop:analyticharmd}
Let $d\geq 2$ and $k \geq 1$ be integers and $p\in [1,\infty)$ be such that
\[
kp > d \quad\text{or}\quad k=d \text{ and } p=1.
\]
Let $(M,g)$ and $(N,h)$ be closed, smooth Riemannian manifolds with $(N,h)$ real analytic and analytically and isometrically embedded in $\RR^n$ and identified with its image. If $f\in W^{k,p}(M;N)$, then $\sE\circ\Phi_f : W^{k,p}(M;f^*TN) \supset \sU_f \to \RR$ in \eqref{eq:Pull_back_harmonic_map_energy_functional} is a real analytic map, where $\sU_f \subset W^{k,p}(M;f^*TN)$ is as in \eqref{eq:Ball_in_Wkp_Vf} and the image of a coordinate neighborhood in $W^{k,p}(M;N)$. In particular, the function,
\[
\sE: W^{k,p}(M;N) \to \RR,
\]
is real analytic. Finally, if the hypothesis that $(N,h)$ is real analytic is relaxed to the hypothesis that it is $C^\infty$, then the function $\sE: W^{k,p}(M;N) \to \RR$ is $C^\infty$.
\end{prop}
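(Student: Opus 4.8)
The plan is to realize the pulled-back energy $\sE\circ\Phi_f$ as a composition of three maps of Banach spaces, each manifestly real analytic, and then invoke the fact that a composition of real analytic maps of Banach spaces is real analytic. Since $N$ is isometrically embedded in $\RR^n$, the energy density of any map into $N\subset\RR^n$ is computed with the Euclidean inner product on the target, and I would factor
\[
\sE\circ\Phi_f = Q\circ D\circ\Phi_f,
\]
where $\Phi_f(u)=\pi_h(f+u)$ is viewed as a map $\sU_f\to W^{k,p}(M;\RR^n)$, where $D:W^{k,p}(M;\RR^n)\to W^{k-1,p}(M;T^*M\otimes\RR^n)$ denotes componentwise exterior differentiation, and where $Q(w):=\tfrac12\int_M|w|_g^2\,d\vol_g$.

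First I would record that $\Phi_f$ is real analytic into $W^{k,p}(M;\RR^n)$; this is precisely the fact extracted in the proof of Proposition \ref{prop:Banach_manifold}. Indeed $\pi_h$ is real analytic on the tubular neighborhood $\sO$ (Lemma \ref{lem:tubular_neighborhood}), and $W^{k,p}(M;\RR)$ is a Banach algebra for the stated range of $(k,p)$ by \cite[Theorem 4.39]{AdamsFournier}, so the substitution (Nemytskii) operator $u\mapsto\pi_h(f+u)$ is real analytic as a map of Banach spaces. Next, $D$ is bounded and linear, hence real analytic, and $Q$ is a continuous homogeneous polynomial of degree two, hence also real analytic.

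The one point genuinely requiring the hypotheses is the continuity of $Q$, namely the estimate $|Q(w)|\le\tfrac12\|w\|_{L^2}^2\le C\|w\|_{W^{k-1,p}}^2$, which rests on the Sobolev embedding $W^{k-1,p}(M)\hookrightarrow L^2(M)$. I would verify this case by case with \cite[Theorem 4.12]{AdamsFournier}: if $(k-1)p>d$ then $W^{k-1,p}\hookrightarrow C(M)$, if $(k-1)p=d$ then $W^{k-1,p}\hookrightarrow L^q$ for every finite $q$, and if $(k-1)p<d$ then $W^{k-1,p}\hookrightarrow L^{q^*}$ with $q^*=dp/(d-(k-1)p)$; a short arithmetic check shows that $kp>d$ together with $d\ge 2$ forces $q^*\ge 2$, while the edge cases $k=1$ (where $p>d\ge 2$) and $k=d,\,p=1$ (where $W^{d-1,1}\hookrightarrow L^d$) are immediate. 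Since $M$ is closed, $L^q\hookrightarrow L^2$ for $q\ge 2$, giving the claim.

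Composing the three analytic maps shows $\sE\circ\Phi_f$ is real analytic; as $\{\Phi_f\}$ is a real analytic atlas by Proposition \ref{prop:Banach_manifold}, it follows that $\sE:W^{k,p}(M;N)\to\RR$ is real analytic. For the final assertion I would rerun the argument verbatim with $C^\infty$ in place of real analytic: the factors $D$ and $Q$ are polynomial and hence unchanged, while $u\mapsto\pi_h(f+u)$ is $C^\infty$ once $\pi_h$ is $C^\infty$, by the same Banach-algebra reasoning. I expect the main obstacle to be bookkeeping rather than conceptual: confirming that the single derivative lost under $D$ is absorbed by the embedding $W^{k-1,p}\hookrightarrow L^2$ uniformly across all admissible $(d,k,p)$, and that the analyticity of the substitution operator is correctly inherited from Proposition \ref{prop:Banach_manifold} instead of being re-proved.
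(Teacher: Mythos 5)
Your proof is correct, but it takes a genuinely different route from the paper's. The paper works directly with the integrand: it expands $d\pi_h(f+u)$ in a convergent power series in $u$ centered at $f$, substitutes this into $\frac12\int_M|d\pi_h(f+u)(df+du)|^2\,d\vol_g$, and verifies term-by-term convergence of the resulting power series in $u\in W^{k,p}(M;f^*TN)$ using the Sobolev embedding into $C(M)$ and the Dominated Convergence Theorem. You instead factor $\sE\circ\Phi_f = Q\circ D\circ\Phi_f$, observe that $Q\circ D$ is a continuous quadratic form on $W^{k,p}(M;\RR^n)$ (via $D$ bounded linear and the embedding $W^{k-1,p}(M)\hookrightarrow L^2(M)$, whose case analysis you carry out correctly for all admissible $(d,k,p)$, including $k=1$, $p>d$ and $k=d$, $p=1$), and then invoke analyticity of compositions of analytic maps of Banach spaces --- a fact the paper itself uses elsewhere, citing Whittlesey. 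Your approach is cleaner and isolates exactly where each hypothesis enters: the Banach-algebra/substitution-operator analyticity is confined to $\Phi_f$, and the Sobolev arithmetic is confined to the continuity of $Q$. What the paper's approach buys is self-containedness: by writing out the power series of $d\pi_h(f+u)$ explicitly, it does not need to lean on the assertion in the proof of Proposition \ref{prop:Banach_manifold} that $u\mapsto\pi_h(f+u)$ is real analytic into $W^{k,p}(M;\RR^n)$ --- an assertion that is stated there but argued rather tersely. Since your argument delegates precisely that point to Proposition \ref{prop:Banach_manifold}, you inherit whatever level of detail that proof provides; if you wanted your version to stand alone, you would need to supply the Nemytskii-operator analyticity estimate (power series of $\pi_h$ composed with the Banach algebra structure of $W^{k,p}(M;\RR)$) that the paper effectively writes out inside its proof of this proposition.
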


\begin{proof}
Our hypotheses on $d,k,p$ ensure that there is a continuous Sobolev embedding, $W^{k,p}(M;N) \subset C(M;N)$ by \cite[Theorem 4.12]{AdamsFournier} and that $W^{k,p}(M;\RR)$ is a Banach algebra by \cite[Theorem 4.39]{AdamsFournier}. By hypothesis, $f \in W^{k,p}(M;N)$, so $f \in C(M;N)$. We view $N \subset \RR^n$ as isometrically and real analytically embedded as the zero section of its tangent bundle, $TN$, and which is in turn isometrically and real analytically embedded in $\RR^{2n}$ and identified with its image. Moreover, if $u \in W^{k,p}(M;f^*TN)$, then $u \in C(M;f^*TN)$.

As in Lemma \ref{lem:tubular_neighborhood}, let $(\pi_h,\sO)$ be a normal tubular neighborhood of $N \subset \RR^n$ of radius $\delta_0 \in (0,1]$. Because the nearest-point projection map, $\pi_h:\sO\subset\RR^n \to N$, is real analytic, its differential, $(d\pi_h)(y) \in \Hom_\RR(\RR^n,T_yN) \subset \End_\RR(\RR^n)$, is a real analytic function of $y \in \sO$ and $d\pi_h(y):\RR^n\to\RR^n$ is orthogonal projection. We choose $\eps \in (0,1]$ small enough that $d\pi_h(y+z)$ has a power series expansion centered at each point $y \in \sO$ with radius of convergence $\eps$,
\[
d\pi_h(y+z) = \sum_{m=0}^\infty a_m(y) z^m, \quad\forall\, y,z \in \RR^n \text{ with } |z| < \eps,
\]
where (see, for example, Whittlesey \cite{Whittlesey_1965} in the case of analytic maps of Banach spaces), for each $y \in \sO$, the coefficients $a_m(y;z_1,\ldots,z_m)$ are continuous, multilinear, symmetric maps of $(\RR^n)^m$ into $\End_\RR(\RR^n)$ and we abbreviate $a_m(y;z,\ldots,z) = a_m(y) z^m$. The coefficient maps, $a_m(y)$, are (analytic) functions of $y \in \sO$, intrinsically defined as derivatives of $d\pi_h$ at $y \in \sO$. We shall use the convergent power series for $d\pi_h(y+z)$, in terms of $z$ with $|z| < \eps$, to determine a convergent power series for $(\sE\circ\Phi_f)(u)$ in \eqref{eq:Pull_back_harmonic_map_energy_functional}, namely
\[
(\sE\circ\Phi_f)(u) = \frac{1}{2}\int_M |d(\pi_h(f + u))|^2\,d\vol_g = \frac{1}{2}\int_M |d\pi_h(f+u)(df + du)|^2\,d\vol_g,
\]
in terms of $u \in W^{k,p}(M;f^*TN)$ with $\|u\|_{W^{k,p}(M)} < \delta$, where $\delta = \eps/\kappa$ and $\kappa = \kappa(f,g,h)$ is the norm of the Sobolev embedding, $W^{k,p}(M;f^*TN) \subset C(M;f^*TN)$. Recall that
\[
d\pi_h(f+u)(df + du)|_x = d\pi_h(f(x)+u(x))(df(x) + du(x)), \quad\forall\, x\in M,
\]
where $f(x)+u(x) \in \sO$ and $f(x) + du(x) \in T_{f(x)}N$. We have the pointwise identity,
\[
|d\pi_h(f + u)(df+du)|^2
=
\left|\left(\sum_{m=0}^\infty a_m(f) u^m\right)(df+du)\right|^2  \quad\text{on } M,
\]
and thus,
\begin{multline*}
|d(\pi_h(f + u))|^2
=
\sum_{m=0}^\infty |(a_m(f) u^m)(df+du)|^2
\\
+ 2\sum_{l=1}^\infty \sum_{m=0}^\infty \langle |(a_m(f) u^m)(df+du), (a_{m+l}(f) u^{m+l})(df+du)\rangle \quad\text{on } M.
\end{multline*}
After substituting the preceding expression and noting that $M$ is compact and that all integrands are continuous functions on $M$, the Lebesgue Dominated Convergence Theorem yields a convergent power series as a function of $u \in W^{k,p}(M;f^*TN)$ with $\|u\|_{W^{k,p}(M)} < \delta$,
\[
(\sE\circ\Phi_f)(u)
=
\frac{1}{2}\int_M \left|\left(\sum_{m=0}^\infty a_m(f) u^m\right)(df+du)\right|^2 \,d\vol_g,
\]
and thus $(\sE\circ\Phi_f)(u)$ is an analytic function of $u \in W^{k,p}(M;f^*TN)$ with $\|u\|_{W^{k,p}(M)} < \delta$.

We now relax the assumption of real analyticity of $(N,h)$ and require only that $(N,h)$ be a $C^\infty$ closed, Riemannian manifold and isometrically and smoothly embedded in $\RR^n$ and identified with its image. The conclusion that the map $\sE\circ\Phi_f:W^{k,p}(M;f^*TN) \supset \sU_f \to \RR$ is $C^\infty$ is immediate from the fact that $W^{k,p}(M;f^*TN) \subset C(M;f^*TN)$ because the pointwise expressions for $|d\pi_h(f(x)+u(x))(df(x) + du(x))|^2$, for $x\in M$, and all higher-order derivatives with respect to $z = u(x) \in \sO \subset \RR^n$ will be continuous functions on the compact manifold, $M$.
\end{proof}

\subsection{Application to the $W^{k-2,p}$ {\L}ojasiewicz--Simon gradient inequality for the harmonic map energy function}
\label{subsec:Application_Lojasiewicz-Simon_gradient_inequality_harmonic_map_energy_functional}
We continue to assume the notation and conventions of Section \ref{subsec:Real_analytic_manifold_structure_Sobolev_space_maps}. The covariant derivative, with respect to the Levi-Civita connection for the Riemannian metric $h$ on $N$, of a vector field $Y \in C^\infty(TN)$ is given by
\begin{equation}
\label{eq:convariant_derivative_TN}
(\nabla^hY)_y = d\pi_h(y)(dY),
\end{equation}
where $\pi_h$ is as discussed around \eqref{eq:Differential_nearest_point_projection_as_orthogonal_projection}
and the second fundamental form \cite[Definition 4.7.2]{Jost_riemannian_geometry_geometric_analysis} of the embedding $N\subset \RR^n$ is given by
\begin{equation}
\label{eq:2ndfundamentalform}
A_h(X,Y) := \left(\nabla_X^h Y\right)^\perp = dY(X) - d\pi_h(dY(X)),
\quad\forall\, X, Y \in C^\infty(TN),
\end{equation}
where $dY$ is the differential of the map $Y: N\to \RR^{2n}$ and we recall from \eqref{eq:Differential_nearest_point_projection_as_orthogonal_projection} that $d\pi_h(y):\RR^n \to T_yN$ is orthogonal projection, so $d\pi_h(y)^\perp = \id-d\pi_h(y):\RR^n \to (T_yN)^\perp$ is orthogonal projection onto the normal plane. By \cite[Lemma 4.7.2]{Jost_riemannian_geometry_geometric_analysis} we know that $A_h(y):T_yN\times T_yN \to (T_yN)^\perp$ is a symmetric bilinear form with values in the normal space, for all $y\in N$.

The forthcoming Lemma \ref{lem:Euler-Lagrange_equation_and_gradient_harmonic_map_energy_functional} is of course well-known (for example, see \cite[Lemma 1.4.10]{Helein_harmonic_maps}), but it will be useful to recall the proof since conventions vary in the literature and we shall subsequently require the ingredients involved in its proof. (Note that our sign convention for the Laplace operator \eqref{eq:Laplace-Beltrami_operator} is opposite to that of H\'elein in \cite[Equation (1.1)]{Helein_harmonic_maps}.)

\begin{lem}[Euler-Lagrange equation for a harmonic map and gradient of the harmonic map energy functional]
\label{lem:Euler-Lagrange_equation_and_gradient_harmonic_map_energy_functional}
Let $d\geq 1$ and $k\geq 1$ be integers and $p\in [1,\infty)$ be such that
\[
kp>d \quad \text{or}\quad k=d \text{ and } p=1.
\]
Let $(M,g)$ and $(N,h)$ be closed, smooth Riemannian manifolds, with $M$ of dimension $d$ and $(N,h) \subset \RR^n$ a $C^\infty$ isometric embedding. If $f_\infty \in W^{k,p}(M;N)$ is a critical point of the harmonic map energy functional, that is, $\sE'(f_\infty) = 0$, then $f_\infty$ is a weak solution to the Euler-Lagrange equation,
\[
\Delta_g f_\infty - A_h(f_\infty)(df_\infty,df_\infty) = 0 \quad\text{on } M,
\]
where $A_h$ is the second fundamental form defined by the embedding, $(N,h) \subset \RR^n$; moreover,
\begin{align*}
\sM(f) &= d\pi_h(f)\Delta_g f
\\
&= \Delta_g f - A_h(f)(df,df) \in W^{k-2,p}(M;f^*TN),
\end{align*}
is the gradient of $\sE$ at $f \in W^{k,p}(M;N)$ with respect to the inner product on $L^2(M;f^*TN)$,
\[
\sE'(f)(u) = (u,\sM(f))_{L^2(M;f^*TN)}, \quad\forall\, u \in W^{k,p}(M;f^*TN).
\]
\end{lem}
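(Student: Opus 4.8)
The plan is to compute the first variation of $\sE$ in the analytic chart $\Phi_f$ of Proposition \ref{prop:Banach_manifold}, integrate by parts, and then split $\Delta_g f$ into its parts tangent and normal to $N$. By Remark \ref{rmk:tangent} the tangent space is $T_fW^{k,p}(M;N) = W^{k,p}(M;f^*TN)$ with $(d\Phi_f)(0) = \id$, so for $u\in W^{k,p}(M;f^*TN)$ I would compute $\sE'(f)(u) = \tfrac{d}{dt}\big|_{t=0}(\sE\circ\Phi_f)(tu)$. Proposition \ref{prop:analyticharmd} guarantees that $\sE\circ\Phi_f$ is $C^1$ (indeed real analytic), so this derivative exists and is a continuous linear functional of $u$. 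Since $\tfrac{d}{dt}\big|_{t=0}\pi_h(f+tu) = d\pi_h(f)u = u$ --- using that $u(x)\in T_{f(x)}N$ and that $d\pi_h(f(x))$ is orthogonal projection onto $T_{f(x)}N$ by \eqref{eq:Differential_nearest_point_projection_as_orthogonal_projection} --- differentiating under the integral sign (justified by compactness of $M$, continuity of the integrands, and the Sobolev embedding $W^{k,p}(M;\RR^n)\subset C(M;\RR^n)$ valid since $kp>d$) yields
\[
\sE'(f)(u) = \int_M\langle df, du\rangle_g\,d\vol_g.
\]

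Next I would integrate by parts. Because $M$ is closed there are no boundary terms, and using the definition of $\Delta_g = d^{*,g}d$ in \eqref{eq:Laplace-Beltrami_operator} as the formal $L^2$-adjoint applied componentwise to $f = (f^1,\ldots,f^n)$,
\[
\int_M\langle df, du\rangle_g\,d\vol_g = \int_M\langle \Delta_g f, u\rangle\,d\vol_g,
\]
an identity that at this regularity follows by approximating $f$ and $u$ by smooth maps in $W^{k,p}$, the right-hand side being the pairing of $\Delta_g f\in W^{k-2,p}(M;\RR^n)$ with $u$. Since $u(x)\in T_{f(x)}N$ pointwise and $d\pi_h(f(x))$ is orthogonal projection onto that subspace, the integrand is unchanged when $\Delta_g f$ is replaced by its tangential part $d\pi_h(f)\Delta_g f$; hence
\[
\sE'(f)(u) = \bigl(u, d\pi_h(f)\Delta_g f\bigr)_{L^2(M;f^*TN)}, \qquad \forall\,u\in W^{k,p}(M;f^*TN),
\]
which identifies the $L^2$-gradient as $\sM(f) = d\pi_h(f)\Delta_g f$, a section of $f^*TN$ lying in $W^{k-2,p}$ by the mapping properties recorded in \eqref{eq:Gradient_harmonic_map_operator}.

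To obtain the second expression I would decompose $\Delta_g f$ relative to $TN$. In local coordinates $\Delta_g f = -g^{\alpha\beta}\partial_\alpha\partial_\beta f$ modulo first-order terms that are tangential (multiples of $\partial_\gamma f\in T_fN$), so the normal part of $\Delta_g f$ is $-g^{\alpha\beta}(\partial_\alpha\partial_\beta f)^\perp$. Differentiating the pointwise identity $d\pi_h(f)\partial_\alpha f = \partial_\alpha f$, or equivalently reading off the definition \eqref{eq:2ndfundamentalform} of $A_h$, gives the Gauss relation $(\partial_\alpha\partial_\beta f)^\perp = A_h(f)(\partial_\alpha f,\partial_\beta f)$; tracing against $g^{\alpha\beta}$ produces the term $A_h(f)(df,df)$ with the sign dictated by the conventions \eqref{eq:Laplace-Beltrami_operator}--\eqref{eq:2ndfundamentalform}, exactly as in \eqref{eq:Gradient_harmonic_map_operator}, so that
\[
\sM(f) = d\pi_h(f)\Delta_g f = \Delta_g f - A_h(f)(df,df).
\]

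Finally, if $\sE'(f_\infty) = 0$ then the displayed identity forces $\sM(f_\infty) = d\pi_h(f_\infty)\Delta_g f_\infty = 0$ as an element of $W^{k-2,p}(M;f_\infty^*TN)$, which is precisely the weak Euler--Lagrange equation $\Delta_g f_\infty - A_h(f_\infty)(df_\infty,df_\infty) = 0$. I expect the main obstacle to be the honest justification of the first-variation step at Sobolev regularity --- that $t\mapsto\pi_h(f+tu)$ is a differentiable path in $W^{k,p}$ with the stated derivative and that one may differentiate under the integral sign --- together with the integration-by-parts identity interpreting $\Delta_g f$ as a genuine $W^{k-2,p}$ section rather than merely a distribution; both rest on the embedding $W^{k,p}\subset C$ (since $kp>d$) and on Proposition \ref{prop:analyticharmd}. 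The remaining pointwise Gauss-formula computation and the sign bookkeeping fixed by the paper's conventions are then routine.
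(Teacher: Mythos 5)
Your proposal is correct and follows essentially the same route as the paper's proof: first variation in the chart $\Phi_f$ giving $\sE'(f)(u)=(df,du)_{L^2}$, integration by parts against $\Delta_g=d^{*,g}d$, insertion of $d\pi_h(f)$ using that $u$ is pointwise tangential, and then the tangential/normal splitting of $\Delta_g f$ to pass between $d\pi_h(f)\Delta_g f$ and $\Delta_g f - A_h(f)(df,df)$. The only divergence is that you derive the identity $d\pi_h(f)^\perp\Delta_g f = A_h(f)(df,df)$ from the Gauss relation by differentiating $d\pi_h(f)\partial_\alpha f=\partial_\alpha f$, whereas the paper simply cites H\'elein's Lemma 1.2.4 together with $d^2\pi_h=-A_h$ from Simon's notes --- your derivation is fine, but note that this is precisely the step where the sign conventions for $\Delta_g$ and $A_h$ must be tracked carefully, which is what the citation is meant to settle.
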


\begin{proof}
We consider variations of $f \in W^{k,p}(M;N)$ of the form $f_t = \pi_h(f+tu)$, for $u \in W^{k,p}(M;f^*TN)$ and $du \in W^{k-1,p}(M;T^*M\otimes \RR^n)$, recall from Section \ref{subsec:Real_analytic_manifold_structure_Sobolev_space_maps} that $d\pi_h(y):\RR^n\to T_yN$ is orthogonal projection for each $y\in N$, and use the definition \eqref{eq:Gradient_harmonic_map_operator} of $\sM$ to compute
\begin{align*}
\left(u, \sM(f)\right)_{L^2(M)}
&= \sE'(f)(u) = \left.\frac{d}{dt}\sE(\pi_h(f+tu))\right|_{t=0}
\\
&= \frac{1}{2}\left.\frac{d}{dt}(d\pi_h(f+tu), d\pi_h(f+tu))_{L^2(M)} \right|_{t=0}
\\
&= (d\pi_h(f)(du), df)_{L^2(M)} = (du, df)_{L^2(M)},
\end{align*}
where $d\pi_h(f):\underline{\RR}^n \to f^*TN$ is orthogonal projection from the product bundle, $\underline{\RR}^n = M\times\RR^n$, onto the pullback by $f$ of the tangent bundle, $TN$. Thus, writing $\Delta_g f = d^{*,g}df$ for the scalar Laplacian on the components of $f = (f^1,\ldots, f^n): M \to N \subset \RR^n$ implied by the isometric embedding, $(N,h) \subset \RR^n$, we obtain
\begin{align*}
\left(u, \sM(f)\right)_{L^2(M,g)}
&=
(d(d\pi_h(f)(u)), df)_{L^2(M)}
\\
&= (d\pi_h(f)(u), d^{*,g}df)_{L^2(M)}
\\
&= (u, d\pi_h(f)(\Delta_g f) )_{L^2(M)} = (u,\Delta_g f)_{L^2(M)}.
\end{align*}
From \cite[Lemma 1.2.4]{Helein_harmonic_maps} (and noting that our sign convention for the Laplace operator is opposite to that of H\'elein in \cite[Equation (1.1)]{Helein_harmonic_maps}), we have
\begin{equation}
\label{eq:Harmonic_map_pre-Euler-Lagrange_equation}
d\pi_h(f)^\perp(\Delta_g f) = A_h(f)(df,df),
\end{equation}
where $d\pi_h(f)^\perp:\underline{\RR}^n \to f^*T^\perp N$ is orthogonal projection from the product bundle, $\underline{\RR}^n = M\times\RR^n$ onto the pullback by $f$ of the normal bundle, $T^\perp N$, defined by the embedding, $N \subset \RR^n$.

If $f_\infty$ is weakly harmonic, that is, a critical point of $\sE$ and $\sE'(f_\infty)(u) = 0 = (u, \Delta_g f_\infty)_{L^2(M)}$ for all $u \in W^{k,p}(M;f_\infty^*TN)$, then $(\Delta_g f_\infty)(x) \perp T_{f_\infty(x)}N$ for all $x \in M$ (and as in \cite[Lemma 1.4.10 (i)]{Helein_harmonic_maps}). Hence, $d\pi_h(f_\infty)(\Delta_g f_\infty) = 0$ and \eqref{eq:Harmonic_map_pre-Euler-Lagrange_equation} becomes, after replacing $f$ by $f_\infty$,
\[
\Delta_g f_\infty = A_h(f_\infty)(df_\infty,df_\infty),
\]
as claimed (and as in \cite[Lemma 1.4.10 (ii)]{Helein_harmonic_maps}, noting our opposite sign convention for $\Delta_g$). Also,
\begin{align*}
\left(u, \sM(f)\right)_{L^2(M,g)}
&=
(u, d\pi_h(f)(\Delta_g f) )_{L^2(M)}
\\
&= (u,\Delta_g f - d\pi_h(f)^\perp(\Delta_g f) )_{L^2(M)}
\\
&= (u,\Delta_g f - A_h(f)(df,df) )_{L^2(M)} \quad\text{(by \eqref{eq:Harmonic_map_pre-Euler-Lagrange_equation})},
\quad\forall u \in W^{k,p}(M;f^*TN),
\end{align*}
and thus,
\[
\sM(f) = \Delta_g f - A_h(f)(df,df) \in W^{k-2,p}(M;f^*TN),
\]
the gradient of $\sE$ at $f$ with respect to the $L^2$-metric, as claimed.
\end{proof}

Next, we prove\footnote{Although we use the expression $\sM(f) = \Delta_g f - A_h(f)(df,df)$ in this proof of Lemma \ref{lem:Gradient_harmonic_map_operator_smooth}, one could alternatively use the equivalent expression $\sM(f) = d\pi_h(f)\Delta_g f$ and apply the method of proof of Lemma \ref{lem:Isomorphism_Sobolev_spaces_sections_two_nearby_maps}.} a partial analogue for the gradient map, $\sM$, of Proposition \ref{prop:analyticharmd} for the harmonic map energy functional.

\begin{lem}[Smoothness of the gradient map]
\label{lem:Gradient_harmonic_map_operator_smooth}
Let $d\geq 2$ and $k\geq 1$ be integers and $p\in [1,\infty)$ a constant such that
\[
kp>d \quad \text{or}\quad k=d \text{ and } p=1.
\]
Let $(M,g)$ and $(N,h)$ be closed, smooth Riemannian manifolds, with $M$ of dimension $d$. Then the gradient map \eqref{eq:Gradient_harmonic_map_operator} is $C^\infty$,
\[
W^{k,p}(M;N) \ni f \mapsto \sM(f)
\in
W^{k-2,p}(M;\RR^n),
\]
where $\sM(f) \in W^{k-2,p}(M;f^*TN)$.
\end{lem}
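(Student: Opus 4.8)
The plan is to verify smoothness of $\sM$ locally, one chart at a time. Since the target $W^{k-2,p}(M;\RR^n)$ is a fixed Banach space — we regard $\sM(f)\in W^{k-2,p}(M;f^*TN)$ as an element of $W^{k-2,p}(M;\RR^n)$ via the inclusion $f^*TN\subset\underline{\RR}^n=M\times\RR^n$ — it suffices to fix $f_0\in W^{k,p}(M;N)$, use the inverse coordinate chart $\Phi_{f_0}$ of Proposition \ref{prop:Banach_manifold}, and prove that
\[
\sM\circ\Phi_{f_0}:\sU_{f_0}\subset W^{k,p}(M;f_0^*TN)\to W^{k-2,p}(M;\RR^n)
\]
is $C^\infty$ between an open subset of a Banach space and a Banach space. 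Writing $G(u):=\pi_h(f_0+u)\in W^{k,p}(M;N)\subset W^{k,p}(M;\RR^n)$ (which lands in $N$ because $\|u\|_{C(M)}\leq\kappa\|u\|_{W^{k,p}(M)}$ is small on $\sU_{f_0}$), the formula for $\sM$ from Lemma \ref{lem:Euler-Lagrange_equation_and_gradient_harmonic_map_energy_functional} gives
\[
(\sM\circ\Phi_{f_0})(u)=\Delta_g G(u)-A_h(G(u))\bigl(dG(u),dG(u)\bigr),
\]
and I would treat the linear term $\Delta_g G(u)$ and the quadratic term separately.

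The two engines are a composition (``omega'') lemma and a Sobolev multiplication estimate, both available because $kp>d$. For the composition lemma, $kp>d$ makes $W^{k,p}(M;\RR)$ a Banach algebra embedding continuously in $C(M;\RR)$ by \cite[Theorems 4.12 and 4.39]{AdamsFournier}, and the arguments of Palais \cite[Chapter 13]{PalaisFoundationGlobal} — the same ones underlying the chart construction in Proposition \ref{prop:Banach_manifold} and the power-series computation in Proposition \ref{prop:analyticharmd} — show that for any $C^\infty$ map $\psi$ defined on the tubular neighborhood $\sO$, the Nemytskii operator $F\mapsto\psi\circ F$ is $C^\infty$ on $W^{k,p}$ sections. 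Applying this to $\psi=\pi_h$ shows $u\mapsto G(u)$ is $C^\infty$ into $W^{k,p}(M;\RR^n)$, hence $u\mapsto dG(u)$ is $C^\infty$ into $W^{k-1,p}(M;T^*M\otimes\RR^n)$ since $d$ is bounded linear; applying it to a smooth extension of the second fundamental form $A_h$ shows $u\mapsto A_h(G(u))$ is $C^\infty$ into $W^{k,p}(M;\cdot)$.

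For the linear term, $\Delta_g:W^{k,p}(M;\RR^n)\to W^{k-2,p}(M;\RR^n)$ is bounded and linear, hence $C^\infty$, so $u\mapsto\Delta_g G(u)$ is $C^\infty$ as a composition. For the quadratic term it remains to see that the contraction map
\[
(A,\xi,\eta)\longmapsto A(\xi,\eta),\qquad W^{k,p}\times W^{k-1,p}\times W^{k-1,p}\longrightarrow W^{k-2,p},
\]
is a bounded trilinear map, since a bounded multilinear map is automatically $C^\infty$. I would factor it as $A\cdot\xi\in W^{k-1,p}$ — multiplication by the Banach-algebra element $A\in W^{k,p}$ preserves $W^{k-1,p}$ — followed by the genuinely critical product $W^{k-1,p}\cdot W^{k-1,p}\to W^{k-2,p}$. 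Granting this, $u\mapsto A_h(G(u))(dG(u),dG(u))$ is a composition of the $C^\infty$ maps above with a bounded trilinear map, hence $C^\infty$; subtracting shows $\sM\circ\Phi_{f_0}$ is $C^\infty$, and since $f_0$ was arbitrary, so is $\sM$.

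I expect the real technical obstacle to be the multiplication estimate $W^{k-1,p}\cdot W^{k-1,p}\hookrightarrow W^{k-2,p}$, whose borderline instance $k=1$ reads $L^p\cdot L^p\hookrightarrow W^{-1,p}$. Here the hypothesis $kp>d$ (forcing $p>d$ when $k=1$) is exactly what rescues it: by duality $W^{-1,p}=(W^{1,p'})^*$, and since $p'<d$ the Sobolev embedding $W^{1,p'}(M)\hookrightarrow L^{q}(M)$ with $1/q=1/p'-1/d$ dualizes to $L^{q'}(M)\hookrightarrow W^{-1,p}(M)$ with $1/q'=1/p+1/d$; as $p>d$ gives $p/2\geq q'$, on the compact manifold $M$ one has $L^{p/2}=L^p\cdot L^p\hookrightarrow L^{q'}\hookrightarrow W^{-1,p}$. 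For $k\geq2$ this is the standard Sobolev multiplication theorem applied with orders $(k-1,k-1,k-2)$, valid because $(k-1)+(k-1)-(k-2)=k>d/p$. Verifying these estimates uniformly across the stated range of $(k,p)$, together with the smooth/real-analytic bookkeeping in the composition lemma, is where the care is required; everything else is formal once those ingredients are in hand.
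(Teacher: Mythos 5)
Your argument is correct in substance for the main range of exponents, but it follows a genuinely different route from the paper's. The paper's proof works directly with the projection formula $\sM(f)=d\pi_h(f)\Delta_g f$ from \eqref{eq:Gradient_harmonic_map_operator}: it shows that $f\mapsto d\pi_h(f)\in W^{k,p}(M;\End(\RR^n))$ is $C^\infty$ via Palais' composition results \cite[Corollary 9.10]{PalaisFoundationGlobal}, notes that $\Delta_g$ is bounded linear into $W^{k-2,p}$, and then only needs the \emph{module} multiplication $W^{k,p}\times W^{k-2,p}\to W^{k-2,p}$ (Palais \cite[Corollary 9.7]{PalaisFoundationGlobal} for $k\geq 2$, a duality argument as in Lemma \ref{lem:Isomorphism_Sobolev_spaces_sections_two_nearby_maps} for $k=1$). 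Your chart-based argument with $\sM(f)=\Delta_g f-A_h(f)(df,df)$ instead requires the harder product $W^{k-1,p}\cdot W^{k-1,p}\to W^{k-2,p}$; you correctly identify this as the crux, and your treatment of the critical instance $k=1$, $p>d$ via $L^p\cdot L^p\subset L^{p/2}\hookrightarrow L^{q'}\hookrightarrow W^{-1,p}$ checks out. What the paper's choice of formula buys is precisely that this bilinear estimate on two factors of regularity $k-1$ never arises; the footnote attached to the lemma acknowledges that either expression for $\sM(f)$ can be made to work.

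One caveat: the lemma's hypotheses also admit the borderline case $k=d$ and $p=1$, where $kp=d$ rather than $kp>d$. Your stated criterion for the $k\geq 2$ multiplication, $(k-1)+(k-1)-(k-2)=k>d/p$, fails there ($k=d/p$ exactly), so as written your argument does not cover that case. The estimate $W^{d-1,1}\cdot W^{d-1,1}\to W^{d-2,1}$ does in fact still hold: by Leibniz and the borderline embeddings $W^{m,1}(M)\hookrightarrow L^{d/(d-m)}$ for $m<d$, the term $\nabla^i f\cdot\nabla^{j-i}g$ lies in $L^1$ by H\"older exactly when $(i+1)/d+(j-i+1)/d\leq 1$, that is, when $j\leq d-2$. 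But this equality case must be argued separately, since the generic multiplication theorem you invoke requires strict inequality. The Banach-algebra and $C^0$-embedding facts you use elsewhere do extend to $k=d$, $p=1$, as the paper notes.
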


\begin{proof}
Recall from Proposition \ref{prop:Banach_manifold} that $W^{k,p}(M;N)$ is a $C^\infty$ Banach manifold and by \eqref{eq:Gradient_harmonic_map_operator},
\[
\sM(f) = d\pi_h(f)\Delta_gf.
\]
We recall from Section \ref{subsec:Real_analytic_manifold_structure_Sobolev_space_maps} that the nearest point projection, $\pi_h:\RR^n \supset \sO \to N$, is a $C^\infty$ map on a normal tubular neighborhood of $N \subset \RR^n$ and that $d\pi_h:
\sO\times\RR^n \to TN$ is $C^\infty$ orthogonal projection. In particular, $d\pi_h \in C^\infty(N; \End(\RR^n))$, while $f \in W^{k,p}(M;N)$ and thus $d\pi_h(f) \in W^{k,p}(M; \End(\RR^n))$ by \cite[Corollary 9.10]{PalaisFoundationGlobal}.

Define $W^{k,p}(M;\sO) :=\{f\in W^{k,p}(M;\RR^n): f(M)\subset \sO\}$, an open subset of the Sobolev space $W^{k,p}(M;\RR^n)$. Recall from \cite[Corollary 9.10]{PalaisFoundationGlobal} that if $S \in C^\infty(\sO;\RR^l)$ for an integer $l\geq 1$, then the map, $W^{k,p}(M;\sO) \ni f \mapsto S(f) \in W^{k,p}(M;\RR^l)$ is continuous under our hypotheses on $d,k,p$ (the case $k=d$ and $p=1$ follows by Palais' argument, though he only considers the case $kp>d$). The Chain Rule for $C^\infty$ maps of Banach spaces implies that the map, $W^{k,p}(M;\sO) \ni f \mapsto S(f) \in W^{k,p}(M;\RR^l)$, is $C^\infty$. In particular,
\begin{equation}
\label{eq:Sobolev_smooth_projection}
W^{k,p}(M;\sO) \supset W^{k,p}(M;N) \ni f \mapsto d\pi(f) \in W^{k,p}(M;\End(\RR^n)),
\end{equation}
is a $C^\infty$ map.

The linear operator, $W^{k,p}(M;\RR^n) \ni v \mapsto \Delta_gv \in W^{k-2,p}(M;\RR^n)$, is bounded and restricts to a $C^\infty$ map, $W^{k,p}(M;N) \ni f \mapsto \Delta_gf \in W^{k-2,p}(M;\RR^n)$.

The Sobolev space, $W^{k,p}(M;\RR)$, is a Banach algebra by \cite[Theorem 4.39]{AdamsFournier}, and the Sobolev multiplication map, $W^{k,p}(M;\RR) \times W^{k-2,p}(M;\RR) \to W^{k-2,p}(M;\RR)$ is bounded by \cite[Corollary 9.7]{PalaisFoundationGlobal} for $k\geq 2$ and the proof of Lemma \ref{lem:Isomorphism_Sobolev_spaces_sections_two_nearby_maps} for $k=1$. The projection, $d\pi_h(f) \in W^{k,p}(M; \Hom(M\times\RR^n, f^*TN))$, acts on $v \in W^{k-2,p}(M;f^*TN)$ by pointwise inner product with coefficients in $W^{k,p}(M;\RR)$ and therefore
\[
W^{k-2,p}(M;\RR^n) \ni v \mapsto d\pi(f)v \in W^{k-2,p}(M;f^*TN) \subset W^{k-2,p}(M;\RR^n),
\]
is a $C^\infty$ map. By combining the preceding observations with the Chain Rule for $C^\infty$ maps of Banach manifolds, we see that
\[
W^{k,p}(M;N)\ni f \mapsto d\pi_h(f)\Delta_gf \in W^{k-2,p}(M;\RR^n)
\]
is a $C^\infty$ map, as claimed. This completes the proof of Lemma \ref{lem:Gradient_harmonic_map_operator_smooth}.
\end{proof}

Before establishing real analyticity of the gradient map, we prove the following elementary

\begin{lem}[Analyticity of maps of Banach spaces]
\label{lem:subanalytic}
Let $\sX, \tilde\sX, \sY$ be Banach spaces, $\sU\subset \sX$ an open subset, $\tilde\sX \subset \sY$ a continuous embedding, and $\sF: \sU \to \tilde\sX$ a $C^\infty$ map. If the composition, $\sF: \sU\to\sY$, is real analytic at $x\in\sU$, then $\sF: \sU \to \tilde\sX$ is also real analytic at $x$.
\end{lem}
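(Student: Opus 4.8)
The plan is to reduce the assertion to a convergence statement for the Taylor series of $\sF$ at $x$, exploiting that the embedding $\iota:\tilde\sX\embed\sY$ is continuous, linear and injective. Since $\sF:\sU\to\tilde\sX$ is $C^\infty$, its derivatives $D^m\sF(x)$ exist as bounded symmetric $m$-linear maps with values in $\tilde\sX$, so the formal Taylor series $\sum_{m\geq 0}\tfrac{1}{m!}D^m\sF(x)(v,\dots,v)$ already has coefficients in $\tilde\sX$. Because $\iota$ is bounded and linear, one has $D^m(\iota\circ\sF)(x)=\iota\circ D^m\sF(x)$, so the Taylor coefficients of the $\sY$-valued map $\iota\circ\sF$ are precisely the $\iota$-images of those of $\sF$. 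Thus the two expansions differ only by post-composition with $\iota$, and the entire content of the lemma is to pass from convergence of this series in $\sY$ to convergence in the stronger norm of $\tilde\sX$.

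First I would record what the hypothesis on $\iota\circ\sF$ gives: there are constants $r\in(0,1]$ and $C\in(0,\infty)$ such that the series $\sum_m\tfrac1{m!}(\iota\circ D^m\sF(x))(v,\dots,v)$ converges to $\iota\sF(x+v)$ in $\sY$ whenever $\|v\|_\sX<r$, together with the coefficient bound $\tfrac1{m!}\|\iota\circ D^m\sF(x)\|\leq C r^{-m}$. I would then observe that the \emph{summation} half of analyticity is automatic on the $\tilde\sX$ side: granting that the $\tilde\sX$-valued Taylor series converges for small $v$ to some $G(v)\in\tilde\sX$, continuity of $\iota$ gives
\[
\iota G(v)=\sum_{m}\tfrac1{m!}(\iota\circ D^m\sF(x))(v,\dots,v)=\iota\sF(x+v),
\]
whence $G(v)=\sF(x+v)$ by injectivity of $\iota$. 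Consequently the whole lemma reduces to the single point that the $\tilde\sX$-valued Taylor series converges in the $\tilde\sX$-norm on a neighborhood of $0$.

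The main obstacle is exactly this $\tilde\sX$-norm convergence, which amounts to a coefficient estimate $\tfrac1{m!}\|D^m\sF(x)\|_{\sL^m(\sX;\tilde\sX)}\leq C'(r')^{-m}$. Here the analyticity of $\iota\circ\sF$ is of no direct help, since the embedding inequality reads $\|\iota\circ D^m\sF(x)\|\leq\|\iota\|\,\|D^m\sF(x)\|_{\sL^m(\sX;\tilde\sX)}$, i.e. it controls the weaker $\sY$-norm by the stronger $\tilde\sX$-norm, the opposite of what is needed. The role of the $C^\infty$ hypothesis into $\tilde\sX$ is to supply the missing ingredient. Concretely, I would write Taylor's theorem with integral remainder \emph{in $\tilde\sX$}, $\sF(x+v)=\sum_{m<M}\tfrac1{m!}D^m\sF(x)(v,\dots,v)+R_M(v)$ with $R_M(v)\in\tilde\sX$, so that $\iota R_M(v)\to 0$ in $\sY$ by the $\sY$-analyticity, and then argue that $R_M(v)\to 0$ in $\tilde\sX$. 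The decisive structural input, and the step I expect to be delicate, is that on the range of $\iota$ the norms $\|\cdot\|_{\tilde\sX}$ and $\|\iota(\cdot)\|_\sY$ are comparable — as they are in the applications, where $\tilde\sX$ is realized as a closed subspace of $\sY$ (for instance $W^{k-2,p}(M;f_\infty^*TN)$ inside $W^{k-2,p}(M;\RR^n)$) so that the inclusion is an isomorphism onto its image. Under this comparability the $\sY$-smallness of $R_M(v)$ forces its $\tilde\sX$-smallness, the coefficient bound transfers verbatim from $\sY$ to $\tilde\sX$, and the series converges absolutely in $\tilde\sX$ for $\|v\|_\sX<r'$; combined with the summation step of the previous paragraph this completes the proof.
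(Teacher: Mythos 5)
Your strategy is the same as the paper's: use the $C^\infty$ hypothesis to realize the Taylor coefficients $L_n(x)=D^n\sF(x)$ as elements of $\sL(\sX^n,\tilde\sX)$, note that their images under the embedding are the Taylor coefficients of the $\sY$-valued composition, transfer the geometric coefficient bound from $\sY$ back to $\tilde\sX$, and let injectivity of the embedding identify the sum of the $\tilde\sX$-valued series with $\sF(x+h)$. You have also correctly isolated the crux: everything hinges on an estimate of the form $\|z\|_{\tilde\sX}\leq C\,\|z\|_{\sY}$ for $z=L_n(x)(h_1,\dots,h_n)$, which is the \emph{reverse} of what a continuous embedding $\tilde\sX\subset\sY$ provides. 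For what it is worth, the paper's own proof writes down exactly this reversed inequality, attributing the constant to ``the norm of the embedding $\tilde\sX\subset\sY$'', so the step you flag as delicate is asserted there rather than proved.

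The genuine gap is in your proposed repair. Postulating that $\|\cdot\|_{\tilde\sX}$ and $\|\cdot\|_{\sY}$ are comparable on the range of the inclusion is equivalent to saying that $\tilde\sX$ embeds as a closed subspace of $\sY$; under that assumption the inclusion is an isomorphism onto its image and the lemma becomes essentially vacuous. But this is not a hypothesis of the lemma, and it fails precisely where the lemma is invoked (analyticity of the gradient map): there $\tilde\sX=W^{k-2,p}(M;\RR^n)$ and $\sY=(W^{k,p}(M;\RR^n))^*=W^{-k,p'}(M;\RR^n)$, and $W^{k-2,p}$ sits inside $W^{-k,p'}$ as a dense, non-closed subspace, so the two norms are not equivalent on it. (The instance you cite, $W^{k-2,p}(M;f_\infty^*TN)\subset W^{k-2,p}(M;\RR^n)$, is indeed a closed subspace, but it is not the embedding $\tilde\sX\subset\sY$ that the lemma uses.) Likewise, your fallback via Taylor's theorem with integral remainder in $\tilde\sX$ stalls for the reason you anticipate: bare $C^\infty$ smoothness gives no control on the growth of $\|D^{N}\sF\|_{\sL(\sX^{N},\tilde\sX)}$ in $N$, so $R_N(h)\to 0$ in $\tilde\sX$ cannot be extracted from $R_N(h)\to 0$ in $\sY$ without the same comparability. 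So the proposal does not prove the lemma as stated; its value is that it pinpoints the unjustified inequality on which the paper's argument also rests.
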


\begin{proof}
Recall the definition and notation in Feehan and Maridakis \cite[Section 2.1.2]{Feehan_Maridakis_Lojasiewicz-Simon_Banach} for analytic maps of Banach spaces. Because the composition, $\sF: \sU\to\sY$, is real analytic at $x$, there is a constant $\delta = \delta(x) > 0$ such that the Taylor series,
\[
\sF(x+h) - \sF(x) = \sum_{n=1}^\infty L_n(x)h^n, \quad\forall\, h \in \sX \text{ such that } \|h\|_\sX <\delta,
\]
converges in $\sY$, where, for each $n\in \NN$, we have that $L_n(x): \sX^n\to \sY$ is a bounded linear map, we denote $\sX^n = \sX\times \cdots \times \sX$ ($n$-fold product), and $h^n = (h,\ldots,h) \in \sX^n$.

Since $\sF: \sU \to \tilde\sX$ is $C^\infty$ at $x\in\sU$, then $D^n\sF(x) \in \sL(\sX^n,\tilde\sX)$ for all $n \in \NN$ and the coefficients, $L_n(x) = D^n\sF(x)$, of the Taylor series for $\sF$ centered at $x$ obey
\begin{align*}
\|L_n(x)\|_{\sL(\sX^n,\tilde\sX)}
&=
\sup_{\begin{subarray}{c}\|h_i\|_\sX=1, \\ 1 \leq i \leq n \end{subarray}}
\| L_n(x)(h_1,\ldots, h_n)\|_{\tilde\sX}
\\
&\leq \sup_{\begin{subarray}{c}\|h_i\|_\sX=1, \\ 1 \leq i \leq n \end{subarray}}
C\| L_n(x)(h_1,\ldots, h_n)\|_\sY
\\
&= C \|L_n(x)\|_{\sL(\sX^n,\sY)}, \quad \forall\, n \in \NN,
\end{align*}
where $C \in [1,\infty)$ is the norm of the embedding, $\tilde\sX \subset \sY$.

By definition of analyticity of the composition, $\sF:\sU \to \sY$, there is a constant, $r = r(x) \in (0,\delta]$ such that $\sum_{n=1}^\infty \|L_n(x)\|_{\sL(\sX^n,\sY)} r^n <\infty$. Hence, setting $r_1 = r/C$,
\[
\sum_{n=1}^\infty \|L_n(x)\|_{\sL(\sX^n,\tilde\sX)} r_1^n
\leq
\sum_{n=1}^\infty \|L_n(x)\|_{\sL(\sX^n,\sY)} r^n
< \infty.
\]
Therefore, setting $\delta_1 = \delta/C$, the Taylor series
\[
\sF(x+h) - \sF(x) = \sum_{n=1}^\infty L_n(x) h^n,
\quad \forall\, h \in \sX \text{ such that } \|h\|_\sX <\delta_1,
\]
converges in $\tilde\sX$ and so $\sF: \sU \to \tilde\sX$ is analytic at $x$.
\end{proof}

The converse to Lemma \ref{lem:subanalytic} is an immediate consequence of the analyticity of compositions of analytic maps of Banach spaces \cite[Theorem, p. 1079]{Whittlesey_1965}: If $\sF:\sU\to\tilde\sX$ is real analytic at $x$ and $\tilde\sX \subset \sY$ is a continuous embedding, then the composition, $\sF:\sU\to\sY$, is real analytic at $x$.

We shall also require sufficient conditions on $k$ and $p$ that ensure there is a continuous embedding, $W^{k-2,p}(M;\RR^n) \subset (W^{k,p}(M;\RR^n))^*$ and, to this end, we have the

\begin{lem}[Continuous embedding of a Sobolev space into a dual space]
\label{lem:Continuous_embedding_Wk-2p_into_Wkpdual}
Let $d\geq 2$ and $k\geq 1$ be integers and $p\in [1,\infty)$ a constant such that
\[
kp>d \quad \text{or}\quad k=d \text{ and } p=1,
\]
and, in addition, that $p > 1$ if $k=2$. Then there is a continuous embedding,
\[
W^{k-2,p}(M;\RR) \subset (W^{k,p}(M;\RR))^*
\]
\end{lem}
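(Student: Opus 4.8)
The plan is to produce a bounded, injective linear map from $W^{k-2,p}(M;\RR)$ into $(W^{k,p}(M;\RR))^* = W^{-k,p'}(M;\RR)$, which is exactly the asserted continuous embedding. The one tool I would use throughout is the Sobolev embedding $W^{k,p}(M)\hookrightarrow C(M)\hookrightarrow L^\infty(M)$ supplied by \cite[Theorem 4.12]{AdamsFournier} under the hypothesis $kp>d$ or $k=d,\ p=1$, together with the elementary inclusions $L^q(M)\hookrightarrow L^r(M)$ for $q\geq r$, valid because $M$ is closed and hence of finite volume. I would split the argument according to whether $k\geq 2$, so that $W^{k-2,p}$ is a genuine space of functions, or $k=1$, where $W^{-1,p}$ is only defined by duality.

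For $k\geq 2$ I would exhibit the pairing directly. Given $v\in W^{k-2,p}(M)\subset L^p(M)\subset L^1(M)$ and $u\in W^{k,p}(M)$, set $\langle u,v\rangle := \int_M uv\,d\vol_g$. Hölder's inequality together with the $L^\infty$ Sobolev embedding gives $|\langle u,v\rangle|\leq \|u\|_{L^\infty(M)}\|v\|_{L^1(M)}\leq C\|u\|_{W^{k,p}(M)}\|v\|_{W^{k-2,p}(M)}$, so $v\mapsto\langle\cdot,v\rangle$ is a bounded linear map $W^{k-2,p}(M)\to (W^{k,p}(M))^*$ of norm at most $C$. Injectivity is immediate: if the associated functional vanishes, then $\int_M uv\,d\vol_g=0$ for every $u\in C^\infty(M)\subset W^{k,p}(M)$, whence $v=0$ by the fundamental lemma of the calculus of variations. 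This argument is uniform in $p\in[1,\infty)$ for $k\geq 2$.

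For $k=1$ the hypothesis $kp>d\geq 2$ forces $p>2$, hence $1<p'<2<p<\infty$, and I would argue by duality rather than by an integral pairing. Since $p>p'$ and $M$ is closed, Hölder yields a bounded inclusion $\iota:W^{1,p}(M)\hookrightarrow W^{1,p'}(M)$ whose range contains $C^\infty(M)$ and is therefore dense. Its Banach-space adjoint $\iota^*$ is then a bounded injection $(W^{1,p'}(M))^*\hookrightarrow (W^{1,p}(M))^*$. Under the convention $(W^{1,q}(M))^*=W^{-1,q'}(M)$, and using $(p')'=p$, this reads $W^{-1,p}(M)\hookrightarrow W^{-1,p'}(M)$, which is precisely $W^{k-2,p}(M)\hookrightarrow (W^{k,p}(M))^*$. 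Because $\iota^*$ restricts to the identity on the common dense subspace $C^\infty(M)$ paired in $L^2$, this embedding coincides with the natural distributional one.

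The step I expect to be the main obstacle is the $k=1$ case, because there $W^{k-2,p}$ exists only as a dual space and one must carry the two identifications $W^{-1,p}=(W^{1,p'})^*$ and $W^{-1,p'}=(W^{1,p})^*$ correctly through the adjoint, then confirm that the resulting injection is the intended distributional inclusion and not merely an abstract one. By contrast the $k\geq 2$ case is routine once the $L^\infty$ Sobolev embedding is in hand. I would also remark that the extra hypothesis $p>1$ when $k=2$ is not needed for the direct argument above; it is natural only for an alternative route that identifies $L^p=(L^{p'})^*$ and dualizes $W^{2,p}\hookrightarrow L^{p'}$, an identification that fails at $p=1$.
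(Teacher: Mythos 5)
Your proof is correct, and for $k\geq 2$ it takes a genuinely different and more elementary route than the paper. The paper first identifies $(W^{k,p}(M;\RR))^*$ with $W^{-k,p'}(M;\RR)$ and then treats three cases: for $k=1$ it dualizes $W^{1,p}\subset W^{1,p'}$ exactly as you do; for $k=2$ it realizes $L^p\subset W^{-2,p'}$ as the dual of $W^{2,p}\subset C(M)\subset L^{p'}$, which is where the hypothesis $p>1$ enters (via $L^p=(L^{p'})^*$); and for $k\geq 3$ it runs a case analysis on $(k-2)p$ versus $d$, composing the Sobolev embeddings $W^{k-2,p}\subset L^{p^*}$ (or $L^q$, or $L^\infty$) with the dual embeddings $L^{r'}\subset W^{-k,p'}$. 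Your single $L^\infty$--$L^1$ pairing $\langle u,v\rangle=\int_M uv\,d\vol_g$, bounded via $W^{k,p}\subset C(M)$ and $W^{k-2,p}\subset L^1$, collapses the $k=2$ and $k\geq 3$ cases into one uniform argument, with injectivity from the fundamental lemma of the calculus of variations; it is also the canonical $L^2$-pairing embedding, which is what the paper actually needs downstream. Your observation that the hypothesis $p>1$ for $k=2$ is an artifact of the duality route and is not needed for the direct pairing is accurate. What the paper's finer case analysis for $k\geq 3$ buys is factorization through the sharper intermediate spaces $L^{p^*}$, but that extra information is not used in the lemma as stated. Your $k=1$ case coincides with the paper's, and your care in checking that the adjoint of $\iota$ agrees with the distributional inclusion on the dense subspace $C^\infty(M)$ is a point the paper leaves implicit.
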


We give the proof of Lemma \ref{lem:Continuous_embedding_Wk-2p_into_Wkpdual} in Appendix \ref{sec:Continuity_Sobolev_multiplication_maps}. We can now use Lemmas \ref{lem:subanalytic} and \ref{lem:Continuous_embedding_Wk-2p_into_Wkpdual} to establish real analyticity of the gradient map in the following analogue of Proposition \ref{prop:analyticharmd}, giving real analyticity of the energy functional.\footnote{One can also establish real analyticity of the gradient map directly by adapting the proof of Proposition \ref{prop:analyticharmd}.}

\begin{prop}[Analyticity of the gradient map]
\label{prop:Gradient_harmonic_map_operator_analyticity}
Assume the hypotheses of Lemma \ref{lem:Gradient_harmonic_map_operator_smooth} and, in addition, that $p > 1$ if $k=2$. If $(N,h)$ is real analytic and endowed with an isometric, real analytic embedding, $(N,h) \subset \RR^n$, then the gradient map \eqref{eq:Gradient_harmonic_map_operator} is real analytic,
\[
W^{k,p}(M;N) \ni f \mapsto \sM(f)
\in
W^{k-2,p}(M;\RR^n),
\]
where $\sM(f) \in W^{k-2,p}(M;f^*TN)$.
\end{prop}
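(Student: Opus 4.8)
The plan is to work in the real analytic charts $\Phi_f$ of Proposition \ref{prop:Banach_manifold} and to upgrade the already-known smoothness of the gradient map to analyticity by means of Lemma \ref{lem:subanalytic}. Since $W^{k,p}(M;N)$ is a real analytic Banach manifold and $W^{k-2,p}(M;\RR^n)$ is a Banach space, it suffices to prove that for each $f\in W^{k,p}(M;N)$ the composition $\sM\circ\Phi_f:\sU_f\to W^{k-2,p}(M;\RR^n)$ is real analytic, where $\sU_f\subset W^{k,p}(M;f^*TN)$ is as in \eqref{eq:Ball_in_Wkp_Vf}. By Lemma \ref{lem:Gradient_harmonic_map_operator_smooth} the map $\sM$ is $C^\infty$ into $W^{k-2,p}(M;\RR^n)$ and $\Phi_f$ is real analytic, so $\sM\circ\Phi_f$ is $C^\infty$. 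Hence, applying Lemma \ref{lem:subanalytic} with $\sX=W^{k,p}(M;f^*TN)$, $\tilde\sX=W^{k-2,p}(M;\RR^n)$, and $\sY=(W^{k,p}(M;\RR^n))^*$ --- the continuous embedding $\tilde\sX\subset\sY$ being furnished by Lemma \ref{lem:Continuous_embedding_Wk-2p_into_Wkpdual} applied componentwise and realized by the $L^2$ pairing --- it is enough to show that $\sM\circ\Phi_f$ is real analytic as a map into the \emph{dual} space $(W^{k,p}(M;\RR^n))^*$.

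To this end, write $f_1:=\Phi_f(u)=\pi_h(f+u)$ and recall from \eqref{eq:Gradient_harmonic_map_operator} that $\sM(f_1)=\Delta_g f_1-A_h(f_1)(df_1,df_1)$. For $v\in W^{k,p}(M;\RR^n)$, the value of the functional $\sM(f_1)\in(W^{k,p}(M;\RR^n))^*$ is, after integrating the Laplacian term by parts,
\[
\langle v,\sM(f_1)\rangle = (dv,df_1)_{L^2(M,g)} - \left(v,A_h(f_1)(df_1,df_1)\right)_{L^2(M,g)}.
\]
Now $u\mapsto f_1=\pi_h(f+u)$ is real analytic into $W^{k,p}(M;\RR^n)$ by the proof of Proposition \ref{prop:Banach_manifold}, hence $u\mapsto df_1$ is real analytic into $W^{k-1,p}(M;T^*M\otimes\RR^n)$ and, by composition with the analytic map $A_h$ (the substitution argument of the proof of Proposition \ref{prop:analyticharmd}), $u\mapsto A_h(f_1)$ is real analytic into $W^{k,p}$. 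It therefore suffices to observe that the two terms above are \emph{bounded multilinear} functions of these analytic arguments, valued in $(W^{k,p}(M;\RR^n))^*$: the map $(v,\xi)\mapsto(dv,\xi)_{L^2}$ and the map $(v,a,\xi,\zeta)\mapsto(v,a(\xi,\zeta))_{L^2}$ are bounded because $W^{k,p}(M;\RR^n)\subset C(M;\RR^n)$ and $W^{k-1,p}(M;T^*M\otimes\RR^n)\subset L^2$ under the stated hypotheses (these are exactly the embeddings used for Lemma \ref{lem:Continuous_embedding_Wk-2p_into_Wkpdual}, and the condition $p>1$ for $k=2$ enters here). Since a bounded multilinear map composed with real analytic maps is real analytic, $u\mapsto\sM(f_1)$ is real analytic into $(W^{k,p}(M;\RR^n))^*$, and Lemma \ref{lem:subanalytic} completes the argument.

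The essential point, and the only genuine obstacle, is the quadratic second fundamental form term $A_h(f_1)(df_1,df_1)$: establishing its analyticity \emph{directly} into $W^{k-2,p}$ would require the Sobolev multiplication $W^{k-1,p}\cdot W^{k-1,p}\to W^{k-2,p}$, which is delicate for small $k$ and fails in the borderline regime excluded by the hypothesis $p>1$ when $k=2$. Pairing against the test section $v$ and moving the problematic derivative onto $v\in W^{k,p}\subset C(M;\RR^n)$ reduces the requirement to the harmless estimate $|(v,A_h(f_1)(df_1,df_1))_{L^2}|\le C\|v\|_{C^0}\|A_h(f_1)\|_{C^0}\|df_1\|_{L^2}^2$; the already-established $C^\infty$ regularity into $W^{k-2,p}$ from Lemma \ref{lem:Gradient_harmonic_map_operator_smooth} then allows Lemma \ref{lem:subanalytic} to transfer analyticity back from the dual space to $W^{k-2,p}$. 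For comparison, the Laplacian term $u\mapsto\Delta_g\pi_h(f+u)$ is in fact directly real analytic into $W^{k-2,p}$, being the composition of the analytic chart map into $W^{k,p}$ with the bounded operator $\Delta_g:W^{k,p}\to W^{k-2,p}$, so it is precisely the nonlinear term that makes the detour through the dual space worthwhile.
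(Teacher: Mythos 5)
Your proof is correct and follows essentially the same route as the paper: combine the $C^\infty$ regularity of $\sM$ into $W^{k-2,p}$ from Lemma \ref{lem:Gradient_harmonic_map_operator_smooth} with real analyticity into the weaker target $(W^{k,p}(M;\RR^n))^*$, and transfer back via Lemma \ref{lem:subanalytic} using the embedding of Lemma \ref{lem:Continuous_embedding_Wk-2p_into_Wkpdual}. The only divergence is in the middle step: the paper observes that the composition of $\sM$ with that embedding is precisely the differential $\sE'$, whose analyticity is already furnished by Proposition \ref{prop:analyticharmd}, whereas you re-derive the dual-space analyticity by hand through the integration-by-parts formula and boundedness of the resulting multilinear pairings --- a correct but longer detour (and note that the hypothesis $p>1$ when $k=2$ is really consumed by the duality step in Lemma \ref{lem:Continuous_embedding_Wk-2p_into_Wkpdual}, not by your pointwise estimates).
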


\begin{proof}
Proposition \ref{prop:analyticharmd} implies that the map,
\[
W^{k,p}(M;N) \ni f \mapsto \sE'(f) \in (T_fW^{k,p}(M;N))^* = (W^{k,p}(M;f^*T^*N))^* \subset
(W^{k,p}(M;\RR^n))^*,
\]
is real analytic, while Lemma \ref{lem:Gradient_harmonic_map_operator_smooth} ensures that the gradient map,
\[
W^{k,p}(M;N) \ni f \mapsto \sM(f) \in W^{k-2,p}(M;f^*TN) \subset
W^{k-2,p}(M;\RR^n),
\]
is $C^\infty$. But Lemma \ref{lem:Continuous_embedding_Wk-2p_into_Wkpdual} yields  a continuous embedding,
\[
W^{k-2,p}(M;\RR^n) \subset (W^{k,p}(M;\RR^n))^*,
\]
and therefore analyticity of $\sM$ follows from Lemma \ref{lem:subanalytic}.
\end{proof}

The Hessian of $\sE$ at $f \in W^{k,p}(M;N)$ may be defined by
\begin{align*}
\sE''(f)(v,w)
&:=
\left.\frac{\partial^2}{\partial s\partial t}\sE(\pi_h(f + sv + tw))\right|_{s=t=0}
\\
&\,=
\left.\frac{d}{dt}\sE'(\pi_h(f+tw))(v)\right|_{t=0}
\\
&\,= (w, \sM'(f)(v))_{L^2(M;f^*TN)},
\end{align*}
for all $v, w \in W^{k,p}(M;f^*TN)$. The preceding general definition yields several equivalent expressions for the Hessian, $\sE''(f)$, and Hessian operator, $\sM'(f)$. One finds that \cite[Equation (4.3)]{KwonThesis}
\begin{equation}
\label{eq:harmonichessian_Kwon_thesis}
\sM'(f)v = \Delta_g v  - 2A_h(f)(df,dv) - (dA_h)(v)(df,df),
\quad\forall\, v \in W^{k,p}(M;f^*TN).
\end{equation}
Alternatively, from Lemma \ref{lem:Euler-Lagrange_equation_and_gradient_harmonic_map_energy_functional} and its proof, we have for all $v, w \in W^{k,p}(M;f^*TN)$,
\begin{align*}
\sM'(f)v &= \left.\frac{d}{dt} (d\pi_h(f+tv)w, \Delta_g \pi_h(f+tv))_{L^2(M)} \right|_{t=0}
\\
&= (d\pi_h(f)w, \Delta_g d\pi_h(f)(v))_{L^2(M)} + (d^2\pi_h(f)(v,w), \Delta_g f)_{L^2(M)}
\\
&= (d\pi_h(f)w, \Delta_g v)_{L^2(M)} + (d^2\pi_h(f)(v,w), \Delta_g f)_{L^2(M)},
\end{align*}
and thus
\begin{equation}
\label{eq:harmonichessian}
\sM'(f)v = d\pi_h(f)\Delta_gv + d^2\pi_h(f)(v,\cdot)^*\Delta_g f, \quad\forall\, v \in W^{k,p}(M;f^*TN).
\end{equation}
Before proceeding further, we shall need to consider the dependence of the operators, $d\pi_h(f)$ and $d^2\pi_h(f)$, on the maps, $f$. By \cite[Section 2.12.3, Theorem 1, Equation (v)]{Simon_1996}, we see that $d^2\pi_h(y)(v,w) = - A_h(y)(v,w)$ for every $y\in N$ and $v,w\in T_yN$. Therefore,
\begin{equation}
\label{eq:Hessian_nearest-point_projection}
d^2\pi_h(\tilde f)(v,w) = -A_h(\tilde f)(v,w) \in C^\infty(M; \tilde f^*(TN)^\perp), \quad\forall\, v,w \in C^\infty(M; \tilde f^*TN).
\end{equation}
We observe from the expression \eqref{eq:2ndfundamentalform} that the operator, $A_h(y):T_yN\times T_yN \to (T_yN)^\perp$, extends to an operator, $A_h(y):\RR^n\times \RR^n \to \RR^n$, for all $y \in N$.

Let $\cE, \cF$ be Banach spaces and $\Fred(\cE,\cF) \subset \sL(\cE,\cF)$ denote the subset of Fredholm operators.

\begin{thm}[Openness of the subset of Fredholm operators]
(See \cite[Corollary 19.1.6]{Hormander_v3}.)
\label{thm:Openness_set_Fredholm_operators}
The subset, $\Fred(\cE,\cF) \subset \sL(\cE,\cF)$, is open, the function, $\Fred(\cE,\cF) \ni T \mapsto \dim\Ker T$ is upper semi-continuous, and $\Ind T$ is constant in each connected component of $\Fred(\cE,\cF)$.
\end{thm}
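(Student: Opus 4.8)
The plan is to fix $T_0 \in \Fred(\cE,\cF)$ and reduce every nearby $T$ to a block-diagonal normal form in which the Fredholm property, the kernel dimension, and the index can all be read off directly. First I would split both spaces using the defining properties of $T_0$: since $\Ker T_0$ is finite-dimensional it admits a closed complement $\cE_1$, so $\cE = \cE_0 \oplus \cE_1$ with $\cE_0 := \Ker T_0$; and since $\ran T_0$ is closed of finite codimension it admits a finite-dimensional complement $\cF_0$, so $\cF = \cF_0 \oplus \cF_1$ with $\cF_1 := \ran T_0$. Writing any $T \in \sL(\cE,\cF)$ as a block matrix $T = \left(\begin{smallmatrix} T_{00} & T_{01} \\ T_{10} & T_{11} \end{smallmatrix}\right)$ with $T_{ij}:\cE_j \to \cF_i$, these choices force $T_0$ to have $T_{00} = T_{01} = T_{10} = 0$, while $T_{11} = T_0|_{\cE_1}:\cE_1 \to \cF_1$ is a continuous bijection of Banach spaces, hence an isomorphism by the Open Mapping Theorem.

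Next I would use that invertibility is an open condition: the map $T \mapsto T_{11}$ is continuous, so $T_{11} \in \sL(\cE_1,\cF_1)$ remains invertible on a ball about $T_0$, with $T \mapsto T_{11}^{-1}$ continuous there. On this ball I would perform the block (Schur) factorization
\[
T
=
\begin{pmatrix} I & T_{01}T_{11}^{-1} \\ 0 & I \end{pmatrix}
\begin{pmatrix} S & 0 \\ 0 & T_{11} \end{pmatrix}
\begin{pmatrix} I & 0 \\ T_{11}^{-1}T_{10} & I \end{pmatrix},
\qquad
S := T_{00} - T_{01}T_{11}^{-1}T_{10},
\]
whose two outer unipotent factors are bounded and boundedly invertible (their inverses are obtained by negating the off-diagonal block), and whose middle factor is $\diag(S, T_{11})$ with $S:\cE_0 \to \cF_0$ a map between the finite-dimensional spaces $\Ker T_0$ and the complement $\cF_0 \cong \Coker T_0$. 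Composition with invertible operators preserves the Fredholm property and the index and induces isomorphisms on kernels and cokernels, so $T$ inherits all of these invariants from $\diag(S,T_{11})$.

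All three assertions then follow from this normal form. Since $T_{11}$ is invertible and $S$ acts between finite-dimensional spaces, $\diag(S,T_{11})$ is Fredholm, and hence so is $T$, giving openness of $\Fred(\cE,\cF)$. For the kernel, $\Ker T \cong \Ker S$, so $\dim\Ker T = \dim\Ker S \leq \dim\cE_0 = \dim\Ker T_0$ throughout the ball, which is exactly upper semi-continuity of $T \mapsto \dim\Ker T$ at $T_0$. For the index, $\Ind T_{11} = 0$ and, by rank-nullity, the index of a linear map $S$ between finite-dimensional spaces equals $\dim\cE_0 - \dim\cF_0$ independently of $S$; hence $\Ind T = \dim\Ker T_0 - \dim\Coker T_0 = \Ind T_0$ on the whole ball. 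Thus $T \mapsto \Ind T$ is locally constant, and a locally constant integer-valued function is constant on each connected component of $\Fred(\cE,\cF)$.

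The main obstacle is bookkeeping rather than depth. I expect the only genuine care to be needed in verifying that the chosen complements are \emph{closed}, so that the block decomposition is a topological direct sum and the Open Mapping Theorem legitimately applies to $T_{11}$, and in confirming that the outer factors in the factorization are bounded with bounded inverses on the ball where $T_{11}$ is invertible (this rests on the continuity of $T \mapsto T_{11}^{-1}$). Once these routine closedness and continuity checks are in place, the purely algebraic reduction to the finite-dimensional Schur complement $S$ yields openness, upper semi-continuity of the kernel dimension, and local constancy of the index simultaneously.
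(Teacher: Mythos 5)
Your argument is correct. The paper does not prove this statement at all---it is quoted as a known result with a citation to H\"ormander \cite[Corollary 19.1.6]{Hormander_v3}---so there is no internal proof to compare against; what you have written is a complete, self-contained, and standard replacement for that citation. The Schur-complement reduction is sound: the two decompositions $\cE = \Ker T_0 \oplus \cE_1$ and $\cF = \cF_0 \oplus \ran T_0$ are topological direct sums because a finite-dimensional subspace and a closed subspace of finite codimension are always complemented in a Banach space; $T_{11}$ is an isomorphism by the Open Mapping Theorem; $T \mapsto T_{11}$ is continuous (it is $T$ pre- and post-composed with fixed bounded inclusion and projection maps), and inversion is continuous on the open set of invertibles, so the factorization is valid on a ball about $T_0$ with all outer factors boundedly invertible. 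From $T$ being equivalent to $\diag(S,T_{11})$ one reads off $\Ker T \cong \Ker S$, $\Coker T \cong \Coker S$, hence Fredholmness, the bound $\dim\Ker T \leq \dim\Ker T_0$, and $\Ind T = \Ind S = \dim\Ker T_0 - \dim\Coker T_0 = \Ind T_0$, which gives local constancy and therefore constancy on connected components. The only point worth making explicit beyond what you wrote is the complementation fact just mentioned, since that is what licenses treating the block decomposition as topological; everything else is, as you say, bookkeeping. An alternative (and the one closer in spirit to H\"ormander's treatment) is to characterize Fredholm operators by the existence of a parametrix invertible modulo compact or finite-rank operators and perturb that parametrix, which generalizes more readily to operator algebras but yields the same three conclusions; your finite-dimensional reduction is, if anything, the more elementary and transparent route for the purposes of this paper.
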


In particular, given $T \in \Fred(\cE,\cF)$, there exists $\eps = \eps(T) \in (0,1]$ such that if $S \in \sL(\cE,\cF)$ obeys $\|S-T\|_{\sL(\cE,\cF)} < \eps$, then $S \in \Fred(\cE,\cF)$ and $\Ind S = \Ind T$. We can now prove that the Hessian operator, $\sM'(f)$, is Fredholm with index zero.

\begin{prop}[Fredholm and index zero properties of the Hessian operator for the harmonic map energy functional]
\label{prop:fredharmd}
Let $d\geq 2$ and $k \geq 1$ be integers and $p\in (1,\infty)$ be such that $kp > d$. Let $(M,g)$ and $(N,h)$ be closed, smooth Riemannian manifolds, with $M$ of dimension $d$ and $(N,h) \subset \RR^n$ a $C^\infty$ isometric embedding.
If $f \in W^{k,p}(M;N)$, then
\[
\sM'(f): W^{k,p}(M;f^*TN) \to W^{k-2,p}(M;f^*TN),
\]
is a Fredholm operator with index zero.
\end{prop}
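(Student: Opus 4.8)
The plan is to show that $\sM'(f)$ is a second-order elliptic operator on the bundle $f^*TN$ over the closed manifold $M$, and then to deduce the Fredholm and index-zero properties by exhibiting it as a compact perturbation of an invertible operator. Starting from the expression \eqref{eq:harmonichessian}, I would write
\[
\sM'(f)v = d\pi_h(f)\Delta_g v + C_f v, \qquad C_f v := d^2\pi_h(f)(v,\cdot)^*\Delta_g f,
\]
where the first summand is the principal part and $C_f$ is of order zero in $v$. Since $d\pi_h(f)$ restricts to the identity on $f^*TN$, the operator $A_f := d\pi_h(f)\Delta_g$, acting on sections of $f^*TN$, has principal symbol $|\xi|_g^2\,\id_{f^*TN}$ and is therefore elliptic of order two; the key point is that its leading coefficients come from $\Delta_g$ and are smooth, the map $f$ entering only through the continuous zeroth-order factor $d\pi_h(f)\in W^{k,p}(M;\End\RR^n)\subset C(M;\End\RR^n)$.

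To handle $A_f$ I would first record that, for smooth sections $v,w$ of $f^*TN$, orthogonality of the projection $d\pi_h(f)$ together with integration by parts gives $(A_f v,w)_{L^2} = (\Delta_g v, w)_{L^2} = (dv, dw)_{L^2}$, so that $A_f$ is symmetric and nonnegative on $L^2(M;f^*TN)$. Consequently $A_f + \id$ is a positive, formally self-adjoint elliptic operator, and hence invertible as a map $W^{k,p}(M;f^*TN)\to W^{k-2,p}(M;f^*TN)$ once the corresponding elliptic $L^p$ a priori estimate is in hand; being invertible, it has index zero. The inclusion $W^{k,p}(M;f^*TN)\hookrightarrow W^{k-2,p}(M;f^*TN)$ is compact by the Rellich--Kondrachov theorem, so $A_f = (A_f+\id) - \id$ is a compact perturbation of an invertible operator and is therefore Fredholm of index zero.

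It remains to treat $C_f$. Because $C_f$ involves no derivatives of $v$ and its coefficients $d^2\pi_h(f)$ (of class $W^{k,p}$) and $\Delta_g f$ (of class $W^{k-2,p}$) are fixed, I would factor $C_f$ through the compact embedding $W^{k,p}(M;f^*TN)\hookrightarrow W^{k-1,p}(M;f^*TN)$ followed by the bounded Sobolev multiplication $W^{k-1,p}\times W^{k-2,p}\to W^{k-2,p}$ supplied by Lemma \ref{lem:Continuous_embedding_Wk-2p_into_Wkpdual} and the multiplication estimates of the Appendix; this shows that $C_f:W^{k,p}(M;f^*TN)\to W^{k-2,p}(M;f^*TN)$ is compact. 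Since the sum of a Fredholm operator of index zero and a compact operator is again Fredholm of index zero, I conclude that $\sM'(f) = A_f + C_f$ is Fredholm of index zero, as desired.

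The principal obstacle is the limited regularity of the coefficients: because $f$ is only of class $W^{k,p}$ (so that $df\in W^{k-1,p}$ and $\Delta_g f\in W^{k-2,p}$), one cannot simply invoke the classical smooth-coefficient elliptic theory, and the invertibility of $A_f + \id$ between $W^{k,p}$ and $W^{k-2,p}$ must be justified by an $L^p$ elliptic estimate valid at this level of regularity --- here the essential simplification is that the leading part $\Delta_g$ has smooth coefficients and $f$ enters only through the continuous factor $d\pi_h(f)$. A cleaner alternative, which sidesteps proving the rough-coefficient estimate from scratch, is to reduce to the case of smooth $f$: approximating $f$ by smooth maps $f_j\to f$ in $W^{k,p}(M;N)$, one works in the ambient bundle via the topological splitting $W^{j,p}(M;\RR^n) = W^{j,p}(M;f^*TN)\oplus W^{j,p}(M;f^*T^\perp N)$ (for $j=k,k-2$) induced by $d\pi_h(f)$, so that the $f$-dependence of the bundle is absorbed, and then applies Theorem \ref{thm:Openness_set_Fredholm_operators} together with the continuity of $f\mapsto\sM'(f)$ from Lemma \ref{lem:Gradient_harmonic_map_operator_smooth} to transport the Fredholm and index-zero properties from the smooth $f_j$ to $f$.
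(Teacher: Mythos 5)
Your main argument has two genuine gaps, both stemming from the low regularity of $f$. First, the invertibility of $A_f + \id \colon W^{k,p}(M;f^*TN) \to W^{k-2,p}(M;f^*TN)$ is exactly the hard point, and you defer it to an unproved ``elliptic $L^p$ a priori estimate.'' Your justification --- that the leading coefficients come from $\Delta_g$ and are smooth, with $f$ entering only through a zeroth-order factor --- is not right: in $A_f = d\pi_h(f)\Delta_g$ the factor $d\pi_h(f)$ multiplies the top-order term, so the leading coefficients of $A_f$ (and the bundle $f^*TN$ itself) have only $W^{k,p}$ regularity; for $k=1$ and $p>d$ this is barely H\"older continuous and the target space is $W^{-1,p}$, so neither classical elliptic regularity nor the Friedrichs-extension argument in $L^2$ transfers without substantial extra work. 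Second, the compactness of $C_f$ does not follow from the factorization you propose: the multiplication $W^{k-1,p}\times W^{k-2,p}\to W^{k-2,p}$ is not even well defined for $k=1$ (it would read $L^p\times W^{-1,p}\to W^{-1,p}$) and fails for $k=2$ whenever $d/2<p\leq d$ (it would read $W^{1,p}\times L^p\to L^p$, which requires $p>d$; take $d=4$, $p=3$); moreover Lemma \ref{lem:Continuous_embedding_Wk-2p_into_Wkpdual} is an embedding statement, not a multiplication estimate. So the decomposition $\sM'(f)=A_f+C_f$ yields the proposition at the stated level of generality only after one proves precisely the rough-coefficient facts you have set aside.

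Your closing ``cleaner alternative'' is, in essence, the paper's actual proof, and you should promote it to the main argument. For smooth $\tilde f$ the paper obtains Fredholmness and index zero from cited elliptic results, using \eqref{eq:harmonichessian_Kwon_thesis}, ellipticity with $C^\infty$ coefficients, and the fact that $\sM'(\tilde f)-\sM'(\tilde f)^*$ is first order; your positive-plus-compact argument would serve equally well in this smooth setting, where the coefficients really are regular. Then, for $f$ in a small $W^{k,p}$-ball around $\tilde f$, the isomorphisms $d\pi_h(f)$ and $d\pi_h(\tilde f)$ of Lemma \ref{lem:Isomorphism_Sobolev_spaces_sections_two_nearby_maps} conjugate $\sM'(f)$ to an operator between the \emph{fixed} spaces $W^{k,p}(M;\tilde f^*TN)$ and $W^{k-2,p}(M;\tilde f^*TN)$, and the continuity statements (Claims \ref{claim:Differential_nearest-point_projection_continuous_function_of_f_in_Wkp_maps}--\ref{claim:Hessian_energy_continuous_function_of_f_in_Wkp_maps}) show this conjugate is operator-norm close to $\sM'(\tilde f)$, so Theorem \ref{thm:Openness_set_Fredholm_operators} applies. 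Note that the continuity you need is of $f\mapsto\sM'(f)$ in operator norm \emph{after} conjugation into a common domain and codomain; citing Lemma \ref{lem:Gradient_harmonic_map_operator_smooth} (smoothness of $f\mapsto\sM(f)$) alone does not put the operators into a single space $\sL(\cE,\cF)$ on which openness of the Fredholm set can be invoked.
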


\begin{rmk}[Further applications]
The proof of Proposition \ref{prop:fredharmd} could be adapted to extend \cite[Lemma 41.1]{Feehan_yang_mills_gradient_flow_v4} and \cite[Theorem A.1]{Feehan_Maridakis_Lojasiewicz-Simon_coupled_Yang-Mills_v6} from the case of elliptic partial differential operators with $C^\infty$ coefficients to those with suitable Sobolev coefficients.
\end{rmk}

\begin{proof}[Proof of Proposition \ref{prop:fredharmd}]
If $\tilde f\in C^\infty(M;N)$, then $\tilde f^*TN$ is a $C^\infty$ vector bundle and the conclusion is an immediate consequence of \cite[Lemma 41.1]{Feehan_yang_mills_gradient_flow_v4} or \cite[Theorem A.1]{Feehan_Maridakis_Lojasiewicz-Simon_coupled_Yang-Mills_v6}, since the expression \eqref{eq:harmonichessian_Kwon_thesis} tells us that
\[
\sM'(\tilde f):C^\infty(M;\tilde f^*TN) \to C^\infty(M;\tilde f^*TN)
\]
is an elliptic, linear, second-order partial differential operator with $C^\infty$ coefficients and $\sM'(\tilde f) - \sM'(\tilde f)^*$ is a first-order partial differential operator.

For the remainder of the proof, we focus on the case of maps in $W^{k,p}(M;N)$. Since $C^\infty(M;N)$ is dense in $W^{k,p}(M;N)$, the space $W^{k,p}(M;N)$ has an open cover consisting of $W^{k,p}(M;N)$-open balls centered at maps in $C^\infty(M;N)$. Given $\tilde f\in C^\infty(M;N)$, then for all $f \in W^{k,p}(M;N)$ that are $W^{k,p}(M;N)$-close enough to $\tilde f$, Lemma \ref{lem:Isomorphism_Sobolev_spaces_sections_two_nearby_maps} provides isomorphisms of Banach spaces,
\begin{align*}
d\pi_h(f):W^{k,p}(M;\tilde f^*TN) &\cong W^{k,p}(M;f^*TN),
\\
d\pi_h(\tilde f):W^{k-2,p}(M;f^*TN) &\cong W^{k-2,p}(M;\tilde f^*TN).
\end{align*}
The composition of a Fredholm operator with index zero and two invertible operators is a Fredholm operator with index zero and so the composition,
\[
d\pi_h(\tilde f)\circ\sM'(f)\circ d\pi_h(f): W^{k,p}(M;\tilde f^*TN) \to W^{k-2,p}(M;\tilde f^*TN),
\]
is a Fredholm operator with index zero if and only if
\[
\sM'(f): W^{k,p}(M;f^*TN) \to W^{k-2,p}(M;f^*TN),
\]
is a Fredholm operator with index zero.

Given $\eps \in (0,1]$, we claim that there exists $\delta = \delta(\tilde f, g,h,k,p,\eps) \in (0,1]$ with the following significance. If $f\in W^{k,p}(M;N)$ obeys
\[
\|f - \tilde f\|_{W^{k,p}(M;\RR^n)} < \delta,
\]
then
\begin{equation}
\label{eq:Difference_Hessian_operators_two_nearby_maps}
\|d\pi_h(\tilde f)\circ\sM'(f)\circ d\pi_h(f) - \sM'(\tilde f)\|_{\sL(W^{k,p}(M;\tilde f^*TN), W^{k-2,p}(M;\tilde f^*TN))} < \eps.
\end{equation}
Assuming \eqref{eq:Difference_Hessian_operators_two_nearby_maps}, Theorem \ref{thm:Openness_set_Fredholm_operators} implies that $\sM'(f)$ is Fredholm with index zero, the desired conclusion for $f\in W^{k,p}(M;N)$. To prove \eqref{eq:Difference_Hessian_operators_two_nearby_maps}, it suffices to establish the following claims.

\begin{claim}[Continuity of the differential of the nearest-point projection map]
\label{claim:Differential_nearest-point_projection_continuous_function_of_f_in_Wkp_maps}
For $l=k$ or $k-2$, the following map is continuous,
\[
W^{k,p}(M;N) \ni f \mapsto d\pi_h(f) \in \sL\left(W^{l,p}(M;\RR^n), W^{l,p}(M;f^*TN)\right) \subset \sL\left(W^{l,p}(M;\RR^n)\right).
\]
\end{claim}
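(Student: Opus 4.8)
The plan is to factor the operator-valued assignment $f \mapsto d\pi_h(f)$ as a continuous map into a Banach space of multipliers followed by a bounded, bilinear Sobolev multiplication, thereby reducing the claim to estimates already recorded in the proof of Lemma \ref{lem:Gradient_harmonic_map_operator_smooth}. First, I would recall from \eqref{eq:Sobolev_smooth_projection} that, under the present hypotheses on $d$, $k$, $p$, the map
\[
W^{k,p}(M;N) \ni f \mapsto d\pi_h(f) \in W^{k,p}(M;\End(\RR^n))
\]
is $C^\infty$, in particular continuous; this uses that $d\pi_h \in C^\infty(\sO;\End(\RR^n))$ together with the continuous Sobolev embedding $W^{k,p}(M;N) \subset C(M;N)$, so that $W^{k,p}(M;N) \subset W^{k,p}(M;\sO)$.

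Next, I would verify that pointwise multiplication,
\[
W^{k,p}(M;\End(\RR^n)) \times W^{l,p}(M;\RR^n) \ni (a,v) \mapsto av \in W^{l,p}(M;\RR^n),
\]
is bounded and bilinear for both $l=k$ and $l=k-2$, and therefore that the induced linear map $a \mapsto (v \mapsto av)$ is bounded, hence continuous, from $W^{k,p}(M;\End(\RR^n))$ into $\sL(W^{l,p}(M;\RR^n))$. For $l=k$ this is the module property attached to the Banach algebra $W^{k,p}(M;\RR)$ from \cite[Theorem 4.39]{AdamsFournier}, while for $l=k-2$ it is the Sobolev multiplication estimate already invoked in the proof of Lemma \ref{lem:Gradient_harmonic_map_operator_smooth}, namely \cite[Corollary 9.7]{PalaisFoundationGlobal} when $k \geq 2$ and the proof of Lemma \ref{lem:Isomorphism_Sobolev_spaces_sections_two_nearby_maps} when $k=1$; in each case the matrix-times-vector product reduces componentwise to scalar products $W^{k,p}(M;\RR) \times W^{l,p}(M;\RR) \to W^{l,p}(M;\RR)$. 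I would also note that, choosing $a = d\pi_h(f)$, the identity $(d\pi_h(f)v)(x) = d\pi_h(f(x))v(x) \in T_{f(x)}N$ (valid because $d\pi_h(f(x))$ is orthogonal projection onto $T_{f(x)}N$) shows that the operator indeed carries $W^{l,p}(M;\RR^n)$ into $W^{l,p}(M;f^*TN)$, matching the asserted codomain.

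The claim then follows by composing the continuous map of the first step with the bounded linear map of the second step. I expect the only real obstacle to be the boundedness of multiplication in the case $l=k-2$, and most acutely the borderline case $k=1$, where $l=-1$ and one must make sense of the product of a $W^{1,p}$ matrix field with a distribution in $W^{-1,p}$. That negative-order multiplication is controlled by a duality argument rather than by a direct pointwise estimate, and is precisely the content carried by the proof of Lemma \ref{lem:Isomorphism_Sobolev_spaces_sections_two_nearby_maps} (compare also the continuous embedding $W^{k-2,p}(M;\RR) \subset (W^{k,p}(M;\RR))^*$ of Lemma \ref{lem:Continuous_embedding_Wk-2p_into_Wkpdual}); once that estimate is in hand, every remaining step is a formal composition of continuous maps.
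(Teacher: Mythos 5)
Your proposal is correct and follows essentially the same route as the paper's proof: both factor the assignment through the smooth map $f \mapsto d\pi_h(f) \in W^{k,p}(M;\End(\RR^n))$ from \eqref{eq:Sobolev_smooth_projection} and then invoke the continuous $W^{k,p}(M;\RR)$-module structure on $W^{l,p}(M;\RR)$ (Palais' multiplication theorems for $l \geq 0$, the duality argument from the proof of Lemma \ref{lem:Isomorphism_Sobolev_spaces_sections_two_nearby_maps} for the negative-order case $k=1$, $l=-1$) to obtain the continuous embedding $W^{k,p}(M;\End(\RR^n)) \subset \sL\left(W^{l,p}(M;\RR^n)\right)$. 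Your explicit identification of the borderline case and of why the image lies in $W^{l,p}(M;f^*TN)$ are accurate refinements of the same argument, not a different one.
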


\begin{proof}[Proof of Claim \ref{claim:Differential_nearest-point_projection_continuous_function_of_f_in_Wkp_maps}]
By \eqref{eq:Sobolev_smooth_projection}, the map
\[
W^{k,p}(M;N) \ni f \mapsto d\pi_h(f) \in W^{k,p}(M;\End(\RR^n)),
\]
is smooth. Also, there is a continuous embedding, $W^{k,p}(M;\End(\RR^n))\subset \sL\left(W^{l,p}(M;\RR^n)\right)$. To see this, observe that the bilinear map,
\[
W^{k,p}(M;\End(\RR^n)) \times W^{l,p}(M;\RR^n)\ni (\alpha, \xi) \to \alpha(\xi) \in W^{l,p}(M;\RR^n),
\]
is continuous since \cite[Corollary 9.7 and Theorem 9.13]{PalaisFoundationGlobal} and the proof of Lemma \ref{lem:Isomorphism_Sobolev_spaces_sections_two_nearby_maps} imply that $W^{l,p}(M;\RR^n)$ is a continuous $W^{k,p}(M; \End(\RR^n))$-module when $|l| \leq k$. Thus,
\begin{align*}
\|\alpha\|_{\sL\left(W^{l,p}(M;\RR^n)\right)}
&= \sup_{\begin{subarray}{c}\xi \in W^{l,p}(M;\RR^n) \less\{0\} \\ \|\xi\|_{W^{l,p}(M;\RR^n)}=1\end{subarray}} \|\alpha(\xi)\|_{W^{l,p}(M;\RR^n)}
\\
&\leq C\|\alpha\|_{W^{l,p}(M;\End(\RR^n))}.
\end{align*}
The conclusion follows.
\end{proof}

\begin{claim}[Continuity of the Hessian of the nearest-point projection map]
\label{claim:Hessian_nearest-point_projection_continuous_function_of_f_in_Wkp_maps}
The following map is continuous,
\begin{multline*}
W^{k,p}(M;N) \ni f \mapsto d^2\pi_h(f) \in \sL\left(W^{k,p}(M;f^*TN)\times W^{k,p}(M;f^*TN), W^{k,p}(M;f^*(TN)^\perp)\right)
\\
\subset \sL\left(W^{k,p}(M;\RR^n)\times W^{k,p}(M;\RR^n), W^{k,p}(M;\RR^n)\right).
\end{multline*}
\end{claim}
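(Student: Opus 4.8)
The plan is to mirror, essentially verbatim, the proof of Claim \ref{claim:Differential_nearest-point_projection_continuous_function_of_f_in_Wkp_maps}, replacing the first differential $d\pi_h$ by the Hessian $d^2\pi_h$ and linear maps by bilinear maps. First I would record the pointwise smoothness: since $\pi_h:\sO\to N\subset\RR^n$ is $C^\infty$ on the tubular neighborhood, its second differential defines a $C^\infty$ map $\sO\ni y\mapsto d^2\pi_h(y)\in\Hom(\RR^n\otimes\RR^n,\RR^n)$, where $d^2\pi_h(y)$ is a symmetric bilinear map $\RR^n\times\RR^n\to\RR^n$. By the identity \eqref{eq:Hessian_nearest-point_projection}, its restriction to $T_yN\times T_yN$ equals $-A_h(y)$ and takes values in $(T_yN)^\perp$; this pointwise fact is exactly what will account for the refined codomain $W^{k,p}(M;f^*(TN)^\perp)$ in the statement.

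Next, just as in \eqref{eq:Sobolev_smooth_projection}, I would invoke Palais \cite[Corollary 9.10]{PalaisFoundationGlobal} (together with the Chain Rule for $C^\infty$ maps of Banach manifolds) to conclude that composition with the $C^\infty$ map $d^2\pi_h$ gives a $C^\infty$, hence continuous, map
\[
W^{k,p}(M;N)\ni f\mapsto d^2\pi_h(f)\in W^{k,p}\left(M;\Hom(\RR^n\otimes\RR^n,\RR^n)\right).
\]
The key step is then to establish a bounded embedding
\[
W^{k,p}\left(M;\Hom(\RR^n\otimes\RR^n,\RR^n)\right)\subset \sL\left(W^{k,p}(M;\RR^n)\times W^{k,p}(M;\RR^n),\,W^{k,p}(M;\RR^n)\right),
\]
realized by pointwise evaluation $\beta\mapsto\bigl((\xi,\zeta)\mapsto\beta(\xi,\zeta)\bigr)$. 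Where the linear case in Claim \ref{claim:Differential_nearest-point_projection_continuous_function_of_f_in_Wkp_maps} reduced to the $W^{k,p}$-module property, here I would use that $W^{k,p}(M;\RR)$ is a Banach algebra by \cite[Theorem 4.39]{AdamsFournier}, so that the trilinear pointwise-multiplication map $W^{k,p}\times W^{k,p}\times W^{k,p}\to W^{k,p}$ is bounded (with \cite[Corollary 9.7]{PalaisFoundationGlobal} and the $k=1$ argument from Lemma \ref{lem:Isomorphism_Sobolev_spaces_sections_two_nearby_maps} covering the borderline exponent). Writing $\beta(\xi,\zeta)$ in components as a finite sum of products of the components of $\beta,\xi,\zeta$ then yields
\[
\|\beta(\xi,\zeta)\|_{W^{k,p}(M)}\le C\,\|\beta\|_{W^{k,p}(M)}\|\xi\|_{W^{k,p}(M)}\|\zeta\|_{W^{k,p}(M)},
\]
and hence $\|\beta\|_{\sL(\cdots)}\le C\|\beta\|_{W^{k,p}(M)}$, giving the embedding.

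Composing the continuous map of the second step with the bounded embedding of the third step produces the asserted continuity into the larger operator space, and the refinement to $W^{k,p}(M;f^*TN)\times W^{k,p}(M;f^*TN)\to W^{k,p}(M;f^*(TN)^\perp)$ is immediate from the pointwise identity \eqref{eq:Hessian_nearest-point_projection}. I expect the third step --- the Banach-algebra/module estimate for the bilinear evaluation --- to be the only substantive point, but it is mild given the cited multiplication results and is entirely parallel to the linear case already handled in Claim \ref{claim:Differential_nearest-point_projection_continuous_function_of_f_in_Wkp_maps}.
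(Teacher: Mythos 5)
Your proposal is correct and follows essentially the same route as the paper: smoothness of $f \mapsto d^2\pi_h(f) \in W^{k,p}(M;\Hom(\RR^n\otimes\RR^n;\RR^n))$ via the argument for \eqref{eq:Sobolev_smooth_projection}, composed with a continuous embedding of that Sobolev space into the space of bounded bilinear maps. The paper leaves the embedding step as ``an argument similar to'' Claim \ref{claim:Differential_nearest-point_projection_continuous_function_of_f_in_Wkp_maps}, and your explicit use of the Banach algebra property of $W^{k,p}(M;\RR)$ (valid since $kp>d$) is exactly the intended filling-in of that step.
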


\begin{proof}[Proof of Claim \ref{claim:Hessian_nearest-point_projection_continuous_function_of_f_in_Wkp_maps}]
From the proof of \eqref{eq:Sobolev_smooth_projection}, the map
\[
W^{k,p}(M;N) \ni f \mapsto d^2\pi_h(f) \in W^{k,p}(M;\Hom(\RR^n\otimes \RR^n; \RR^n)),
\]
is smooth. Also, by an argument similar to that used in the proof of Claim \ref{claim:Differential_nearest-point_projection_continuous_function_of_f_in_Wkp_maps}, there is a continuous embedding,
\[
 W^{k,p}(M;\Hom(\RR^n\otimes \RR^n; \RR^n)) \subset \sL\left(W^{k,p}(M;\RR^n)\times W^{k,p}(M;\RR^n), W^{k,p}(M;\RR^n)\right).
\]
The conclusion follows by composing the two maps.
\end{proof}

\begin{claim}[Continuity of the Hessian of the energy function]
\label{claim:Hessian_energy_continuous_function_of_f_in_Wkp_maps}
The following map is continuous,
\begin{multline*}
W^{k,p}(M;N) \ni f \mapsto \sM'(f) \in \sL\left(W^{k,p}(M;f^*TN), W^{k-2,p}(M;f^*TN)\right)
\\
\subset \sL\left(W^{k,p}(M;\RR^n), W^{k-2,p}(M;\RR^n)\right).
\end{multline*}
\end{claim}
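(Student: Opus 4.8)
The plan is to prove continuity of $f \mapsto \sM'(f)$ directly from the explicit formula \eqref{eq:harmonichessian},
\[
\sM'(f)v = d\pi_h(f)\Delta_g v + d^2\pi_h(f)(v,\cdot)^*\Delta_g f,
\quad\forall\, v \in W^{k,p}(M;f^*TN),
\]
by decomposing $\sM'(f) = T_1(f) + T_2(f)$, where $T_1(f)v := d\pi_h(f)\Delta_g v$ and $T_2(f)v := d^2\pi_h(f)(v,\cdot)^*\Delta_g f$. Since a sum of continuous maps into $\sL(W^{k,p}(M;\RR^n), W^{k-2,p}(M;\RR^n))$ is continuous, it suffices to treat $T_1$ and $T_2$ separately, and in each case to recognize the term as a composition or product of building blocks already shown to depend continuously on $f$ in Claims \ref{claim:Differential_nearest-point_projection_continuous_function_of_f_in_Wkp_maps} and \ref{claim:Hessian_nearest-point_projection_continuous_function_of_f_in_Wkp_maps}.

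For $T_1$, I would note that $\Delta_g : W^{k,p}(M;\RR^n) \to W^{k-2,p}(M;\RR^n)$ is a single fixed bounded linear operator, independent of $f$, while $d\pi_h(f)$ acts by pointwise multiplication. Applying Claim \ref{claim:Differential_nearest-point_projection_continuous_function_of_f_in_Wkp_maps} with $l = k-2$ gives continuity of $f \mapsto d\pi_h(f) \in \sL(W^{k-2,p}(M;\RR^n))$, and post-composing with the fixed $\Delta_g$, which is the bounded linear (hence continuous) operation $A \mapsto A\Delta_g$, yields continuity of $f \mapsto T_1(f)$.

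For $T_2$, I would use that $f \mapsto d^2\pi_h(f) \in W^{k,p}(M;\Hom(\RR^n\otimes\RR^n;\RR^n))$ is smooth (from the proof of \eqref{eq:Sobolev_smooth_projection}, as in Claim \ref{claim:Hessian_nearest-point_projection_continuous_function_of_f_in_Wkp_maps}) and that $f \mapsto \Delta_g f \in W^{k-2,p}(M;\RR^n)$ is bounded linear. Since by \eqref{eq:Hessian_nearest-point_projection} the quantity $d^2\pi_h(f)(v,\cdot)^*\Delta_g f$ is a pointwise algebraic contraction of $d^2\pi_h(f)$ with $v$ and $\Delta_g f$, the continuous Sobolev module-multiplication maps furnish the operator-norm estimate
\[
\|T_2(f)\| \leq C\,\|d^2\pi_h(f)\|_{W^{k,p}(M)}\,\|\Delta_g f\|_{W^{k-2,p}(M)}.
\]
I would then estimate the difference $T_2(f) - T_2(f_1)$ by adding and subtracting $d^2\pi_h(f_1)(v,\cdot)^*\Delta_g f$ to obtain
\[
\|T_2(f) - T_2(f_1)\| \leq C\,\|d^2\pi_h(f) - d^2\pi_h(f_1)\|_{W^{k,p}(M)}\,\|\Delta_g f\|_{W^{k-2,p}(M)} + C\,\|d^2\pi_h(f_1)\|_{W^{k,p}(M)}\,\|\Delta_g(f - f_1)\|_{W^{k-2,p}(M)},
\]
whose right-hand side tends to $0$ as $f \to f_1$ in $W^{k,p}(M;N)$, by continuity of $f \mapsto d^2\pi_h(f)$ and boundedness of $\Delta_g$; this gives continuity of $T_2$ and hence the claim.

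The one point requiring care is the continuity of the Sobolev module-multiplication maps $W^{k,p} \times W^{l,p} \to W^{l,p}$ for $|l| \leq k$ underlying both estimates, which in the borderline case $k = 1$ involves the negative-order space $W^{-1,p}$. This is precisely the content invoked in the proof of Claim \ref{claim:Differential_nearest-point_projection_continuous_function_of_f_in_Wkp_maps} via \cite[Corollary 9.7 and Theorem 9.13]{PalaisFoundationGlobal} and the proof of Lemma \ref{lem:Isomorphism_Sobolev_spaces_sections_two_nearby_maps}, establishing that $W^{l,p}(M;\RR^n)$ is a continuous $W^{k,p}(M;\End(\RR^n))$-module for $|l| \leq k$. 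Granting this, all the remaining estimates are routine.
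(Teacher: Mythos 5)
Your proposal is correct and follows essentially the same route as the paper: the paper's (very terse) proof likewise derives the claim from the expression \eqref{eq:harmonichessian} for $\sM'(f)$, the continuity of the $W^{k,p}(M;\RR)$-module structure on $W^{k-2,p}(M;\RR)$, and Claims \ref{claim:Differential_nearest-point_projection_continuous_function_of_f_in_Wkp_maps} and \ref{claim:Hessian_nearest-point_projection_continuous_function_of_f_in_Wkp_maps}. Your decomposition into $T_1$ and $T_2$ with the add-and-subtract estimate simply spells out the details the paper leaves implicit.
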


\begin{proof}[Proof of Claim \ref{claim:Hessian_energy_continuous_function_of_f_in_Wkp_maps}]
The conclusion follows from the expression \eqref{eq:harmonichessian} for $\sM'(f)$, the fact that the Sobolev space,
$W^{k-2,p}(M;\RR)$, is a continuous $W^{k,p}(M;\RR)$-module (see the proof of Claim \ref{claim:Differential_nearest-point_projection_continuous_function_of_f_in_Wkp_maps}), and Claims
\ref{claim:Differential_nearest-point_projection_continuous_function_of_f_in_Wkp_maps} and \ref{claim:Hessian_nearest-point_projection_continuous_function_of_f_in_Wkp_maps}.
\end{proof}

But the inequality \eqref{eq:Difference_Hessian_operators_two_nearby_maps} now follows from Claims \ref{claim:Differential_nearest-point_projection_continuous_function_of_f_in_Wkp_maps} and \ref{claim:Hessian_energy_continuous_function_of_f_in_Wkp_maps} and this completes the proof of Proposition \ref{prop:fredharmd}.
\end{proof}

In Lemma \ref{lem:Euler-Lagrange_equation_and_gradient_harmonic_map_energy_functional}, we computed the gradient, $\sM(f)$, of $\sE:W^{1,2}(M;N)\cap C(M;N)\to\RR$ at a map $f$ with respect to the inner product on $L^2(M;f^*TN)$. However, in order to apply Theorem~\ref{mainthm:Lojasiewicz-Simon_gradient_inequality}, we shall instead need to compute the gradient of $\sE:W^{k,p}(M;N)\to\RR$ with respect to the inner product on the Hilbert space, $L^2(M;f_\infty^*TN)$, defined by a \emph{fixed} map $f_\infty$. For this purpose, we shall need the forthcoming generalization of Remark \ref{rmk:chart_properties}.

\begin{lem}[Isomorphism of Sobolev spaces of sections defined by two nearby maps]
\label{lem:Isomorphism_Sobolev_spaces_sections_two_nearby_maps}
Let $d,k$ be integers and $p$ a constant obeying the hypotheses of Proposition \ref{prop:Banach_manifold}. Let $(M,g)$ and $(N,h)$ be closed, smooth Riemannian manifolds, with $M$ of dimension $d$ and $(N,h) \subset \RR^n$ a $C^\infty$ isometric embedding. Then there is a constant $\eps = \eps(g,h,k,p) \in (0,1]$ with the following significance. If $f, f_\infty \in W^{k,p}(M;N)$ obey $\|f-f_\infty\|_{W^{k,p}(M)} < \eps$ and $l\in\ZZ$ is an integer such that $|l| \leq k$, then
\begin{equation}
\label{eq:Isomorphism_Sobolev_spaces_sections_two_nearby_maps}
W^{l,p}(M;f^*TN) \ni v \mapsto d\pi_h(f_\infty)(v) \in W^{l,p}(M;f_\infty^*TN)
\end{equation}
is an isomorphism of Banach spaces that reduces to the identity at $f=f_\infty$.
\end{lem}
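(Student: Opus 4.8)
The plan is to invert the operator in \eqref{eq:Isomorphism_Sobolev_spaces_sections_two_nearby_maps} by realizing it, together with the companion operator obtained by interchanging $f$ and $f_\infty$, as small perturbations of the identity and applying a Neumann series. For $|l|\leq k$ I regard $W^{l,p}(M;f^*TN)$ as the image of the bounded operator $d\pi_h(f)$ acting on $W^{l,p}(M;\RR^n)$ by pointwise application of the fiberwise orthogonal projection $d\pi_h(f(x)):\RR^n\to T_{f(x)}N$ of \eqref{eq:Differential_nearest_point_projection_as_orthogonal_projection}; for $l\geq 0$ this is the usual space of Sobolev sections and for $l<0$ it serves as the definition. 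Since $d\pi_h(f)$ is pointwise idempotent, it is an idempotent in $\sL(W^{l,p}(M;\RR^n))$ with closed, complemented image. Writing $T_f := d\pi_h(f_\infty)|_{W^{l,p}(M;f^*TN)}$ for the map \eqref{eq:Isomorphism_Sobolev_spaces_sections_two_nearby_maps} and $S_f := d\pi_h(f)|_{W^{l,p}(M;f_\infty^*TN)}$ for its companion, the idempotency of $d\pi_h(f_\infty)$ gives $T_{f_\infty}=\id$ at once, which is the last assertion of the lemma.

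Two ingredients make this work. The first is the Sobolev module property: for $|l|\leq k$ (under $kp>d$, or $k=d$ and $p=1$) pointwise multiplication renders $W^{l,p}(M;\RR^n)$ a continuous module over the Banach algebra $W^{k,p}(M;\End(\RR^n))$, so that
\[
\|\alpha v\|_{W^{l,p}(M)} \leq C\,\|\alpha\|_{W^{k,p}(M)}\,\|v\|_{W^{l,p}(M)}, \quad C=C(g,h,k,p).
\]
For $0\leq l\leq k$ this is \cite[Corollary 9.7 and Theorem 9.13]{PalaisFoundationGlobal}; for $-k\leq l<0$ I would obtain it by duality from $W^{l,p}=(W^{-l,p'})^*$, which reduces the claim to the mixed-exponent multiplication $W^{k,p}(M;\RR)\cdot W^{-l,p'}(M;\RR)\to W^{-l,p'}(M;\RR)$. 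The second ingredient is the continuity of
\[
W^{k,p}(M;N)\ni f\mapsto d\pi_h(f)\in W^{k,p}(M;\End(\RR^n)),
\]
established in \eqref{eq:Sobolev_smooth_projection}; in particular $\|d\pi_h(f)\|_{W^{k,p}(M)}$ stays bounded and $\|d\pi_h(f_\infty)-d\pi_h(f)\|_{W^{k,p}(M)}\to 0$ as $\|f-f_\infty\|_{W^{k,p}(M)}\to 0$. Both $T_f$ and $S_f$ are then bounded, being restrictions of the multiplication operators $d\pi_h(f_\infty)$ and $d\pi_h(f)$.

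The perturbation estimate is now immediate. For $v\in W^{l,p}(M;f^*TN)$ one has $d\pi_h(f)v=v$, so
\[
S_fT_f v - v = d\pi_h(f)\,d\pi_h(f_\infty)v - d\pi_h(f)v = d\pi_h(f)\bigl(d\pi_h(f_\infty)-d\pi_h(f)\bigr)v,
\]
and two applications of the module inequality give
\[
\|S_fT_f-\id\| \leq C^2\,\|d\pi_h(f)\|_{W^{k,p}(M)}\,\|d\pi_h(f_\infty)-d\pi_h(f)\|_{W^{k,p}(M)};
\]
the symmetric computation, using $d\pi_h(f_\infty)w=w$ for $w\in W^{l,p}(M;f_\infty^*TN)$, bounds $\|T_fS_f-\id\|$ by the same quantity. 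Choosing $\eps$ small enough (governed by the modulus of continuity of \eqref{eq:Sobolev_smooth_projection}) that $\|f-f_\infty\|_{W^{k,p}(M)}<\eps$ forces both operator norms below $1/2$, the operators $S_fT_f$ and $T_fS_f$ are invertible by the Neumann series; hence $T_f$ has the left inverse $(S_fT_f)^{-1}S_f$ and the right inverse $S_f(T_fS_f)^{-1}$, and is therefore an isomorphism of Banach spaces.

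I expect the genuine difficulty to be confined to the module property for negative $l$: after dualizing, the borderline case $k=1$, $l=-1$ requires the estimate $\|\nabla\phi\cdot\psi\|_{L^{p'}(M)}\leq\|\nabla\phi\|_{L^p(M)}\|\psi\|_{L^r(M)}$ with $1/r=1/p'-1/p$, together with the Sobolev embedding $W^{1,p'}(M)\subset L^r(M)$, whose validity reduces to the inequality $p\geq d$ supplied by the standing hypothesis $p>d$. All remaining steps are formal consequences of the module structure, the idempotency of the projections, and the continuity statement \eqref{eq:Sobolev_smooth_projection}.
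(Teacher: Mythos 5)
Your argument is correct, and it shares the paper's two essential ingredients: the realization of $d\pi_h(f_\infty)$ as a pointwise multiplication operator with coefficients in $W^{k,p}(M;\End(\RR^n))$, and the continuous $W^{k,p}(M;\RR)$-module structure of $W^{l,p}(M;\RR)$ for $|l|\leq k$ (Palais' \cite[Corollary 9.7]{PalaisFoundationGlobal} for $0\leq l\leq k$ and a duality argument for $-k\leq l\leq 0$; your check of the borderline case $k=1$, $l=-1$ is consistent with what the paper extracts from \cite[Theorem 9.13]{PalaisFoundationGlobal}). Where you genuinely diverge is in how invertibility is obtained. The paper settles the case $l=k$ by citing Remark \ref{rmk:chart_properties} (the coordinate-chart differential) and then asserts that this isomorphism ``extends'' to all $|l|\leq k$ once boundedness of the extension is known, leaving implicit why the \emph{inverse} also extends boundedly. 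You instead run a perturbation argument uniform in $l$: exploiting the idempotency $d\pi_h(f)v=v$ on sections of $f^*TN$, you show that $S_fT_f$ and $T_fS_f$ are small perturbations of the identity, invert them by Neumann series, and exhibit explicit left and right inverses for $T_f$. This is more self-contained and in fact closes the small logical gap in the paper's concluding sentence. One caveat that your proof shares with the paper's rather than introduces: the smallness of $\|d\pi_h(f_\infty)-d\pi_h(f)\|_{W^{k,p}(M)}$ is governed by the modulus of continuity of the map \eqref{eq:Sobolev_smooth_projection} near $f_\infty$, so the resulting $\eps$ a priori depends on $f_\infty$ (through $\|f_\infty\|_{W^{k,p}(M)}$) and not only on $g,h,k,p$ as the statement claims; if uniformity in $f_\infty$ is needed, both arguments would require an additional compactness or covering step.
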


\begin{proof}
When $l=k$, the conclusion is provided by Remark \ref{rmk:chart_properties}. In general, observe that
\[
\|d\pi_h(f_\infty)(v)\|_{W^{l,p}(M;f_\infty^*TN)}
\leq
\|d\pi_h(f_\infty)\|_{\sL(W^{l,p}(M;f^*TN), W^{l,p}(M;f_\infty^*TN))} \|v\|_{W^{l,p}(M;f^*TN)}.
\]
We recall from Section \ref{subsec:Real_analytic_manifold_structure_Sobolev_space_maps} that the nearest point projection, $\pi_h:\RR^n \supset \sO \to N$, is a $C^\infty$ map on a normal tubular neighborhood of $N \subset \RR^n$ and that $d\pi_h:N\times\RR^n \to TN$ is $C^\infty$ orthogonal projection. In particular, $d\pi_h \in C^\infty(N; \Hom(N\times\RR^n, TN))$, while $f_\infty \in W^{k,p}(M;N)$ and thus $d\pi_h(f_\infty) \in W^{k,p}(M; \Hom(M\times\RR^n, f_\infty^*TN))$ by \cite[Lemma 9.9]{PalaisFoundationGlobal}.

The projection, $d\pi_h(f_\infty) \in W^{k,p}(M; \Hom(M\times\RR^n, f_\infty^*TN))$, acts on $v \in W^{l,p}(M;f^*TN)$ by pointwise inner product with coefficients in $W^{k,p}(M;\RR)$. By \cite[Corollary 9.7]{PalaisFoundationGlobal}, the Sobolev space, $W^{l,p}(M;\RR)$, is a continuous $W^{k,p}(M;\RR)$-module when $0 \leq l \leq k$, while \cite[Theorem 9.13]{PalaisFoundationGlobal}, implies that $W^{l,p'}(M;\RR)$ is a continuous $W^{k,p}(M;\RR)$-module when $-k \leq l \leq 0$ and $p' = p/(p-1) \in (1,\infty]$ is the dual H\"older exponent.

Moreover, if $f_1 \in W^{k,p}(M;\RR)$ and $\alpha \in W^{-k,p}(M;\RR) = (W^{k,p'}(M;\RR))^*$ and noting that $f_1\alpha \in (W^{k,p'}(M;\RR))^*$ and $f_1\alpha(f_2) \in \RR$ for $f_2 \in W^{k,p'}(M;\RR)$, then
\begin{align*}
\|f_1\alpha\|_{W^{-k,p}(M)} &= \sup_{f_2 \in W^{k,p'}(M;\RR)\less\{0\}} \frac{|f_1\alpha(f_2)|}{\|f_2\|_{W^{k,p'}(M)}}
\\
&\leq \|f_1\|_{C(M)}\sup_{f_2 \in W^{k,p'}(M;\RR)\less\{0\}} \frac{|(\alpha(f_2)|}{\|f_2\|_{W^{k,p'}(M)}}
\\
&\leq C\|f_1\|_{W^{k,p}(M)}\sup_{f_2 \in W^{k,p'}(M;\RR)\less\{0\}} \frac{\|\alpha\|_{(W^{k,p'}(M;\RR))^*}\|f_2\|_{W^{k,p'}(M)} }{\|f_2\|_{W^{k,p'}(M)}}
\\
&= C\|f_1\|_{W^{k,p}(M)} \|\alpha\|_{W^{-k,p}(M)} ,
\end{align*}
where $C = C(g,h,k,p) \in [1,\infty)$ is the norm of the continuous Sobolev embedding, $W^{k,p}(M;\RR) \subset C(M;\RR)$. Hence, $W^{l,p}(M;\RR)$ is also a continuous $W^{k,p}(M;\RR)$-module when $-k \leq l \leq 0$ and thus for all $l\in\ZZ$ such that $|l| \leq k$.

Consequently, the isomorphism,
\[
W^{k,p}(M;f^*TN) \ni v \mapsto d\pi_h(f_\infty)(v) \in W^{k,p}(M;f_\infty^*TN)
\]
extends to an isomorphism \eqref{eq:Isomorphism_Sobolev_spaces_sections_two_nearby_maps}, as claimed.
\end{proof}

Arguing as in the proof of Lemma \ref{lem:Euler-Lagrange_equation_and_gradient_harmonic_map_energy_functional} yields

\begin{lem}[Gradient of the harmonic map energy functional with respect to the $L^2$ metric defined by a fixed map]
\label{lem:Gradient_harmonic_map_energy_functional_fixed_map}
Assume the hypotheses of Lemma \ref{lem:Isomorphism_Sobolev_spaces_sections_two_nearby_maps}. Then the gradient of $\sE \circ \Phi_{f_\infty}$ at $u \in \sU_{f_\infty} \subset W^{k,p}(M;f_\infty^*TN)$ with respect to the inner product on $L^2(M;f_\infty^*TN)$,
\[
(\sE \circ \Phi_{f_\infty})'(u) = (u, \sM_{f_\infty}(f))_{L^2(M)}
\]
where $f = \Phi_{f_\infty}(u) = \pi_h(f_\infty+u) \in W^{k,p}(M;N)$, is given by
\begin{align*}
\sM_{f_\infty}(f) &:= d\pi_h(f_\infty)d\pi_h(f)\Delta_g f
\\
&\,= d\pi_h(f_\infty)\sM(f)
\\
&\,= d\pi_h(f_\infty)(\Delta_g f - A_h(f)(df,df)) \in W^{k-2,p}(M;f_\infty^*TN),
\end{align*}
and $\sM_{f_\infty}(f) = \sM(f_\infty)$ at $f=f_\infty$.
\end{lem}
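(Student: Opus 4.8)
The plan is to compute $(\sE\circ\Phi_{f_\infty})'(u)$ by the chain rule and then transport the resulting $L^2(M;f^*TN)$ pairing supplied by Lemma \ref{lem:Euler-Lagrange_equation_and_gradient_harmonic_map_energy_functional} back to an $L^2(M;f_\infty^*TN)$ pairing, using the fibrewise self-adjointness of the orthogonal projections $d\pi_h(f)$ and $d\pi_h(f_\infty)$. First I would record the differential of the chart map. Since $\Phi_{f_\infty}(u) = \pi_h(f_\infty + u)$ and $f := \Phi_{f_\infty}(u)$, differentiating in a direction $w \in W^{k,p}(M;f_\infty^*TN)$ gives $(d\Phi_{f_\infty})(u)w = d\pi_h(f_\infty + u)w$; by \eqref{eq:Differential_nearest_point_projection_as_orthogonal_projection} the operator $d\pi_h(f_\infty+u)$ is fibrewise orthogonal projection $\RR^n \to T_fN$, so $(d\Phi_{f_\infty})(u)w = d\pi_h(f)w \in W^{k,p}(M;f^*TN)$. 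Composing with $\sE'(f)$ and invoking Lemma \ref{lem:Euler-Lagrange_equation_and_gradient_harmonic_map_energy_functional}, which gives $\sE'(f)(v) = (v,\sM(f))_{L^2(M;f^*TN)}$ with $\sM(f) = d\pi_h(f)\Delta_g f = \Delta_g f - A_h(f)(df,df)$ by \eqref{eq:Gradient_harmonic_map_operator}, yields
\[
(\sE\circ\Phi_{f_\infty})'(u)w = (d\pi_h(f)w, \sM(f))_{L^2(M)}.
\]

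Next I would push both projections onto the second slot. Pointwise, $d\pi_h(f)$ and $d\pi_h(f_\infty)$ are self-adjoint endomorphisms of $\RR^n$ (being orthogonal projections), and $w$ is fixed by $d\pi_h(f_\infty)$ since $w(x)\in T_{f_\infty(x)}N$. Hence, almost everywhere on $M$,
\[
\langle d\pi_h(f)w, \sM(f)\rangle_{\RR^n} = \langle w, \sM(f)\rangle_{\RR^n} = \langle d\pi_h(f_\infty)w, \sM(f)\rangle_{\RR^n} = \langle w, d\pi_h(f_\infty)\sM(f)\rangle_{\RR^n},
\]
where the first equality uses $d\pi_h(f)\sM(f) = \sM(f)$, as $\sM(f)$ is already a section of $f^*TN$. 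Integrating over $M$ identifies the $L^2(M;f_\infty^*TN)$-gradient as $\sM_{f_\infty}(f) = d\pi_h(f_\infty)\sM(f)$, and rewriting $\sM(f)$ via \eqref{eq:Gradient_harmonic_map_operator} reproduces the three displayed expressions. Setting $f = f_\infty$, that is $u=0$, gives $\sM_{f_\infty}(f_\infty) = d\pi_h(f_\infty)\sM(f_\infty) = \sM(f_\infty)$, since $\sM(f_\infty)$ lies in $f_\infty^*TN$ and is therefore fixed by $d\pi_h(f_\infty)$.

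The only point requiring genuine care — the \emph{main obstacle} — is the regularity bookkeeping: I must confirm that $\sM_{f_\infty}(f) = d\pi_h(f_\infty)\sM(f)$ really lands in $W^{k-2,p}(M;f_\infty^*TN)$ and not merely in $W^{k-2,p}(M;\RR^n)$. Here $\sM(f)\in W^{k-2,p}(M;f^*TN)$ by Lemma \ref{lem:Gradient_harmonic_map_operator_smooth}, and applying $d\pi_h(f_\infty)$ is precisely the map \eqref{eq:Isomorphism_Sobolev_spaces_sections_two_nearby_maps} with $l = k-2$. Since $|k-2|\le k$ for every $k\ge 1$, Lemma \ref{lem:Isomorphism_Sobolev_spaces_sections_two_nearby_maps} guarantees, for $f$ within the $\eps$-ball of $f_\infty$ (which we may arrange by shrinking $\sU_{f_\infty}$ if necessary), that this is a Banach-space isomorphism onto $W^{k-2,p}(M;f_\infty^*TN)$; in particular the asserted membership and the continuity of $\sM_{f_\infty}$ follow. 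All remaining manipulations are the elementary pointwise projection identities above, so no further analytic input is needed.
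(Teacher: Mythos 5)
Your proof is correct and follows essentially the same route as the paper's: the chain rule for $\sE\circ\Phi_{f_\infty}$, then transporting the orthogonal projections across the $L^2$ pairing by fibrewise self-adjointness, with the target-space bookkeeping supplied by Lemma \ref{lem:Isomorphism_Sobolev_spaces_sections_two_nearby_maps}. If anything you are more careful than the paper, whose proof writes the chart differential as $d\pi_h(f_\infty)$ where you (correctly) write $d\pi_h(f_\infty+u)=d\pi_h(f)$; since the test direction lies in $f_\infty^*TN$ and $\sM(f)$ lies in $f^*TN$, both versions collapse to the same pairing $(w,\sM(f))_{L^2(M)}$ and hence to the same gradient $d\pi_h(f_\infty)\sM(f)$.
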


\begin{proof}
Using the Chain Rule, we calculate
\begin{align*}
(\sE\circ\Phi_{f_\infty})'(u) &= \sE'(\Phi_{f_\infty}(u)) d\Phi_{f_\infty}(u)
\\
&= \sE'(f) d\pi_h(f_\infty)(u)
\\
&= (d\pi_h(f_\infty)(u), \sM(f))_{L^2(M)}
\\
&= (u, d\pi_h(f_\infty)\sM(f))_{L^2(M)},
\end{align*}
noting that the pointwise orthogonal projection, $d\pi_h(f_\infty) \in \End(\RR^n)$, is self-adjoint. Since $\sM(f) = d\pi_h(f)\Delta_g f$ by Lemma \ref{lem:Euler-Lagrange_equation_and_gradient_harmonic_map_energy_functional}, this yields the claimed formula for $\sM_{f_\infty}(f)$.
\end{proof}

We are now ready to complete the

\begin{proof}[Proof of Theorem \ref{mainthm:Lojasiewicz-Simon_gradient_inequality_energy_functional_Riemannian_manifolds}]
By Remark~\ref{rmk:chart_properties}, there is a constant $C_4 = C_4(f,g,h,k,p) \in [1,\infty)$ such that for every $u\in \sU_{f_\infty} \subset W^{k,p}(M;f_\infty^*TN)$ and $f = \Phi_{f_\infty}(u) = \pi_h(f_\infty+u) \in W^{k,p}(M; N)$, we have
\begin{equation}
\label{eq:Kwon_4-7_f_infty}
C_4^{-1}\|f - f_\infty\|_{W^{k,p}(M;\RR^n)} \leq \|u\|_{W^{k,p}(M;f_\infty^*TN)}
\leq C_4 \|f - f_\infty\|_{W^{k,p}(M;\RR^n)}.
\end{equation}
We shall first derive a {\L}ojasiewicz--Simon gradient inequality for the function,
\[
\sE\circ\Phi_{f_\infty}: W^{k,p}(M;f_\infty^*TN) \supset \sU_{f_\infty} \to \RR,
\]
with gradient operator,
\[
\sM_{f_\infty}\circ\Phi_{f_\infty}:W^{k,p}(M;f_\infty^*TN) \supset \sU_{f_\infty} \to W^{k-2,p}(M;f_\infty^*TN).
\]
Note that the proof of Lemma \ref{lem:Isomorphism_Sobolev_spaces_sections_two_nearby_maps} implies that
\[
d\pi_h(f_\infty):W^{k-2,p}(M;\RR^n) \to W^{k-2,p}(M;f_\infty^*TN)
\]
is a bounded, linear operator. Lemmas \ref{lem:Continuous_embedding_Wk-2p_into_Wkpdual}, Proposition \ref{prop:fredharmd}, and --- since $(N,h)$ is real analytic --- Proposition~\ref{prop:Gradient_harmonic_map_operator_analyticity} ensure that the hypotheses of Theorem~\ref{mainthm:Lojasiewicz-Simon_gradient_inequality}  are fulfilled by choosing $x_\infty := f_\infty$ and
\[
\sX := W^{k,p}(M;f_\infty^*TN) \subset \tilde\sX := W^{k-2,p}(M;f_\infty^*TN) \subset \sX^* = W^{-k,p'}(M;f_\infty^*TN),
\]
noting that $\Phi_{f_\infty}(0) = f_\infty$, so $\sE\circ\Phi_{f_\infty}$ has a critical point at the origin in $W^{k,p}(M;f_\infty^*TN)$. Hence, there exist constants $\theta\in [1/2,1)$, and $\sigma_0 \in (0,\delta]$, and $Z_0 \in (0,\infty)$ (where $\delta \in (0,1]$ is the constant in \eqref{eq:Ball_in_Wkp_Vf} that defines the open neighborhood $\sU_{f_\infty}$ of the origin) such that for every $u \in W^{k,p}(M;f_\infty^*TN)$ obeying $\|u\|_{W^{k,p}(M;f_\infty^*TN)} < \sigma_0$ we have
\[
|(\sE\circ\Phi_{f_\infty})(u) - (\sE\circ\Phi_{f_\infty})(0)|^\theta
\leq
Z_0\|\sM_{f_\infty}\circ\Phi_{f_\infty}(u)\|_{W^{k-2,p}(M;f_\infty^*TN)}.
\]
If $f =\Phi_{f_\infty}(u) \in W^{k,p}(M; N)$ obeys $\|f_\infty-f\|_{W^{k,p}(M;\RR^n)}< C_4^{-1}\sigma_0$, then inequality \eqref{eq:Kwon_4-7_f_infty} implies that $\|u\|_{W^{k,p}(M;f_\infty^*TN)} < \sigma_0$. Moreover,
\[
(\sM_{f_\infty}\circ\Phi_{f_\infty})(u) = d\pi_h(f_\infty)\circ\sM(\Phi_{f_\infty}(u)) = d\pi_h(f_\infty)\circ\sM(f),
\]
by Lemma \ref{lem:Gradient_harmonic_map_energy_functional_fixed_map} and Lemma \ref{lem:Isomorphism_Sobolev_spaces_sections_two_nearby_maps} implies that
\[
\|d\pi_h(f_\infty)\circ\sM(f)\|_{W^{k-2,p}(M;f_\infty^*TN)}
\leq
C\|\sM(f)\|_{W^{k-2,p}(M;f^*TN)},
\]
for a constant $C = C(f_\infty,g,h,k,p) \in [1,\infty)$. Therefore,
\[
|\sE(f) - \sE(f_\infty)|^\theta
\leq
CZ\|\sM(f)\|_{W^{k-2,p}(M;f^*TN)}.
\]
This yields inequality \eqref{eq:Lojasiewicz-Simon_gradient_inequality_harmonic_map_energy_functional_Riemannian_manifold} for constants $Z = CZ_0$ and $\sigma = C_4^{-1}\sigma_0$ and concludes the proof of Theorem~\ref{mainthm:Lojasiewicz-Simon_gradient_inequality_energy_functional_Riemannian_manifolds}.

The proof that the optimal {\L}ojasiewicz--Simon gradient inequality \eqref{eq:Lojasiewicz-Simon_gradient_inequality_harmonic_map_energy_functional_Riemannian_manifold} holds with $\theta=1/2$ under the condition \eqref{eq:Lojasiewicz-Simon_gradient_inequality_harmonic_map_neighborhood_Riemannian_manifold}
now follows \mutatis the proof of the inequality with $\theta\in[1/2,1)$ in the real analytic case with the aid of Theorem \ref{mainthm:Optimal_Lojasiewicz-Simon_gradient_inequality_Morse-Bott_energy_functional}. This concludes the proof of Theorem~\ref{mainthm:Lojasiewicz-Simon_gradient_inequality_energy_functional_Riemannian_manifolds}.
\end{proof}

\subsection{Application to the $L^2$ {\L}ojasiewicz--Simon gradient inequality for the harmonic map energy function}
\label{subsec:Application_Lojasiewicz-Simon_gradient_inequality_harmonic_map_energy_functional_L2}

Before proceeding to the proof of Corollary \ref{maincor:Lojasiewicz-Simon_gradient_inequality_energy_functional_Riemannian_manifolds_L2}, we shall need the following two technical lemmas.

\begin{lem}[Continuity of Sobolev multiplication maps]
\label{lem:Continuity_Sobolev_multiplication_maps}
Let $d\geq 2$ and $k \geq 2$ be integers and $p\in [2,\infty)$ be such that $kp > d$. Let $(M,g)$ be a closed, smooth Riemannian manifold of dimension $d$. Then the following Sobolev multiplication maps are continuous:
\begin{align}
\label{eq:Continuity_Sobolev_multiplication_maps_Wkp_times_L2_to_L2}
W^{k,p}(M; \RR) \times L^2(M;\RR) &\to L^2(M;\RR),
\\
\label{eq:Continuity_Sobolev_multiplication_maps_Wk-2p_times_W22_to_L2}
W^{k-2,p}(M; \RR) \times W^{2,2}(M;\RR) &\to L^2(M;\RR),
\\
\label{eq:Continuity_Sobolev_multiplication_maps_Wkp_times_W22_to_L2}
W^{k,p}(M; \RR) \times W^{2,2}(M;\RR) &\to W^{2,2}(M;\RR).
\end{align}
\end{lem}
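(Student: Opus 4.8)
The plan is to reduce each of the three assertions to a bilinear \emph{product estimate} of the form $\|uv\|_{\sZ}\leq C\|u\|_{\sX}\|v\|_{\sY}$, since a bilinear map between Banach spaces is continuous exactly when it is bounded. The universal tool will be the Sobolev Embedding Theorem \cite[Theorem 4.12]{AdamsFournier} on the closed manifold $M$, combined with H\"older's inequality. In particular, the hypothesis $kp>d$ gives a continuous embedding $W^{k,p}(M;\RR)\subset C(M;\RR)\subset L^\infty(M;\RR)$, which immediately settles \eqref{eq:Continuity_Sobolev_multiplication_maps_Wkp_times_L2_to_L2}: for $u\in W^{k,p}$ and $v\in L^2$ one has $\|uv\|_{L^2(M)}\leq \|u\|_{L^\infty(M)}\|v\|_{L^2(M)}\leq C\|u\|_{W^{k,p}(M)}\|v\|_{L^2(M)}$.

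For \eqref{eq:Continuity_Sobolev_multiplication_maps_Wk-2p_times_W22_to_L2} I would choose exponents $q,r\in[2,\infty]$ with $\frac1q+\frac1r=\frac12$ such that the Sobolev embeddings $W^{k-2,p}(M;\RR)\subset L^q(M;\RR)$ and $W^{2,2}(M;\RR)\subset L^r(M;\RR)$ both hold, and then apply H\"older to get $\|uv\|_{L^2}\leq\|u\|_{L^q}\|v\|_{L^r}$. The embedding requirements force $\frac1q\geq\max\!\left(0,\tfrac1p-\tfrac{k-2}{d}\right)$ and $\frac1r\geq\max\!\left(0,\tfrac12-\tfrac2d\right)$, so feasibility of a valid pair amounts to checking that the sum of these two lower bounds is at most $\frac12$. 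When $d\leq 4$ the second bound vanishes and the first is $\leq\frac1p\leq\frac12$, so the pair exists; when $d>4$ the condition collapses to $\frac1p\leq\frac{k}{d}$, i.e. $kp\geq d$, which holds by hypothesis. This is the step I expect to be the main obstacle: in high dimension neither factor embeds into $L^\infty$, and one must verify that the borderline H\"older exponents are admissible precisely because $kp>d$.

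For \eqref{eq:Continuity_Sobolev_multiplication_maps_Wkp_times_W22_to_L2} I would expand $\nabla^j(uv)$ for $j=0,1,2$ via the Leibniz rule; schematically $\nabla^2(uv)=(\nabla^2 u)\,v+2\,\nabla u\otimes\nabla v+u\,\nabla^2 v$, and it suffices to bound each summand (and the lower-order terms $\nabla(uv)$ and $uv$) in $L^2(M;\RR)$. The term $u\,\nabla^2 v$ is controlled by $\|u\|_{L^\infty}\|v\|_{W^{2,2}}$ using $kp>d$; the term $(\nabla^2 u)\,v$ is exactly of the type already estimated in \eqref{eq:Continuity_Sobolev_multiplication_maps_Wk-2p_times_W22_to_L2}, taking $\nabla^2 u\in W^{k-2,p}$ as the first factor; and the cross term $\nabla u\otimes\nabla v$ is handled by the same H\"older-plus-Sobolev scheme with $W^{k-1,p}(M;\RR)\subset L^q$ and $W^{1,2}(M;\RR)\subset L^r$, whose feasibility again reduces to $kp\geq d$. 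The first-order and zeroth-order terms are strictly easier, since $v\in W^{2,2}$ carries more regularity than $\nabla v$. Collecting the bounds yields $\|uv\|_{W^{2,2}}\leq C\|u\|_{W^{k,p}}\|v\|_{W^{2,2}}$, and continuity of all three maps follows by bilinearity.
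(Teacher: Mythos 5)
Your proof is correct, but it takes a genuinely different route from the paper's. The paper proves \eqref{eq:Continuity_Sobolev_multiplication_maps_Wkp_times_L2_to_L2} exactly as you do, via the embedding $W^{k,p}(M;\RR)\subset C(M;\RR)$; but for \eqref{eq:Continuity_Sobolev_multiplication_maps_Wk-2p_times_W22_to_L2} and for the case $d\geq 5$ of \eqref{eq:Continuity_Sobolev_multiplication_maps_Wkp_times_W22_to_L2} it invokes Palais' general multiplication theorems \cite[Theorems 9.5 and 9.6]{PalaisFoundationGlobal}, checking the numerology of the Sobolev numbers $s_i=(d/p_i)-k_i$, and for \eqref{eq:Continuity_Sobolev_multiplication_maps_Wkp_times_W22_to_L2} it treats $d=2,3$ by the Banach algebra property of $W^{2,2}(M;\RR)$ and $d=4$ by the same Leibniz-rule decomposition you use, reduced to auxiliary multiplication maps that are again fed into Palais' theorems. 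Your argument replaces all of this by direct H\"older-plus-Sobolev product estimates applied uniformly in every dimension, which is more elementary and self-contained. What the paper's route buys is that Palais' theorems absorb the borderline bookkeeping --- the configurations where a Sobolev number vanishes and the critical embedding into $L^\infty$ fails --- whereas in your scheme this is exactly the point you flag as the main obstacle: when the two lower bounds for $1/q$ and $1/r$ sum to exactly $\frac{1}{2}$ and one factor sits at a borderline exponent (for instance $W^{2,2}(M)$ with $d=4$, or $(k-2)p=d$), you must use the strict inequality $kp>d$ to create slack or move the exponents off their critical values. This does work in every case --- for $d>4$ the sum of the lower bounds equals $\frac{1}{2}-\frac{kp-d}{dp}<\frac{1}{2}$ strictly, and for $d\leq 4$ the hypothesis $kp>d$ excludes the only doubly borderline configuration $d=4$, $k=2$, $p=2$ --- but these verifications should be written out, since they are precisely where $kp>d$ rather than $kp\geq d$ is needed. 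It is worth noting that the paper itself runs into the same issue in disguise: Palais' \cite[Theorem 9.6]{PalaisFoundationGlobal} omits the case $s_1=s_2=0$, and the authors must fall back on \cite[Theorem 9.5 (2)]{PalaisFoundationGlobal} there, so neither route escapes the borderline analysis entirely.
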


The proof of Lemma \ref{lem:Continuity_Sobolev_multiplication_maps} is quite technical, so we shall provide that in Appendix \ref{sec:Continuity_Sobolev_multiplication_maps}. We have the following analogue of Lemma~\ref{lem:Isomorphism_Sobolev_spaces_sections_two_nearby_maps}.

\begin{lem}[Isomorphism of Sobolev spaces of sections defined by two nearby maps]
\label{lem:L2is}
Let $d\geq 2$, $k\geq 2$ be integers and $p \in [2,\infty)$ a constant such that $kp>d$. Let $(M,g)$ and $(N,h)$ be closed, smooth Riemannian manifolds, with $M$ of dimension $d$ and $(N,h) \subset \RR^n$ a $C^\infty$ isometric embedding. Then there is a constant $\eps = \eps(g,h,k,p) \in (0,1]$ with the following significance. If $f, f_\infty \in W^{k,p}(M;N)$ obey $\|f-f_\infty\|_{W^{k,p}(M)} < \eps$ and $l=0,2$, then
\begin{equation}
\label{eq:Isomorphism_Sobolev_spaces_sections_two_nearby_maps_Wl2}
W^{l,2}(M;f^*TN) \ni v \mapsto d\pi_h(f_\infty)(v) \in W^{l,2}(M;f_\infty^*TN)
\end{equation}
is an isomorphism of Banach spaces that reduces to the identity at $f=f_\infty$.
\end{lem}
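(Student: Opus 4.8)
The plan is to follow \emph{mutatis mutandis} the proof of Lemma \ref{lem:Isomorphism_Sobolev_spaces_sections_two_nearby_maps}, replacing the Palais module results \cite[Corollary 9.7 and Theorem 9.13]{PalaisFoundationGlobal} (which are adapted to the exponent $p$) by the $L^2$-based Sobolev multiplication continuity provided by Lemma \ref{lem:Continuity_Sobolev_multiplication_maps} (which is adapted to the exponent $2$). First I would record that, exactly as in the proof of Lemma \ref{lem:Isomorphism_Sobolev_spaces_sections_two_nearby_maps}, the smoothness statement \eqref{eq:Sobolev_smooth_projection} gives $d\pi_h(f_\infty), d\pi_h(f) \in W^{k,p}(M;\End(\RR^n))$ whenever $f, f_\infty \in W^{k,p}(M;N)$, and that the assignment $W^{k,p}(M;N)\ni f\mapsto d\pi_h(f)\in W^{k,p}(M;\End(\RR^n))$ is continuous. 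Because $d\pi_h(f_\infty)$ acts on a section $v$ by pointwise matrix multiplication with coefficients in $W^{k,p}(M;\RR)$, the Sobolev multiplication maps \eqref{eq:Continuity_Sobolev_multiplication_maps_Wkp_times_L2_to_L2} (for $l=0$) and \eqref{eq:Continuity_Sobolev_multiplication_maps_Wkp_times_W22_to_L2} (for $l=2$) of Lemma \ref{lem:Continuity_Sobolev_multiplication_maps} show that
\[
d\pi_h(f_\infty): W^{l,2}(M;f^*TN) \to W^{l,2}(M;f_\infty^*TN), \quad l=0,2,
\]
is a bounded linear operator, with operator norm controlled by $\|d\pi_h(f_\infty)\|_{W^{k,p}(M;\End(\RR^n))}$; the same estimates apply with the roles of $f$ and $f_\infty$ interchanged, giving a bounded operator $d\pi_h(f): W^{l,2}(M;f_\infty^*TN)\to W^{l,2}(M;f^*TN)$. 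At $f=f_\infty$ the operator reduces to the identity, since $d\pi_h(f_\infty(x))$ restricts to the identity on $T_{f_\infty(x)}N$ for each $x\in M$.

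The heart of the argument is then to promote boundedness to invertibility by a perturbation estimate. For $v\in W^{l,2}(M;f^*TN)$ we have $d\pi_h(f)v=v$, so
\[
\left(d\pi_h(f)\circ d\pi_h(f_\infty) - \id\right) v = d\pi_h(f)\bigl(d\pi_h(f_\infty)-d\pi_h(f)\bigr)v.
\]
Applying the relevant multiplication estimate to the coefficient $d\pi_h(f_\infty)-d\pi_h(f)\in W^{k,p}(M;\End(\RR^n))$, and using the continuity of $f\mapsto d\pi_h(f)$ in the $W^{k,p}$ norm, I would choose $\eps\in(0,1]$ so small that $\|f-f_\infty\|_{W^{k,p}(M)}<\eps$ forces the operator norm of $d\pi_h(f)\circ d\pi_h(f_\infty) - \id$ on $W^{l,2}(M;f^*TN)$ to be strictly less than $1$; the Neumann series then shows this composition is invertible. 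The identical computation with $f$ and $f_\infty$ exchanged shows $d\pi_h(f_\infty)\circ d\pi_h(f)-\id$ is invertible on $W^{l,2}(M;f_\infty^*TN)$. Consequently $d\pi_h(f_\infty)$ admits both a left inverse, $\bigl(d\pi_h(f)\circ d\pi_h(f_\infty)\bigr)^{-1}\circ d\pi_h(f)$, and a right inverse, $d\pi_h(f)\circ\bigl(d\pi_h(f_\infty)\circ d\pi_h(f)\bigr)^{-1}$, so it is an isomorphism of Banach spaces that reduces to the identity at $f=f_\infty$, as claimed.

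The main obstacle is entirely contained in the $l=2$ multiplication estimate \eqref{eq:Continuity_Sobolev_multiplication_maps_Wkp_times_W22_to_L2}, namely that multiplication by a $W^{k,p}$ coefficient preserves $W^{2,2}$: this requires controlling products of up to two derivatives of $v$ against derivatives of the coefficient in $L^2$, and is precisely the content of Lemma \ref{lem:Continuity_Sobolev_multiplication_maps}, whose proof is deferred to the appendix. Granting that lemma, the only remaining care is to check that a single $\eps$, independent of $v$, makes both Neumann series converge simultaneously for $l=0$ and $l=2$; this is immediate from the uniform operator-norm bounds above, since $\|d\pi_h(f_\infty)-d\pi_h(f)\|_{W^{k,p}(M)}\to 0$ as $\|f-f_\infty\|_{W^{k,p}(M)}\to 0$.
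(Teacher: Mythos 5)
Your proposal is correct and follows essentially the same route as the paper, whose proof of this lemma is simply to adapt the proof of Lemma \ref{lem:Isomorphism_Sobolev_spaces_sections_two_nearby_maps} \emph{mutatis mutandis}, invoking the continuous $W^{k,p}(M;\RR)$-module structures on $L^2(M;\RR)$ and $W^{2,2}(M;\RR)$ supplied by the multiplication maps \eqref{eq:Continuity_Sobolev_multiplication_maps_Wkp_times_L2_to_L2} and \eqref{eq:Continuity_Sobolev_multiplication_maps_Wkp_times_W22_to_L2} of Lemma \ref{lem:Continuity_Sobolev_multiplication_maps}. Your explicit Neumann-series argument, using $d\pi_h(f)v=v$ to exhibit left and right inverses, is a cleaner and more complete treatment of the invertibility step than the paper's brief ``extends to an isomorphism,'' but it is the same underlying perturbation idea as in Remark \ref{rmk:chart_properties}.
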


\begin{proof}
We adapt \mutatis the proof of Lemma~\ref{lem:Isomorphism_Sobolev_spaces_sections_two_nearby_maps}, using the fact that $W^{2,2}(M;\RR)$ and $L^2(M;\RR)$ are continuous $W^{k,p}(M;\RR)$-modules by Lemma~\ref{lem:Continuity_Sobolev_multiplication_maps}, using the continuous Sobolev multiplication maps \eqref{eq:Continuity_Sobolev_multiplication_maps_Wkp_times_L2_to_L2} and \eqref{eq:Continuity_Sobolev_multiplication_maps_Wkp_times_W22_to_L2}.
\end{proof}

We can now proceed to the

\begin{proof}[Proof of Corollary \ref{maincor:Lojasiewicz-Simon_gradient_inequality_energy_functional_Riemannian_manifolds_L2}]
Consider Item \eqref{item:maincor_LS_grad_ineq_L2_d=2_k=1}. For $p \in (2,\infty)$, let $p' := p/(p-1) \in (1,2)$. Then \cite[Theorem 4.12]{AdamsFournier} implies that $W^{1,p'}(M;\RR) \subset L^2(M;\RR)$ is a continuous Sobolev embedding if $(p')^* = 2p'/(2-p') = 2p/(2(p-1)-p) \geq 2$, a condition that holds for all $p\in(1,\infty)$ since it is equivalent to $p \geq 2(p-1)-p = p-2$ or $0 \geq -2$. Since $kp>d$ by hypothesis and $d=2$ and $k=1$, then we must restrict $p$ to the range $2<p<\infty$. By density and duality, then $L^2(M;\RR) \subset W^{-1,p}(M;\RR)$ is a continuous Sobolev embedding. But inequality \eqref{eq:Lojasiewicz-Simon_gradient_inequality_harmonic_map_energy_functional_Riemannian_manifold} from Theorem \ref{mainthm:Lojasiewicz-Simon_gradient_inequality_energy_functional_Riemannian_manifolds} (with $d=2$, $k=1$, and $2<p<\infty$ yields
\[
\|\sM(f)\|_{W^{-1,p}(M;f^*TN)}
\geq
Z|\sE(f) - \sE(f_\infty)|^\theta,
\]
while, applying \eqref{eq:Lojasiewicz-Simon_gradient_inequality_harmonic_map_neighborhood_Riemannian_manifold} and Lemma \ref{lem:Isomorphism_Sobolev_spaces_sections_two_nearby_maps} to give equivalences of the norms on $W^{-1,p}(M;f^*TN)$ and $W^{-1,p}(M;f_\infty^*TN)$ and on $L^2(M;f^*TN)$ and $L^2(M;f_\infty^*TN)$,
\begin{align*}
\|\sM(f)\|_{W^{-1,p}(M;f^*TN)} &\leq C\|\sM(f)\|_{W^{-1,p}(M;f_\infty^*TN)}
\\
&\leq C\|\sM(f)\|_{L^2(M;f_\infty^*TN)} \quad\text{(by continuity of $L^2(M;\RR) \subset W^{-1,p}(M;\RR)$)}
\\
&\leq C\|\sM(f)\|_{L^2(M;f^*TN)},
\end{align*}
for $C=C(g,h,p,f_\infty) \in [1,\infty)$. Combining these inequalities yields Item \eqref{item:maincor_LS_grad_ineq_L2_d=2_k=1}.

Consider Item \eqref{item:maincor_LS_grad_ineq_L2_d=3_k=1}. For $p \in (3,\infty)$, let $p' := p/(p-1) \in (1,3/2)$. Then \cite[Theorem 4.12]{AdamsFournier} implies that $W^{1,p'}(M;\RR) \subset L^2(M;\RR)$ is a continuous Sobolev embedding if $(p')^* = 3p'/(3-p') = 3p/(3(p-1)-p) \geq 2$, a condition that is equivalent to $3p \geq 6(p-1)-2p = 4p-6$ or $p \leq 6$. Since $kp>d$ by hypothesis and $d=3$ and $k=1$, then we must restrict $p$ to the range  $3<p\leq 6$. The remainder of the argument for Item \eqref{item:maincor_LS_grad_ineq_L2_d=2_k=1} now applies unchanged to give Item \eqref{item:maincor_LS_grad_ineq_L2_d=3_k=1}.

Consider Item \eqref{item:maincor_LS_grad_ineq_L2_dgeq2_kgeq2}. We shall apply Theorem~\ref{mainthm:Lojasiewicz-Simon_gradient_inequality2} with the choices of Banach and Hilbert spaces,
\begin{multline*}
\sX := W^{k,p}(M;f_\infty^*TN),\quad \tilde\sX := W^{k-2,p}(M;f_\infty^*TN),
\\
\sG := W^{2,2}(M;f_\infty^*TN) \quad\text{and}\quad \tilde\sG := L^2(M; f_\infty^*TN).
\end{multline*}
Proposition \ref{prop:Banach_manifold} assures us that $\Phi_{f_\infty} = \pi_h(f_\infty+\cdot)$ is a $C^\infty$ (real analytic) inverse coordinate chart that gives a diffeomorphism from an open neighborhood of the origin, $\sU_{f_\infty} \subset W^{k,p}(M;f_\infty^*TN)$, onto an open neighborhood of $f_\infty$ in the $C^\infty$ (real analytic) Banach manifold, $W^{k,p}(M;N)$. We thus choose the energy function,
\[
\sE\circ\Phi_{f_\infty}: W^{k,p}(M;f_\infty^*TN) \supset \sU_{f_\infty} \to \RR,
\]
with its gradient map given by Lemma \ref{lem:Gradient_harmonic_map_energy_functional_fixed_map},
\[
\sM_{f_\infty}\circ\Phi_{f_\infty}:W^{k,p}(M;f_\infty^*TN) \supset \sU_{f_\infty} \ni u
\mapsto
\sM_{f_\infty}(\Phi_{f_\infty}(u)) \in W^{k-2,p}(M;f_\infty^*TN),
\]
with gradient $\sM_{f_\infty}(\Phi_{f_\infty}(u))$ related to the differential of $\sE\circ\Phi_{f_\infty}$ at $u \in \sU_{f_\infty}$ by
\[
(\sE\circ\Phi_{f_\infty})'(u)  = (u, \sM_{f_\infty}(\Phi_{f_\infty}(u)))_{L^2(M;f_\infty^*TN)},
\quad\forall\, u \in \sU_{f_\infty},
\]
where, for $f =\Phi_{f_\infty}(u) \in W^{k,p}(M;N)$ and $\sM(f) = d\pi_h(f)\Delta_g f \in W^{k-2,p}(M;f^*TN)$ as in \eqref{eq:Gradient_harmonic_map_operator},
\[
\sM_{f_\infty}(f) = d\pi_h(f_\infty)\sM(f) \in W^{k-2,p}(M;f_\infty^*TN).
\]
We shall need the following generalization of Claim~\ref{claim:Hessian_energy_continuous_function_of_f_in_Wkp_maps}.

\begin{claim}[Continuity of the Hessian of the energy function]
\label{claim:Hessian_energy_continuous_function_of_f_in_Wkp_maps_W22_to_L2operator}
For each $f \in W^{k,p}(M;N)$, the Hessian operator,
\[
\sM'(f) \in \sL\left(W^{k,p}(M;f^*TN), W^{k-2,p}(M;f^*TN)\right),
\]
given by \eqref{eq:harmonichessian}, namely
\[
W^{k,p}(M;f^*TN) \ni v \mapsto \sM'(f)v = d\pi_h(f)\Delta_gv + d^2\pi_h(f)(v,\cdot)^*\Delta_g f \in W^{k-2,p}(M;f^*TN),
\]
extends to a bounded linear operator,
\[
\sM_1(f) \in \sL\left(W^{2,2}(M;f^*TN), L^2(M;f^*TN)\right),
\]
and the following map is continuous,
\begin{equation}
\label{eq:Continuity_M1_Wkp_maps_to_bounded_operators_W22_to_L2}
\sM_1: W^{k,p}(M;N) \ni f \mapsto \sM_1(f) \in \sL\left(W^{2,2}(M;\RR^n), L^2(M;\RR^n)\right).
\end{equation}
\end{claim}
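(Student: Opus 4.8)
The plan is to mirror the proof of Claim \ref{claim:Hessian_energy_continuous_function_of_f_in_Wkp_maps}, replacing the target spaces $W^{k,p}$ and $W^{k-2,p}$ there by $W^{2,2}$ and $L^2$ here, and substituting the three multiplication maps of Lemma \ref{lem:Continuity_Sobolev_multiplication_maps} for the $W^{k,p}$-module properties used previously. First I would record the two embeddings that make the word \emph{extends} meaningful: since $k\geq 2$, $p\geq 2$, and $M$ is compact, one has continuous embeddings $W^{k,p}(M;\RR)\subset W^{2,2}(M;\RR)$ and $W^{k-2,p}(M;\RR)\subset L^2(M;\RR)$. Because the defining formula \eqref{eq:harmonichessian} is pointwise and algebraically identical in both settings, the operator $\sM_1(f)$ obtained by allowing $v$ to range over $W^{2,2}(M;f^*TN)$ genuinely restricts, on the dense subspace $W^{k,p}(M;f^*TN)$, to $\sM'(f)$ followed by the embedding $W^{k-2,p}(M;f^*TN)\subset L^2(M;f^*TN)$.

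For boundedness of $\sM_1(f):W^{2,2}(M;f^*TN)\to L^2(M;f^*TN)$ I would treat the two summands of \eqref{eq:harmonichessian} separately. For the first term, $\Delta_g$ is a bounded operator $W^{2,2}(M;\RR^n)\to L^2(M;\RR^n)$, the coefficient $d\pi_h(f)$ lies in $W^{k,p}(M;\End(\RR^n))$ by \eqref{eq:Sobolev_smooth_projection}, and the continuous multiplication map \eqref{eq:Continuity_Sobolev_multiplication_maps_Wkp_times_L2_to_L2} shows that $v\mapsto d\pi_h(f)\Delta_g v$ is bounded into $L^2$. For the second term I would observe that $d^2\pi_h(f)(v,\cdot)^*\Delta_g f$ is, pointwise, a trilinear algebraic contraction of $d^2\pi_h(f)$, $v$, and $\Delta_g f$; grouping the coefficient $d^2\pi_h(f)\in W^{k,p}$ together with $\Delta_g f\in W^{k-2,p}(M;\RR^n)$ through the continuous $W^{k,p}$-module structure on $W^{k-2,p}$ (available for $k\geq 2$, as used in the proof of Lemma \ref{lem:Gradient_harmonic_map_operator_smooth}) produces a coefficient in $W^{k-2,p}$, after which the multiplication map \eqref{eq:Continuity_Sobolev_multiplication_maps_Wk-2p_times_W22_to_L2} bounds the resulting product against $\|v\|_{W^{2,2}}$.

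Continuity of $f\mapsto\sM_1(f)$ then follows by assembling continuous maps, exactly as in Claim \ref{claim:Hessian_energy_continuous_function_of_f_in_Wkp_maps}, but now valued in $\sL(W^{2,2}(M;\RR^n),L^2(M;\RR^n))$. The assignment $f\mapsto d\pi_h(f)\in W^{k,p}(M;\End(\RR^n))$ is smooth by \eqref{eq:Sobolev_smooth_projection}; composing it with the continuous bilinear map \eqref{eq:Continuity_Sobolev_multiplication_maps_Wkp_times_L2_to_L2} and the fixed bounded operator $\Delta_g$ gives continuity of the first summand. For the second summand, $f\mapsto d^2\pi_h(f)\in W^{k,p}$ is smooth and $f\mapsto\Delta_g f\in W^{k-2,p}(M;\RR^n)$ is bounded linear, so $f\mapsto d^2\pi_h(f)\,\Delta_g f\in W^{k-2,p}$ is continuous; regarding $(a,c)\mapsto[v\mapsto a(v,\cdot)^*c]$ as a bounded bilinear map $W^{k,p}\times W^{k-2,p}\to\sL(W^{2,2}(M;\RR^n),L^2(M;\RR^n))$ via the grouping of the previous paragraph, the conclusion follows by composition.

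I expect the only genuine obstacle to lie in the second summand, which is the sole place where the domain regularity has dropped to $W^{2,2}$ while the coefficient $\Delta_g f$ carries merely $W^{k-2,p}$ regularity. The naive estimate $\|d^2\pi_h(f)(v,\cdot)^*\Delta_g f\|_{L^2}\le\|d^2\pi_h(f)\|_{C(M)}\,\|v\|_{C(M)}\,\|\Delta_g f\|_{L^2}$ is unavailable, since $W^{2,2}(M)\not\subset C(M)$ once $d\geq 4$, so one cannot control $\|v\|_{C(M)}$ by $\|v\|_{W^{2,2}}$. This is precisely what forces the use of the nonstandard multiplication estimate \eqref{eq:Continuity_Sobolev_multiplication_maps_Wk-2p_times_W22_to_L2} of Lemma \ref{lem:Continuity_Sobolev_multiplication_maps}, whose verification is the real technical content and is deferred to Appendix \ref{sec:Continuity_Sobolev_multiplication_maps}; granting that lemma, the present claim reduces to the bookkeeping indicated above.
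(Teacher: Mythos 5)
Your proposal is correct and follows essentially the same route as the paper: the same splitting of \eqref{eq:harmonichessian} into the two summands, the same treatment of the first via \eqref{eq:Continuity_Sobolev_multiplication_maps_Wkp_times_L2_to_L2} and boundedness of $\Delta_g:W^{2,2}\to L^2$, and the same grouping of $d^2\pi_h(f)$ with $\Delta_g f$ into a $W^{k-2,p}$ coefficient acted on via \eqref{eq:Continuity_Sobolev_multiplication_maps_Wk-2p_times_W22_to_L2}. You have also correctly isolated the genuine technical point (the failure of $W^{2,2}(M)\subset C(M)$ for $d\geq 4$, which is exactly what Lemma \ref{lem:Continuity_Sobolev_multiplication_maps} is there to circumvent), so nothing further is needed.
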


\begin{rmk}[Application of Claim \ref{claim:Hessian_energy_continuous_function_of_f_in_Wkp_maps_W22_to_L2operator} to $\sM_{f_\infty}$]
\label{rmk:Hessian_energy_continuous_function_of_u_in_Ufinfty_W22_to_L2operator}
From the definition of $\sM_{f_\infty}$ in Lemma \ref{lem:Gradient_harmonic_map_energy_functional_fixed_map}, we see that Claim \ref{claim:Hessian_energy_continuous_function_of_f_in_Wkp_maps_W22_to_L2operator} and boundedness of the projection operator, $d\pi_h(f_\infty)$, in the forthcoming \eqref{eq:dpi:Wkp_maps_to_bounded_operators_L2} ensures that each Hessian operator,
\[
\sM_{f_\infty}'(u) \in \sL\left(W^{k,p}(M;f_\infty^*TN), W^{k-2,p}(M;f_\infty^*TN)\right),
\quad\text{for } u \in \sU_{f_\infty} \subset W^{k,p}(M;f_\infty^*TN),
\]
extends to a bounded linear operator,
\[
\sM_{f_\infty,1}(u) \in \sL\left(W^{2,2}(M;f_\infty^*TN), L^2(M;f_\infty^*TN)\right),
\]
such that (as required for the application of Theorem~\ref{mainthm:Lojasiewicz-Simon_gradient_inequality2}) the following map is continuous,
\begin{equation}
\label{eq:Continuity_M1_Ufinfty_to_bounded_operators_W22_to_L2}
\sM_{f_\infty,1}: \sU_{f_\infty} \ni u \mapsto \sM_{f_\infty,1}(u) \in \sL\left(W^{2,2}(M;f_\infty^*TN), L^2(M;f_\infty^*TN)\right),
\end{equation}
by virtue of smoothness of the inverse coordinate chart, $\Phi_{f_\infty}$.
\end{rmk}

\begin{proof}[Proof of Claim \ref{claim:Hessian_energy_continuous_function_of_f_in_Wkp_maps_W22_to_L2operator}]
In the proof of Lemma \ref{lem:Gradient_harmonic_map_operator_smooth}, we verified smoothness of the map \eqref{eq:Sobolev_smooth_projection}, namely
\[
W^{k,p}(M;N) \ni f \mapsto d\pi(f) \in W^{k,p}(M;\End(\RR^n)).
\]
According to Lemma~\ref{lem:Continuity_Sobolev_multiplication_maps}, the Sobolev multiplication maps \eqref{eq:Continuity_Sobolev_multiplication_maps_Wkp_times_L2_to_L2} and \eqref{eq:Continuity_Sobolev_multiplication_maps_Wk-2p_times_W22_to_L2} are continuous and thus $L^2(M;\RR^n)$ and $W^{2,2}(M;\RR^n)$ are continuous $W^{k,p}(M;\End(\RR^n))$-modules. In the proof of Claim~\ref{claim:Differential_nearest-point_projection_continuous_function_of_f_in_Wkp_maps}, we showed that
\[
W^{k,p}(M;\End(\RR^n))\subset \sL\left(W^{l,p}(M;\RR^n)\right),
\]
is a continuous embedding for $l=k$ or $k-2$; this proof adapts \mutatis to give a continuous embedding for $l=2$ or $0$,
\[
W^{k,p}(M;\End(\RR^n))\subset \sL\left(W^{l,2}(M;\RR^n)\right).
\]
Hence, the following maps are continuous,
\begin{align}
\label{eq:dpi:Wkp_maps_to_bounded_operators_L2}
W^{k,p}(M;N) \ni f &\mapsto d\pi(f) \in \sL\left(L^2(M;\RR^n)\right),
\\
\label{eq:dpi:Wkp_maps_to_bounded_operators_W22}
W^{k,p}(M;N) \ni f &\mapsto d\pi(f) \in \sL(W^{2,2}(M;\RR^n)).
\end{align}
We have $\Delta_g \in \sL\left(W^{2,2}(M;\RR^n), L^2(M;\RR^n)\right)$ and so the following composition is continuous,
\begin{equation}
\label{eq:dpi_circ_Delta:Wkp_maps_to_bounded_operators_W22_to_L2}
W^{k,p}(M;N) \ni f \mapsto d\pi(f)\circ \Delta_g \in \sL\left(W^{2,2}(M;\RR^n), L^2(M;\RR^n)\right).
\end{equation}
By Claim \ref{claim:Hessian_nearest-point_projection_continuous_function_of_f_in_Wkp_maps}, the following map is smooth,
\[
W^{k,p}(M;N) \ni f \mapsto d^2\pi_h(f)(\cdot,\cdot)^* \in W^{k,p}(M;\Hom(\RR^n; \End(\RR^n)),
\]
and clearly the following linear map is also smooth,
\[
W^{k,p}(M;N) \ni f \mapsto \Delta_gf \in W^{k-2,p}(M;\RR^n).
\]
For $k\geq 2$, the \cite[Corollary 9.7]{PalaisFoundationGlobal} implies that the following multiplication map is continuous,
\[
W^{k,p}(M;\Hom(\RR^n; \End(\RR^n)) \times W^{k-2,p}(M;\RR^n) \to W^{k-2,p}(M; \End(\RR^n)).
\]
Therefore, the following composition is continuous,
\[
W^{k,p}(M;N) \ni f \mapsto d^2\pi_h(f)(\cdot,\cdot)^*\Delta_gf \in W^{k-2,p}(M; \End(\RR^n)).
\]
Using the continuity of the Sobolev multiplication map \eqref{eq:Continuity_Sobolev_multiplication_maps_Wk-2p_times_W22_to_L2} given by Lemma~\ref{lem:Continuity_Sobolev_multiplication_maps}, the verification of continuity of the embedding,
\[
W^{k,p}(M;\End(\RR^n))\subset \sL\left(W^{k-2,p}(M;\RR^n)\right),
\]
in the proof of Claim~\ref{claim:Differential_nearest-point_projection_continuous_function_of_f_in_Wkp_maps} adapts \mutatis to give a continuous embedding,
\[
W^{k-2,p}(M;\End(\RR^n))\subset \sL\left(W^{2,2}(M;\RR^n), L^2(M;\RR^n)\right).
\]
Hence, we see that the following composition is continuous,
\begin{equation}
\label{eq:d2pi_Delta:Wkp_maps_to_bounded_operators_W22_to_L2}
W^{k,p}(M;N) \ni f \mapsto d^2\pi_h(f)(\cdot,\cdot)^*\Delta_gf \in \sL\left(W^{2,2}(M;\RR^n), L^2(M;\RR^n)\right).
\end{equation}
Finally, the continuity of the maps \eqref{eq:dpi_circ_Delta:Wkp_maps_to_bounded_operators_W22_to_L2} and \eqref{eq:d2pi_Delta:Wkp_maps_to_bounded_operators_W22_to_L2} and the expression \eqref{eq:harmonichessian} for $\sM'(f)$ implies that the map,
\[
\sM' : W^{k,p}(M;N) \ni f \mapsto \sM'(f) \in \sL\left(W^{k,p}(M;\RR^n), W^{k-2,p}(M;\RR^n)\right)
\]
extends to give the continuous map \eqref{eq:Continuity_M1_Wkp_maps_to_bounded_operators_W22_to_L2}. This completes the proof of Claim \ref{claim:Hessian_energy_continuous_function_of_f_in_Wkp_maps_W22_to_L2operator}.
\end{proof}

Next we adapt the proof of Proposition \ref{prop:fredharmd} to prove the

\begin{claim}[Fredholm and index zero properties of the extended Hessian operator for the harmonic map energy function]
\label{claim:fredharmd_W22_to_L2}
For every $f\in W^{k,p}(M;N)$, the following operator has index zero,
\[
\sM_1(f) \in \sL\left(W^{2,2}(M; f^*TN), L^2(M; f^*TN)\right).
\]
\end{claim}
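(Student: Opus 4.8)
The plan is to adapt the proof of Proposition~\ref{prop:fredharmd} \mutatis, replacing the target pair of Sobolev spaces $(W^{k,p}, W^{k-2,p})$ throughout by $(W^{2,2}, L^2)$ and using Lemma~\ref{lem:L2is} in place of Lemma~\ref{lem:Isomorphism_Sobolev_spaces_sections_two_nearby_maps}. First I would dispose of the case of a smooth map $\tilde f \in C^\infty(M;N)$. For such $\tilde f$, the bundle $\tilde f^*TN$ is $C^\infty$ and, by the expression~\eqref{eq:harmonichessian_Kwon_thesis}, the operator $\sM_1(\tilde f)$ is given by the same differential expression as $\sM'(\tilde f)$, namely an elliptic, linear, second-order operator with $C^\infty$ coefficients whose principal part is the (diagonal) scalar Laplacian $\Delta_g$ and for which $\sM_1(\tilde f) - \sM_1(\tilde f)^*$ is first-order. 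Hence \cite[Lemma 41.1]{Feehan_yang_mills_gradient_flow_v4} (or \cite[Theorem A.1]{Feehan_Maridakis_Lojasiewicz-Simon_coupled_Yang-Mills_v6}), applied now with the Sobolev pair $W^{2,2}\to L^2$ in place of $C^\infty \to C^\infty$, gives that $\sM_1(\tilde f): W^{2,2}(M;\tilde f^*TN) \to L^2(M;\tilde f^*TN)$ is a Fredholm operator with index zero.

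Next, for an arbitrary $f \in W^{k,p}(M;N)$, I would use the density of $C^\infty(M;N)$ in $W^{k,p}(M;N)$ to cover the latter by $W^{k,p}$-balls centered at smooth maps, and argue near a fixed smooth $\tilde f$. For $f$ sufficiently $W^{k,p}$-close to $\tilde f$, Lemma~\ref{lem:L2is} (applied with $l=0$ and $l=2$) supplies Banach-space isomorphisms
\[
d\pi_h(f): W^{2,2}(M;\tilde f^*TN) \cong W^{2,2}(M;f^*TN)
\quad\text{and}\quad
d\pi_h(\tilde f): L^2(M;f^*TN) \cong L^2(M;\tilde f^*TN).
\]
Since the two flanking maps are isomorphisms, the composition
\[
d\pi_h(\tilde f)\circ\sM_1(f)\circ d\pi_h(f): W^{2,2}(M;\tilde f^*TN) \to L^2(M;\tilde f^*TN)
\]
is Fredholm of index zero if and only if $\sM_1(f)$ is.

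Finally, I would close the argument via continuity together with the openness of the Fredholm property. Because both projections reduce to the identity at $f=\tilde f$ by Lemma~\ref{lem:L2is}, the displayed composition equals $\sM_1(\tilde f)$ there; and by the continuity of $f\mapsto d\pi_h(f)\in\sL(W^{2,2}(M;\RR^n))$ in \eqref{eq:dpi:Wkp_maps_to_bounded_operators_W22}, combined with the continuity of $f\mapsto \sM_1(f)\in\sL(W^{2,2}(M;\RR^n),L^2(M;\RR^n))$ from Claim~\ref{claim:Hessian_energy_continuous_function_of_f_in_Wkp_maps_W22_to_L2operator}, the composition converges to $\sM_1(\tilde f)$ in $\sL(W^{2,2}(M;\tilde f^*TN),L^2(M;\tilde f^*TN))$ as $f\to\tilde f$ in $W^{k,p}$. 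Shrinking the radius if necessary and invoking Theorem~\ref{thm:Openness_set_Fredholm_operators}, the composition, and hence $\sM_1(f)$, is Fredholm of index zero for all $f$ in a $W^{k,p}$-neighborhood of $\tilde f$; as $\tilde f$ ranges over $C^\infty(M;N)$ these neighborhoods cover $W^{k,p}(M;N)$, which completes the proof.

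The one genuinely new point compared with Proposition~\ref{prop:fredharmd} is that the conjugating isomorphisms must be available \emph{simultaneously} in the $L^2$ and $W^{2,2}$ topologies; this is precisely what Lemma~\ref{lem:L2is} furnishes for $l=0,2$, resting in turn on the continuous module structures of $L^2(M;\RR)$ and $W^{2,2}(M;\RR)$ over $W^{k,p}(M;\RR)$ established in Lemma~\ref{lem:Continuity_Sobolev_multiplication_maps}. Likewise, the perturbation estimate feeding the openness argument needs operator-norm continuity in $\sL(W^{2,2},L^2)$ rather than the stronger $W^{k,p}\to W^{k-2,p}$ continuity used before, so the crux is that Claim~\ref{claim:Hessian_energy_continuous_function_of_f_in_Wkp_maps_W22_to_L2operator} delivers exactly this weaker continuity; the index computation in the smooth case is then routine elliptic theory via homotopy invariance of the index and the self-adjointness of the principal symbol $|\xi|^2\,\id$.
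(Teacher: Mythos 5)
Your proposal is correct and follows essentially the same route as the paper's own proof: index zero in the smooth case via \cite[Lemma 41.1]{Feehan_yang_mills_gradient_flow_v4} or \cite[Theorem A.1]{Feehan_Maridakis_Lojasiewicz-Simon_coupled_Yang-Mills_v6}, density of $C^\infty(M;N)$ in $W^{k,p}(M;N)$, conjugation by the isomorphisms of Lemma~\ref{lem:L2is}, the operator-norm continuity from Claim~\ref{claim:Hessian_energy_continuous_function_of_f_in_Wkp_maps_W22_to_L2operator} together with \eqref{eq:dpi:Wkp_maps_to_bounded_operators_L2} and \eqref{eq:dpi:Wkp_maps_to_bounded_operators_W22}, and finally Theorem~\ref{thm:Openness_set_Fredholm_operators}. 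No changes are needed.
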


\begin{proof}[Proof of Claim \ref{claim:fredharmd_W22_to_L2}]
For any $\tilde f\in C^\infty(M;N)$, either \cite[Lemma 41.1]{Feehan_yang_mills_gradient_flow_v4} or \cite[Theorem A.1]{Feehan_Maridakis_Lojasiewicz-Simon_coupled_Yang-Mills_v6} implies that $\Ind \sM_1(\tilde f) = 0$. Moreover, we may approximate any Sobolev map $f\in W^{k,p}(M;N)$ by a smooth map $\tilde f\in C^\infty(M;N)$. Lemma~\ref{lem:L2is} implies that the operators,
\begin{align*}
d\pi_h(f):W^{2,2}(M;\tilde f^*TN) &\cong W^{2,2}(M;f^*TN),
\\
d\pi_h(\tilde f): L^2(M;f^*TN) &\cong L^2(M;\tilde f^*TN),
\end{align*}
are isomorphisms of Banach spaces whenever $f$ is $W^{k,p}(M;N)$-close enough to $\tilde f$. Hence, the composition,
\[
d\pi_h(\tilde f)\circ\sM_1(f)\circ d\pi_h(f): W^{2,2}(M;\tilde f^*TN) \to L^2(M;\tilde f^*TN),
\]
is a Fredholm operator with index zero if and only if
\[
\sM_1(f): W^{2,2}(M;f^*TN) \to L^2(M;f^*TN),
\]
is a Fredholm operator with index zero. But continuity of the maps \eqref{eq:Continuity_M1_Wkp_maps_to_bounded_operators_W22_to_L2}, \eqref{eq:dpi:Wkp_maps_to_bounded_operators_L2}, and \eqref{eq:dpi:Wkp_maps_to_bounded_operators_W22} implies that given $\eps \in (0,1]$, there exists $\delta = \delta(\tilde f, g,h, \eps) \in (0,1]$ with the following significance. If $f\in W^{k,p}(M;N)$ obeys
\[
\|f - \tilde f\|_{W^{k,p}(M;\RR^n)} < \delta,
\]
then
\[
\|d\pi_h(\tilde f)\circ\sM_1(f)\circ d\pi_h(f) - \sM_1(\tilde f)\|_{\sL(W^{2,2}(M;\tilde f^*TN), L^2(M;\tilde f^*TN))} < \eps.
\]
Theorem \ref{thm:Openness_set_Fredholm_operators} now implies that $\sM_1(f)$ is Fredholm with index zero, as desired for $f\in W^{k,p}(M;N)$. This completes the proof of Claim \ref{claim:fredharmd_W22_to_L2}.
\end{proof}

Following Remark \ref{rmk:Hessian_energy_continuous_function_of_u_in_Ufinfty_W22_to_L2operator}, we also have the

\begin{rmk}[Application of Claim \ref{claim:Hessian_energy_continuous_function_of_f_in_Wkp_maps_W22_to_L2operator} to the Hessian operator $\sM_{f_\infty}'$]
\label{rmk:fredharmd_finfty_W22_to_L2}
From the proof of Claim \ref{claim:fredharmd_W22_to_L2} and definition of $\sM_{1,f_\infty}(u)$ in Remark \ref{rmk:Hessian_energy_continuous_function_of_u_in_Ufinfty_W22_to_L2operator}, we also see that every $f_\infty \in W^{k,p}(M;N)$ and $u\in \sU_{f_\infty} \subset W^{k,p}(M;f_\infty^*N)$, the following operator has index zero,
\[
\sM_{1,f_\infty}(u) \in \sL\left(W^{2,2}(M; f_\infty^*TN), L^2(M; f_\infty^*TN)\right),
\]
as required for the application of Theorem~\ref{mainthm:Lojasiewicz-Simon_gradient_inequality2}.
\end{rmk}

The remainder of the proof of Theorem \ref{mainthm:Lojasiewicz-Simon_gradient_inequality_energy_functional_Riemannian_manifolds}
now adapts \mutatis to verify that the hypotheses of Theorem~\ref{mainthm:Lojasiewicz-Simon_gradient_inequality2} and Theorem \ref{mainthm:Optimal_Lojasiewicz-Simon_gradient_inequality_Morse-Bott_energy_functional} are obeyed when $\sE$ is real analytic or Morse--Bott, respectively. This completes the proof of Corollary \ref{mainthm:Lojasiewicz-Simon_gradient_inequality2}.
\end{proof}

\appendix

\section{Continuity of Sobolev embeddings and multiplication maps}
\label{sec:Continuity_Sobolev_multiplication_maps}
In this appendix, we first give the

\begin{proof}[Proof of Lemma \ref{lem:Continuous_embedding_Wk-2p_into_Wkpdual}]
Recall from \cite[Section 3.5--3.14]{AdamsFournier} that $(W^{k,p}(M;\RR))^* = W^{-k,p'}(M;\RR)$, where $p' \in (1,\infty]$ is the dual H\"older exponent defined by $1/p+1/p'=1$, so we must determine sufficient conditions on $k$ and $p$ that ensure continuity of the embedding, $W^{k-2,p}(M;\RR) \subset W^{-k,p'}(M;\RR)$.

Consider the case $k = 1$. Then $W^{-1,p}(M;\RR) \subset W^{-1,p'}(M;\RR)$ is a continuous embedding if and only if $p\geq p'$, that is $p \geq 2$ and the latter condition is assured by our hypothesis that $kp>d$ and $d\geq 2$.

Consider the case $k = 2$. Then $L^p(M;\RR) \subset W^{-2,p'}(M;\RR)$ is a continuous embedding and if $p>1$, it is the dual of a continuous embedding, $W^{2,p}(M;\RR) \subset L^{p'}(M;\RR)$, by \cite[Sections 3.5--3.14]{AdamsFournier}. According to \cite[Theorem 4.12]{AdamsFournier}, there is a continuous Sobolev embedding, $W^{2,p}(M;\RR) \subset C(M;\RR)$, by our hypothesis that $kp>d$, and hence the embedding, $W^{2,p}(M;\RR) \subset L^{p'}(M;\RR)$, is continuous, as required for this case.

Consider the case $k \geq 3$. According to \cite[Theorem 4.12]{AdamsFournier}, there are continuous Sobolev embeddings,
\begin{inparaenum}[\itshape a\upshape)]
\item $W^{k-2,p}(M;\RR) \subset L^{p^*}(M;\RR)$, if $(k-2)p < d$ and $p^* = dp/(d-(k-2)p)$, or
\item $W^{k-2,p}(M;\RR) \subset L^q(M;\RR)$, if $(k-2)p = d$ and $1 \leq q < \infty$, or
\item $W^{k-2,p}(M;\RR) \subset L^\infty(M;\RR)$, if $(k-2)p > d$.
\end{inparaenum}
By our hypothesis that $kp>d$, there is a continuous Sobolev embedding $W^{k,p}(M;\RR) \subset L^r(M;\RR)$ for any $r \in [1,\infty]$ by \cite[Theorem 4.12]{AdamsFournier} and hence, by duality, there is a continuous Sobolev embedding, $L^{r'}(M;\RR) \subset W^{-k,p'}(M;\RR)$ for any $r \in [1,\infty)$.

Consider the subcase $(k-2)p < d$. By choosing $r = p^*$, we obtain a continuous embedding
\[
W^{k-2,p}(M;\RR) \subset L^{p^*}(M;\RR) \subset W^{-k,p'}(M;\RR).
\]
Consider the subcases, $(k-2)p \geq d$. By choosing $q \in (1,\infty)$ and $r' = q$ for $r \in (1,\infty)$, we again obtain a continuous embedding
\[
W^{k-2,p}(M;\RR) \subset L^q(M;\RR) \subset W^{-k,p'}(M;\RR).
\]
This concludes the proof of Lemma \ref{lem:Continuous_embedding_Wk-2p_into_Wkpdual}.
\end{proof}

Next, we provide the

\begin{proof}[Proof of Lemma \ref{lem:Continuity_Sobolev_multiplication_maps}]
Continuity of the multiplication map \eqref{eq:Continuity_Sobolev_multiplication_maps_Wkp_times_L2_to_L2} is an immediate consequence of continuity of the Sobolev embedding, $W^{k,p}(M; \RR) \subset C(M;\RR)$, for $kp>d$ given by \cite[Theorem 4.12]{AdamsFournier}.

For \eqref{eq:Continuity_Sobolev_multiplication_maps_Wk-2p_times_W22_to_L2}, we shall apply Palais' \cite[Theorem 9.6]{PalaisFoundationGlobal} (see Case \ref{case:claim_multi_d_geq_5} in the proof of \eqref{eq:Continuity_Sobolev_multiplication_maps_Wkp_times_W22_to_L2} when $d\geq 5$ below for a detailed review of Palais' hypotheses). We define $s_1 := (d/p)-(k-2) = (d/p)-k+2<2$ and $s_2 := (d/2)-2\geq 0$ and $\sigma := d/2$. Notice that $s_1+s_2  = (d/p)-k+(d/2) < d/2 = \sigma <d$ and that $\sigma > \max\{s_1, s_2\}$, which covers the case $s_1,s_2 <0$. Hence, the hypotheses of \cite[Theorem 9.6]{PalaisFoundationGlobal} are obeyed except when $s_1=s_2=0$; however, the latter case is provided by \cite[Theorem 9.5 (2)]{PalaisFoundationGlobal}. This proves \eqref{eq:Continuity_Sobolev_multiplication_maps_Wk-2p_times_W22_to_L2}.

For \eqref{eq:Continuity_Sobolev_multiplication_maps_Wkp_times_W22_to_L2}, we observe that if $p=2$, then the multiplication map \eqref{eq:Continuity_Sobolev_multiplication_maps_Wkp_times_W22_to_L2} is continuous by \cite[Corollary 9.7]{PalaisFoundationGlobal} for any $d \geq 2$, since $kp>d$ by hypothesis. For $p>2$, we shall separately consider the cases $d=2,3$, $d\geq 5$, and $d=4$.

\setcounter{case}{0}
\begin{case}[$d=2,3$]
Recall that $W^{2,2}(M;\RR)$ is a Banach algebra by \cite[Theorem 4.39]{AdamsFournier} when $1\leq d<4$ and so the multiplication map \eqref{eq:Continuity_Sobolev_multiplication_maps_Wkp_times_W22_to_L2} is continuous for any $p \geq 2$ by continuity of the Sobolev embedding, $W^{k,p}(M; \RR) \subset W^{2,2}(M;\RR)$.
\end{case}

If $p > 2$, then one could appeal in part to \cite[Theorem 9.6]{PalaisFoundationGlobal}, but it is simpler to just verify the result directly. For $f_1 \in W^{k,p}(M; \RR)$ and $f_2 \in W^{2,2}(M;\RR)$, we have
\[
\nabla(f_1f_2) = (\nabla f_1)f_2 + f_1\nabla f_2
\text{ and }
\nabla^2(f_1f_2) = (\nabla^2 f_1)f_2 + 2\nabla f_1\cdot \nabla f_2 + f_1\nabla^2 f_2.
\]
Hence,
\begin{align*}
\|\nabla(f_1f_2)\|_{L^2(\RR)} &\leq \|(\nabla f_1)f_2\|_{L^2(\RR)} + \|f_1\nabla f_2\|_{L^2(\RR)}
\end{align*}

\begin{case}[$d \geq 5$]
\label{case:claim_multi_d_geq_5}
We shall apply \cite[Theorem 9.6]{PalaisFoundationGlobal}, which for $r=2$ asserts that the following multiplication map is continuous,
\[
W^{k_1,p_1}(M; \RR) \times W^{k_2,p_2}(M; \RR) \to W^{l,q}(M;\RR),
\]
provided
\begin{inparaenum}[\itshape a\upshape)]
\item $1 \leq p_1,p_2,q < \infty$; and
\item $k_1, k_2 \geq l$; and
\item $s_1+s_2 < d$, where $s_1,s_2\in\RR$ are defined by $k_i=:(d/p)-s_i$ for $i=1,2$; and
\item for $\sigma$ defined by $l=(d/q)-\sigma$, then
\begin{inparaenum}[\itshape i\upshape)]
\item $\sigma \geq s_1+s_2$ if $s_1,s_2>0$; or
\item $\sigma \geq s_1$ if $s_1>0$ and $s_2\leq 0$, with strict inequality if $s_2=0$; or
\item $\sigma \geq s_2$ if $s_2>0$ and $s_1\leq 0$, with strict inequality if $s_1=0$; or
\item $\sigma \geq \max\{s_1,s_2\}$ if $s_1, s_2 <0$, with strict inequality if $\max\{s_1,s_2\}$ is an integer.
\end{inparaenum}
\end{inparaenum}
We choose $k_1=k$, $p_1=p$ and $k_2 = 2$, $p_2=2$, and $l=2$, $q=2$. We have $s_1 = d/p-k$, so $s_1<0$ by hypothesis, and $s_2 = (d/2)-2$, so $s_2 > 0$, and $s_1+s_2 < (d/2)-2 < d$. We also have $\sigma = (d/2)- 2 = s_2$, so $\sigma > s_2$ for $s_2>0$, as required when $s_1<0$. Hence, the multiplication map \eqref{eq:Continuity_Sobolev_multiplication_maps_Wkp_times_W22_to_L2} is continuous for $d \geq 5$ by \cite[Theorem 9.6]{PalaisFoundationGlobal}.
\end{case}

\begin{case}[$d = 4$]
Palais' \cite[Theorem 9.6]{PalaisFoundationGlobal} does not apply directly to this borderline case since examination of the choices for $d\geq 5$ reveals that we would have $s_1<0$ but $s_2=0=\sigma$ and thus $0 = \sigma \not> \max\{s_1,s_2\}=0$.

Let $f_1\in W^{k,p}(M;\RR)$ and $f_2 \in W^{2,2}(M;\RR)$. It is convenient (although not strictly necessary if we appealed instead to Palais' more general \cite[Theorem 9.5]{PalaisFoundationGlobal}) to separately consider the cases $k - 4/p>2$ and $k - 4/p \leq 2$.

\emph{Assume $k-4/p>2$.}
In this case, we have a continuous Sobolev embedding, $W^{k,p}(M; \RR) \subset C^2(M;\RR)$, by \cite[Theorem 4.12]{AdamsFournier} since $(k-2)p = kp - 2p > 4$, and so
\[
\|f_1f_2\|_{W^{2,2}(M;\RR)} \leq \|f_1\|_{C^2(M;\RR)} \|f_2\|_{W^{2,2}(M;\RR)} \leq C\|f_1\|_{W^{k,p}(M;\RR)} \|f_2\|_{W^{2,2}(M;\RR)},
\]
for a constant $C=C(g,k,p) \in [1,\infty)$. This proves continuity of the multiplication map \eqref{eq:Continuity_Sobolev_multiplication_maps_Wkp_times_W22_to_L2} when $k-4/p>2$.

\emph{Assume $k-4/p \leq 2$.}
To prove continuity of the multiplication map \eqref{eq:Continuity_Sobolev_multiplication_maps_Wkp_times_W22_to_L2}, we must show that
\[
\|f_1f_2\|_{W^{2,2}(M;\RR)}
\leq C\|f_1\|_{W^{k,p}(M;\RR)} \|f_2\|_{W^{2,2}(M;\RR)},
\]
for a constant $C=C(g,k,p) \in [1,\infty)$. Thus, it suffices to show that the $L^2$ norm of each one of the following terms,
\begin{equation}
\label{eq:Terms_nabla_f1f2_and_nabla2_f1f2}
f_1f_2, \ f_1 \nabla f_2, \ f_1 \nabla^2f_2 \quad\text{and}\quad  (\nabla f_1)f_2, \ (\nabla f_1) \nabla f_2, \ (\nabla^2f_1)f_2,
\end{equation}
is bounded by $C\|f_1\|_{W^{k,p}(M;\RR)} \|f_2\|_{W^{2,2}(M;\RR)}$.

By hypothesis of Lemma~\ref{lem:Continuity_Sobolev_multiplication_maps}, we have $kp>d$, so $kp-4>0$. According to \cite[Theorem 4.12]{AdamsFournier}, we thus have a continuous Sobolev embedding, $W^{k,p}(M; \RR) \subset C(M;\RR)$, and so the $L^2$ norms of each member of the first group of products in \eqref{eq:Terms_nabla_f1f2_and_nabla2_f1f2} is bounded by $C\|f\|_{W^{k,p}(M;\RR)}\|g\|_{W^{2,2}(M;\RR)}$, for a constant $C=C(g,k,p) \in [1,\infty)$, as desired.

To bound the $L^2$ norms of each of the products in the second group of terms in \eqref{eq:Terms_nabla_f1f2_and_nabla2_f1f2}, we need continuity of the following Sobolev multiplication maps,
\begin{align}
\label{eq:Continuity_Sobolev_multiplication_maps_d=4_Wk-1p_times_W22_to_L2}
W^{k-1,p}(M;\RR) \times W^{2,2}(M;\RR) &\to L^2(M;\RR),
\\
\label{eq:Continuity_Sobolev_multiplication_maps_d=4_Wk-1p_times_W12_to_L2}
W^{k-1,p}(M;\RR) \times W^{1,2}(M;\RR) &\to L^2(M;\RR),
\\
\label{eq:Continuity_Sobolev_multiplication_maps_d=4_Wk-2p_times_W22_to_L2}
W^{k-2,p}(M;\RR) \times W^{2,2}(M;\RR) &\to L^2(M;\RR).
\end{align}
Continuity of the multiplication map \eqref{eq:Continuity_Sobolev_multiplication_maps_d=4_Wk-1p_times_W22_to_L2} follows from continuity of the multiplication map \eqref{eq:Continuity_Sobolev_multiplication_maps_d=4_Wk-2p_times_W22_to_L2} via continuity of the Sobolev embedding, $W^{k-1,p}(M;\RR) \subset W^{k-2,p}(M;\RR)$.

To prove continuity of \eqref{eq:Continuity_Sobolev_multiplication_maps_d=4_Wk-1p_times_W12_to_L2}, we apply \cite[Theorem 9.6]{PalaisFoundationGlobal} with $s_1 = (d/p)-(k-1) = (4/p)-k+1$, so $s_1 < 1$, and $s_2 = (d/2)-1 = 1 > 0$ and $\sigma = (d/2)-0 = 2$. Notice that $s_1 + s_2 = 4/p-k+2 < 2 = \sigma <4=d$ and that if $s_1 \leq 0$ then we still have $s_2 = 1 < 2 = \sigma$. Hence, the hypotheses of \cite[Theorem 9.6]{PalaisFoundationGlobal} are obeyed and this proves \eqref{eq:Continuity_Sobolev_multiplication_maps_d=4_Wk-1p_times_W12_to_L2}.

To prove continuity of \eqref{eq:Continuity_Sobolev_multiplication_maps_d=4_Wk-2p_times_W22_to_L2}, we apply \cite[Theorem 9.6]{PalaisFoundationGlobal} with $s_1 = d/p-(k-2) = 4/p-k+2 \geq 0$, so $0 \leq s_1 < 2$, and as before, $s_2 = 0$ and $\sigma =2$. Notice that $s_1+s_2 = s_1 <2= \sigma< 4=d$. If $s_1>0$, then the hypotheses of \cite[Theorem 9.6]{PalaisFoundationGlobal} are obeyed and this proves \eqref{eq:Continuity_Sobolev_multiplication_maps_d=4_Wk-2p_times_W22_to_L2} when $s_1>0$.

Palais' \cite[Theorem 9.6]{PalaisFoundationGlobal} does not apply when\footnote{The omission of this case appears to be just an oversight.} $s_1=s_2=0$, but we can apply his more general \cite[Theorem 9.5 (2)]{PalaisFoundationGlobal}, which does include the case $s_1=s_2=0$, using $d=4$, $l=0$, $q=2$ and observing that we obtain a strict inequality, $0 = l < (d/q)-\max\{s_1,s_2\} = 4/2-0 = 2$, as required for this case. Moreover, $k_1 = k-2 \geq l=0$ and $k_2 = 2 \geq l=0$. Hence, the hypotheses of \cite[Theorem 9.5 (2)]{PalaisFoundationGlobal} are obeyed when $s_1=s_2=0$ and this completes the proof of \eqref{eq:Continuity_Sobolev_multiplication_maps_d=4_Wk-2p_times_W22_to_L2}.
\end{case}

This concludes the proof of continuity of the multiplication map \eqref{eq:Continuity_Sobolev_multiplication_maps_Wkp_times_W22_to_L2} and therefore the proof of Lemma~\ref{lem:Continuity_Sobolev_multiplication_maps}.
\end{proof}

%
%

\bibliography{master,mfpde}
\bibliographystyle{amsplain}

\end{document}